\definecolor{gray}{gray}{.75}
\definecolor{gray2}{gray}{.50}
\newcommand{\dom}{\mathcal{D}}
\newcommand{\Z}{\mathbb{Z}}
\newcommand{\N}{\mathbb{N}}
\newcommand{\R}{\mathbb{R}}
\newcommand{\C}{\mathbb{C}}
\newcommand{\A}{\mathrm{\alpha}}
\newcommand{\la}{\mathrm{\lambda}}
\newtheorem{thm}{Theorem}[section]
\newtheorem{cor}[thm]{Corollary}
\newtheorem{remark}[thm]{Remark}
\newtheorem{prop}[thm]{Proposition}
\newtheorem{lemma}[thm]{Lemma}
\newtheorem{defn}[thm]{Definition}
\numberwithin{equation}{section}
\begin{document}

\title[Analytic Torsion]
{Analytic Torsion of a Bounded Generalized Cone}
\author{Boris Vertman}
\address{University of Bonn \\ Department of Mathematics \\
Beringstr. 6\\ 53115 Bonn\\ Germany}
\email{vertman@math.uni-bonn.de}

\thanks{2000 Mathematics Subject Classification.
Primary: 58J52. Secondary: 35P05.}

\begin{abstract}
{We compute the analytic torsion of a bounded generalized cone by generalizing the computational methods of M. Spreafico and using the symmetry in the de Rham complex, as established by M. Lesch. We evaluate our result in lower dimensions and further provide a separate computation of analytic torsion of a bounded generalized cone over $S^1$, since the standard cone over the sphere is simply a flat disc.}
\end{abstract}

\maketitle

\pagestyle{myheadings}
\markboth{\textsc{Analytic Torsion}}{\textsc{Boris Vertman}} 

%******************************************************************
\section{Introduction}\
%******************************************************************
\\[-3mm] Torsion invariants for manifolds which are not simply connected were introduced by K. Reidemeister in [Re1, Re2] and generalized to higher dimensions by W. Franz in [Fr]. Using the introduced torsion invariants the authors obtained a full PL-classification of lens spaces. The Reidemeister-Franz torsion, short $-$ the Reidemeister torsion, was the first invariant of manifolds which was not a homotopy invariant.
\\[3mm] The Reidemeister-Franz definition of torsion invariants was extended later to smooth manifolds by J. H. Whitehead in [Wh] and G. de Rham in [Rh]. With their construction G. de Rham further proved that a spherical Clifford-Klein manifold is determined up to isometry by its fundamental group and its Reidemeister torsion. 
\\[3mm] The Reidemeister-Franz torsion is a combinatorial invariant and can be constructed using a cell-decomposition or a triangulation of the underlying manifold. The combinatorial invariance under subdivisions was established by J. Milnor in [Mi], see also [RS]. It is therefore a topological invariant of $M$, however not a homotopy invariant.
\\[3mm] There is a series of results relating combinatorial and analytic objects, among them the Atiyah-Singer Index Theorem. In view of these results it is natural to ask for the analytic counterpart of the combinatorial Reidemeister torsion. Such an analytic torsion was introduced by D. B. Ray and I. M. Singer in [RS] in form of a weighted product of zeta-regularized determinants of Laplace operators on differential forms.
\\[3mm] The zeta-regularized determinant of a Laplace Operator is a spectral invariant which very quickly became an object of interest on its own in differential and conformal geometry, studied in particular as a function of metrics for appropriate geometric operators. Further it plays a role in mathematical physics where it gives a regularization procedure of functional path integrals (partition function), see [H]. 
\\[3mm] In their work D.B. Ray and I. M. Singer provided some motivation why the analytic torsion should equal the combinatorial invariant. The celebrated Cheeger-M\"{u}ller Theorem, established independently by J. Cheeger in [Ch] and W. M\"{u}ller in [Mu1], proved equality between the analytic Ray-Singer torsion and the combinatorial Reidemeister torsion for any smooth closed manifold with an orthogonal representation of its fundamental group.
\\[3mm] The proofs of J. Cheeger and W. M\"{u}ller use different approaches. The first author in principle studied the behaviour of the Ray-Singer torsion under surgery. The second author used combinatorial parametrices and approximation theory of Dodziuk [Do] to reduce the problem to trivial representations, treating this problem then by surgeries.
\\[3mm] Note a different approach of Burghelea-Friedlander-Kappeler in [BFK] and Bismut-Zhang in [BZ1], who obtained a new proof of the result by J. Cheeger and W. M\"{u}ller using Witten deformation of the de Rham complex via a Morse function.
\\[3mm] The study of the analytic torsion of Ray and Singer has taken the following natural steps. The setup of a closed Riemannian manifold with its marking point $-$ the Cheeger M\"{u}ller Theorem, was followed by the discussion of compact manifolds with smooth boundary. In the context of smooth compact manifolds with boundary a Cheeger-M\"{u}ller type result was established in the work of W. L\"{u}ck [L\"{u}] and S. Vishik [V]. 
\\[3mm] While the first author reduced the discussion to the known Cheeger-M\"{u}ller Theorem on closed manifolds via the closed double construction, the second author gave an independent proof of the Cheeger-M\"{u}ller Theorem on smooth compact manifolds with and without boundary by establishing gluing property of the analytic torsion. 
\\[3mm] Both proofs work under the assumption of product metric structures near the boundary. However by the anomaly formula in [BM] the assumption of product metric structures can be relaxed. 
\\[3mm] The next natural step in the study of analytic torsion is the treatment of Riemannian manifolds with singularities. We are interested in the simplest case, the conical singularity. The analysis and the geometry of spaces with conical singularities were developped in the classical works of J. Cheeger in [Ch1] and [Ch2]. This setup is modelled by a bounded generalized cone $M=(0,1]\times N$ over a closed Riemannian manifold $(N,g^N)$ with the Riemannian metric
$$g^M=dx^2 \oplus x^2g^N.$$
The analytic Ray-Singer Torsion is shown in [Dar] to exist on a bounded generalized cone and the natural question arises whether one can establish a Cheeger-M\"uller type theorem in the singular setup, as well. Following [L, Problem 5.3], the idea is to reduce via the gluing formula of Vishik [V] the comparison of Ray-Singer and $\bar{p}$-Reidemeister torsion (intersection torsion, cf. [Dar]) on compact manifolds with conical singularities to a comparison on a bounded generalized cone. 
\\[3mm] The presented computation of analytic torsion on a bounded generalized cone solves problem posed in [L, Problem 5.3]. We have provided the general answer to the question in Theorems \ref{final-odd} and \ref{final-even} and obtained as an example explicit results in two and in three dimensions in Corollaries \ref{two-dim} and \ref{three-dim}. 
\\[3mm] After the computation of the analytic torsion of a bounded generalized cone one faces the problem of comparing it to the intersection torsion in the "right" perversity $\bar{p}$. However the complex form of the result for the analytic torsion at least complicates the comparison with the topological counterpart. 
\\[3mm] In the actual computation of the analytic torsion of a bounded generalized cone, we use the approach of M. Spreafico [S], combined with elements of [BKD], together with an observation of symmetry in the de Rham complex by M. Lesch in [L3]. 
\\[3mm] In fact J.S. Dowker and K. Kirsten provided in [DK] some explicit results for a bounded generalized cone $M=(0,1]\times N$, giving formulas which related the zeta-determinants of form-valued Laplacians, essentially self-adjoint at the cone singularity and with Dirichlet or generalized Neumann conditions at the cone base, to the spectral information on the base manifold $N$. So, in the manner of [Ch2], they reduced analysis on the cone to that on its base.  
\\[3mm] Theoretically these results can be composed directly into a formula for the analytic torsion. However this approach would disregard the subtle symmetry of the de Rham complex of a bounded generalized cone, which was derived by M. Lesch in [L3]. Furthermore the formulas obtained this way turn out to be rather ineffective.
\\[3mm] We present here an approach that does make use of the symmetry of the de Rham complex and leads to expressions that are easier to evaluate. The calculations are performed for any dimension $\geq 2$ with an overall general result for the analytic torsion of a bounded generalized cone. Further calculations are possible only by specifying the base manifold $N$. In Subsection \ref{an-torsion-general} we provide explicit results in three and in two dimensions.
\\[3mm] The computation is performed for simplicity under an additional assumption of a scaled metric $g^M$, such that the form-valued Laplacians are essentially self-adjoint at the cone singularity. This apparent gap can be considered as closed by the discussion in [BV1]. 
\\[3mm] For a bounded generalized cone of dimension two, over a one-dimensional sphere, one needs to introduce an additional parameter in the Riemannian cone metric in order to deal with bounded generalized cone and not simply with a flat disc $D^1$. There is no need to evaluate the symmetry of the de Rham complex in this case. The calculations of [S] can be generalized to this setup in a straightforward way, which is done in Section \ref{an-torsion-sphere}.
\\[3mm] {\bf Acknowledgements.} The results of this article were obtained during the author's Ph.D. studies at Bonn University, Germany. The author would like to thank his thesis advisor Prof. Matthias Lesch for his support and useful discussions. The author was supported by the German Research Foundation as a scholar of the Graduiertenkolleg 1269 "Global Structures in Geometry and Analysis".

%*********************************************************
\section{The Scalar Analytic Torsion}\
%*********************************************************
\\[-3mm] Consider a bounded generalized cone $M=(0,1]\times N$ over a closed oriented Riemannian manifold $(N,g^N)$ of dimension $\dim N =n$, with the Riemannian metric on $M$ given by a warped product $$g^M = dx^2 \oplus x^2g^N.$$ 
Consider the exterior derivatives and their formal adjoints on differential forms with compact support in the interior of $M$:
\begin{align*}
d_k:\Omega^k_0(M)\rightarrow \Omega_0^{k+1}(M), \\
d^t_k: \Omega^{k+1}_0(M)\rightarrow \Omega^k_0(M).
\end{align*}
Define the minimal closed extensions $d_{k,\min}$ and $d^t_{k,\min}$ as the graph closures in $L^2(\bigwedge\nolimits^*T^*M,\textup{vol}(g^M))$ of the differential operators $d_k$ and $d^t_k$ respectively. The operators $d_{k,\min}$ and $d^t_{k,\min}$ are closed and densely defined. In particular we can form the adjoint operators and set for the maximal extensions:
\begin{align*}
d_{k,\max}:=(d^t_{k,\min})^*, \quad d^t_{k,\max}:=(d_{k,\min})^*.
\end{align*}
The minimal and the maximal extensions of the exterior derivative give rise to self-adjoint extensions of the associated Laplace operator $\triangle_k$. We are interested in the relative self-adjoint extension of $\triangle_k$, defined as follows:
\begin{align}\label{definition}
\triangle_k^{rel}:= &\, d^*_{k,\min}d_{k,\min}+d_{k-1,\min}d^*_{k-1,\min}= \\
=&\, d^t_{k,\max}d_{k,\min}+d_{k-1,\min}d^t_{k-1,\max}. \nonumber 
\end{align}
We will only need the following well-known result, which is a direct application of [L1, Proposition 1.4.7]
\begin{thm}\label{cone-zeta-function}
The self-adjoint operator $\triangle_k^{rel}$ is discrete with the zeta-function $$\zeta(s,\triangle_k^{rel})=\sum\limits_{\lambda \in Sp(\triangle_k^{rel})\backslash \{0\}}\lambda^{-s}, \ Re(s)>m/2,$$ being holomorphic for $Re(s)>m/2$. 
\end{thm}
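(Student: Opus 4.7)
The approach is to recognize $\triangle_k^{rel}$ as an elliptic operator of regular-singular (Fuchs) type near the cone tip $x=0$ with classical local elliptic boundary conditions at the smooth base $x=1$, and then to invoke Lesch's general discreteness and zeta-regularity criterion [L1, Prop.~1.4.7] directly. The self-adjointness itself is automatic from the construction: each summand $d_{k,\min}^* d_{k,\min}$ and $d_{k-1,\min} d_{k-1,\min}^*$ is self-adjoint by the von Neumann lemma, and by the identification $d^t_{k,\max} = d_{k,\min}^*$ from \eqref{definition}, the two are strongly commuting pieces of the Hodge decomposition so their sum is self-adjoint on its natural domain.

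\textbf{Step 1: Unfolding the cone.} First I would decompose every $k$-form on $M = (0,1]\times N$ uniquely as $\omega = \omega_1(x) + dx \wedge \omega_2(x)$ with $\omega_1(x) \in \Omega^k(N)$, $\omega_2(x) \in \Omega^{k-1}(N)$. A direct computation of the cone volume form shows that the map multiplying $\omega_1$ by $x^{(n-2k)/2}$ and $\omega_2$ by $x^{(n-2k+2)/2}$ furnishes a unitary isomorphism
\[
L^2\bigl(\bigwedge\nolimits^{k}T^*M,\textup{vol}(g^M)\bigr) \;\cong\; L^2\bigl((0,1),dx\bigr)\otimes\bigl(L^2(\Omega^k(N))\oplus L^2(\Omega^{k-1}(N))\bigr).
\]
Under this unitary equivalence the Hodge Laplacian takes the familiar Bessel form
\[
\triangle_k \;=\; -\partial_x^2 \;+\; \frac{1}{x^2}\bigl(A_k + c_k\bigr),
\]
where $A_k\geq 0$ is an essentially self-adjoint tangential operator on the cross-section built from the form Laplacians on $N$ in degrees $k-1$ and $k$, and $c_k \in \mathbb{R}$ depends only on $k$ and $n$.

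\textbf{Step 2: Matching boundary data.} The minimal closed extensions appearing in the definition of $\triangle_k^{rel}$ simultaneously impose (i) the vanishing-at-the-tip condition selecting the square-integrable modes near $x=0$, and (ii) relative (Dirichlet-type on tangential components, absolute on the normal) boundary conditions at $x=1$. The essential-self-adjointness assumption on the scaled metric guarantees $A_k + c_k \geq -1/4$ on each tangential eigenmode, which is precisely Lesch's admissibility condition for a Fuchs-type operator at the cone tip; the conditions at $x=1$ are of standard local elliptic type and need no further discussion.

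\textbf{Step 3: Application of [L1, Prop.~1.4.7].} With the setup of Steps 1--2, $\triangle_k^{rel}$ belongs to the class of self-adjoint regular-singular elliptic operators to which Lesch's proposition applies verbatim. It yields compact resolvent, hence discreteness of spectrum bounded below, and a Weyl-type eigenvalue counting $N(\lambda) = O(\lambda^{m/2})$ with $m = \dim M = n+1$. The holomorphy of $\zeta(s,\triangle_k^{rel})$ on $\{\textup{Re}(s)>m/2\}$ is an immediate consequence. The only real bookkeeping obstacle is Step 2: checking that the specific minimal/maximal combination from \eqref{definition} produces the admissible self-adjoint realization in Lesch's sense, rather than some other closed extension of the formal Laplacian; this is routine given the identifications $d_{k,\max}^t = d_{k,\min}^*$ and the self-adjointness of $T^*T$.
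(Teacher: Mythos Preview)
Your proposal is correct and follows exactly the route the paper indicates: the paper does not give a proof at all but simply states the theorem as ``a direct application of [L1, Proposition~1.4.7]''. Your Steps 1--3 spell out precisely the verification needed to place $\triangle_k^{rel}$ in Lesch's framework (the separation-of-variables unitary of your Step~1 is the paper's map $\Psi_k$, and your Bessel normal form is the paper's \eqref{A_k,S_0}), so you have supplied the details the paper omits. One small remark: your Step~2 invokes the scaling assumption to force the limit-point case at $x=0$, but Theorem~\ref{cone-zeta-function} is stated \emph{before} that assumption is introduced; Lesch's result still applies without scaling, one merely has to allow the relative extension to select among the admissible boundary conditions at the tip rather than having them forced.
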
\ \\
\\[-7mm] The meromorphic continuation of zeta-functions for general self-adjoint extensions of regular-singular operators is discussed in a series of sources, notably [L1, Theorem 2.4.1], [Ch2, Theorem 4.1] and [LMP, Theorem 5.7]. 
\\[3mm] For a compact oriented Riemannian manifold $X^m$ the scalar analytic torsion ([RS])
is defined by  $$\log T(X):=\frac{1}{2}\sum_{k=0}^m(-1)^k\cdot k \cdot \zeta'_{k}(0),$$ where $\zeta_k(s)$ denotes the zeta-function of the Laplacian on k-forms of $X$, with relative or the absolute boundary conditions posed at $\partial X$. On compact Riemannian manifolds the zeta-functions $\zeta_k(s)$ extend meromorphically to $\C$ with $s=0$ being regular, so the definition makes sense.
\\[3mm] On the bounded generalized cone $M$ the zeta-functions $\zeta(s,\triangle_k^{rel})$ possibly have a simple pole at $s=0$. However we have the following result of A.Dar:
\begin{thm} \textup{[Dar]} The meromorphic function 
\begin{align}\label{T-function} 
T(M,s):=\frac{1}{2}\sum\limits_{k=0}^m(-1)^k\cdot k\cdot \zeta(s,\triangle_k^{rel})
\end{align}
is regular at $s=0$. Thus the analytic torsion $T^{rel}(M):=\exp (T'(M,s=0))$ of a bounded generalized cone exists.
\end{thm}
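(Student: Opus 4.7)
The plan is to reduce the regularity of $T(M,s)$ at $s=0$ to a statement about the short-time asymptotics of the heat trace of $\triangle_k^{rel}$. Using the Mellin representation
\[
\zeta(s,\triangle_k^{rel}) \;=\; \frac{1}{\Gamma(s)}\int_0^{\infty}t^{s-1}\bigl(\textup{Tr}(e^{-t\triangle_k^{rel}}) - h_k\bigr)\,dt,
\]
with $h_k=\dim\ker\triangle_k^{rel}$, the residue of $\zeta(s,\triangle_k^{rel})$ at $s=0$ equals the coefficient $b_0^{(k)}$ of $t^{0}\log t$ in the short-time expansion of $\textup{Tr}(e^{-t\triangle_k^{rel}})$. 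Regularity of $T(M,s)$ at $s=0$ is therefore equivalent to the vanishing of $\sum_{k=0}^{m}(-1)^{k}k\,b_0^{(k)}$.

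To identify these coefficients, I would exploit the conical structure of $M$. Under the Hodge-type decomposition of $\Omega^{k}(M)$ induced by the warped product $g^M=dx^2\oplus x^2g^N$, the operator $\triangle_k^{rel}$ separates into a direct sum of regular-singular Bessel-type ordinary differential operators on $(0,1]$, indexed by the spectrum of the Hodge Laplacian on $N$ restricted to harmonic, exact and coexact components. Each such one-dimensional operator is self-adjoint with boundary conditions dictated by the relative extension at the cone tip and by appropriate conditions at $x=1$; the short-time contribution of each summand to the heat trace, together with its residue at $s=0$, can be extracted from the classical analysis of regular-singular operators as developed in [Ch2], [L1] or [BS].

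Summing these one-dimensional contributions over the spectral decomposition of $N$ produces a closed formula for $b_0^{(k)}$ in terms of Betti numbers of $N$ and spectral data that depend on $k$ in a controlled way. The key point is that when these expressions are weighted by $(-1)^{k}k$ and summed over $k$, the algebraic structure of the de Rham complex on $N$ forces cancellation: each exact (respectively coexact) eigenform of degree $j$ on $N$ contributes to $b_0^{(k)}$ only for two adjacent values of $k$, and the resulting pair is annihilated by the weighted alternating sum, while the harmonic contributions cancel via Poincar\'e duality on $N$.

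The main obstacle is the explicit determination of the residue at $s=0$ of each Bessel-operator zeta function, since this requires careful bookkeeping of Bessel indices and of the precise boundary conditions at $x=0$ that select the relative extension. Once this spectral information is in hand, the regularity of $T(M,s)$ collapses to a finite-dimensional algebraic identity on the de Rham cohomology of $N$, which is the content of [Dar].
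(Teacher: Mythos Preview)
The paper does not prove this theorem; it is stated as a citation of Dar's result \textup{[Dar]} and is used as a black box throughout. There is therefore no ``paper's own proof'' to compare your proposal against.

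That said, your outline is broadly the right strategy and is close in spirit to what the paper later does implicitly when it decomposes the de Rham complex (Sections~3--4). Two points of your sketch are imprecise and would need to be corrected in an actual proof. First, a coclosed $\eta$-eigenform $\psi\in\Omega^k(N)$ generates, under the separation-of-variables map, a subcomplex supported in \emph{three} consecutive degrees $k,\,k+1,\,k+2$ on $M$, not two; see the paper's complex \eqref{complex-1}. The cancellation of residues in the weighted alternating sum is not ``pairwise'' but comes from the identity $\zeta_{k+1}(s)=\zeta_{k}(s)+\zeta_{k+2}(s)$ together with the fact that $\triangle_0^{\psi}$ and $\triangle_2^{\psi}$ coincide as differential expressions at the cone tip (they differ only in the boundary condition at $x=1$, which does not affect the $t^{0}\log t$ coefficient), so their residues at $s=0$ agree; then $k\cdot r-(k+1)\cdot 2r+(k+2)\cdot r=0$. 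Second, the harmonic contributions do not require Poincar\'e duality to cancel: each such contribution is the zeta function of a single regular-singular operator $L_{\nu}(\infty)$ on a finite interval, and these are already regular at $s=0$ individually (cf.\ [L, Theorem~1.1]), so no residue arises from the harmonic sector at all.

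With these corrections your plan is viable, and is essentially how Dar's argument proceeds.
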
 \ \\
\\[-7mm] Thus even though the zeta-functions $\zeta(s,\triangle_k^{rel})$ need not be regular at $s=0$, their residua at $s=0$ cancel in the alternating weighted sum $T(M,s)$.
\\[3mm] The statement extends to general compact manifolds with isolated conical singularities. A compact manifold with a conical singularity is a Riemannian manifold $(M_1\cup_N U, g)$ partitioned by a compact hypersurface $N$, such that $M_1$ is a compact manifold with boundary $N$ and $U$ is isometric to $(0,\epsilon]\times N$ with the metric over $U$ being of the following form $$g|_U=dx^2\oplus x^2g|_N.$$
In this article we compute for the bounded generalized cone $M$ the analytic continuation of $\log T(M,s)$ to $s=0$ by means of a decomposition of the de Rham complex. We perform the computations under an isometric identification below.
\\[3mm] The volume forms on $M$ and $N$, associated to the Riemannian metrics $g^M$ and $g^N$, are related as follows: 
$$\textup{vol}(g^M)=x^n dx \wedge \textup{vol}(g^N).$$
Consider as in [BS, (5.2)] the following separation of variables map, which is linear and bijective:
\begin{align}\label{separation}
\Psi_k : C^{\infty}_0((0,1),\Omega^{k-1}(N)\oplus \Omega^k(N))\to \Omega_0^k(M) \\
(\phi_{k-1},\phi_k)\mapsto x^{k-1-n/2}\phi_{k-1}\wedge dx + x^{k-n/2}\phi_k, \nonumber
\end{align}
where $\phi_k,\phi_{k-1}$ are identified with their pullback to $M$ under the natural projection $\pi: (0,1]\times N\to N$ onto the second factor, and $x$ is the canonical coordinate on $(0,1]$. Here $\Omega_0^k(M)$ denotes differential forms of degree $k=0,..,n+1$ with compact support in the interior of $M$. The separation of variables map $\Psi_k$ extends to an isometry with respect to the $L^2$-scalar products, induced by the volume forms vol$(g^M)$ and vol$(g^N)$. 
\begin{prop}\label{unitary} The separation of variables map \eqref{separation} extends to an isometric identification of $L^2-$Hilbert spaces
\begin{align*}
\Psi_k: L^2([0,1], L^2(\wedge^{k-1}T^*N\oplus \wedge^kT^*N, \textup{vol}(g^N)), dx)\to L^2(\wedge^kT^*M, \textup{vol}(g^M)).
\end{align*}
\end{prop}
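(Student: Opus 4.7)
The plan is to establish the isometry identity on the dense subspace of compactly supported smooth sections where $\Psi_k$ is originally defined, and then extend by continuity. The first step is to unwind the induced metric on $k$-forms coming from the warped product $g^M=dx^2\oplus x^2 g^N$: the dual metric on the subbundle $T^*N\subset T^*M$ equals $x^{-2}(g^N)^{*}$, so pullback via $\pi$ gives, for any $\phi_k\in\Omega^k(N)$, the pointwise relation $|\phi_k|^2_{g^M}=x^{-2k}|\phi_k|^2_{g^N}$. Since $dx$ is a unit covector orthogonal to $T^*N$, the same computation yields $|\phi_{k-1}\wedge dx|^2_{g^M}=x^{-2(k-1)}|\phi_{k-1}|^2_{g^N}$ for $\phi_{k-1}\in\Omega^{k-1}(N)$, and the two summands in $\Psi_k(\phi_{k-1},\phi_k)$ are pointwise orthogonal because one contains the factor $dx$ and the other does not.

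Next I would combine these identities with the volume form relation $\textup{vol}(g^M)=x^n\,dx\wedge\textup{vol}(g^N)$. The prefactors in the definition of $\Psi_k$ are chosen precisely so that the $x$-powers telescope: $x^{2(k-1-n/2)}\cdot x^{-2(k-1)}=x^{-n}$ and $x^{2(k-n/2)}\cdot x^{-2k}=x^{-n}$, and both are cancelled by the Jacobian $x^n$. Integrating gives
\begin{align*}
\|\Psi_k(\phi_{k-1},\phi_k)\|^2_{L^2(M)}
&=\int_0^1\!\!\int_N\bigl(|\phi_{k-1}|^2_{g^N}+|\phi_k|^2_{g^N}\bigr)\,\textup{vol}(g^N)\,dx\\
&=\|(\phi_{k-1},\phi_k)\|^2,
\end{align*}
which shows that $\Psi_k$ is an isometry on the dense subspace $C^{\infty}_0((0,1),\Omega^{k-1}(N)\oplus\Omega^k(N))$ of the domain Hilbert space.

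For surjectivity, I would use that every $\omega\in\Omega_0^k(M)$, supported in $(0,1)\times N$, admits a unique decomposition $\omega=\alpha\wedge dx+\beta$ with $\alpha,\beta$ depending smoothly on $x\in(0,1)$ and having no $dx$-component. Setting $\phi_{k-1}:=x^{n/2-k+1}\alpha$ and $\phi_k:=x^{n/2-k}\beta$ exhibits $\omega=\Psi_k(\phi_{k-1},\phi_k)$, so the image of $\Psi_k$ contains $\Omega_0^k(M)$, which is dense in $L^2(\wedge^k T^*M,\textup{vol}(g^M))$. Combining the isometry property with density of the image yields that $\Psi_k$ extends uniquely to a unitary isomorphism between the two $L^2$-completions.

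The only genuine issue is notational bookkeeping with the exponents of $x$; there is no analytic difficulty, since both the pointwise metric identifications and the $L^2$-extension by continuity are standard once the warped product structure is exploited correctly.
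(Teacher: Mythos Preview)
Your argument is correct: the pointwise metric computation on the warped product, the cancellation of the $x$-powers against the Jacobian $x^n$, and the density argument for surjectivity are all valid and constitute a complete proof. The paper itself does not supply a proof of this proposition; it merely states, immediately before the proposition, that $\Psi_k$ ``extends to an isometry with respect to the $L^2$-scalar products, induced by the volume forms $\textup{vol}(g^M)$ and $\textup{vol}(g^N)$,'' treating the claim as a standard consequence of the warped product structure. Your write-up therefore fills in exactly the computation the paper leaves implicit, and there is no alternative route to compare.
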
\ \\
\\[-7mm] Under this identification we obtain for the exterior derivative, as in [BS, (5.5)]
\begin{equation}\label{derivative} 
\Psi_{k+1}^{-1} d_k \Psi_k= \left( \begin{array}{cc}0&(-1)^k\partial_x\\0&0\end{array}\right)+\frac{1}{x}\left( \begin{array}{cc}d_{k-1,N}&c_k\\0&d_{k,N}\end{array}\right),
\end{equation}
where $c_k=(-1)^k(k-n/2)$ and $d_{k,N}$ denotes the exterior derivative on differential forms over $N$ of degree $k$. Taking adjoints we find 
\begin{equation}\label{coderivative} 
\Psi_k^{-1} d_k^t \Psi_{k+1}= \left( \begin{array}{cc}0&0\\(-1)^{k+1}\partial_x&0\end{array}\right)+\frac{1}{x}\left( \begin{array}{cc}d_{k-1,N}^t&0\\c_k&d_{k,N}^t\end{array}\right).
\end{equation}
\\[3mm] Consider now the Gauss-Bonnet operator $D_{GB}^+$ mapping forms of even degree to forms of odd degree. The Gauss-Bonnet operator acting on forms of odd degree is simply the formal adjoint $D_{GB}^-=(D_{GB}^+)^t$. With respect to $\Psi_{+}:=\oplus \Psi_{2k}$ and $\Psi_{-}:=\oplus \Psi_{2k+1}$ the relevant operators take the following form:
\begin{align}
&\Psi^{-1}_-D_{GB}^+\Psi_+ = \frac{d}{dx}+\frac{1}{x}S_0, \quad \Psi^{-1}_+D_{GB}^-\Psi_- = -\frac{d}{dx}+\frac{1}{x}S_0, \label{gauss-bonnet}\\
 &\Psi^{-1}_+\triangle^+ \Psi_+= \Psi^{-1}_+(D_{GB}^+)^t\Psi_-\Psi_-^{-1}D_{GB}^+ \Psi_+ = -\frac{d^2}{dx^2}+ \frac{1}{x^2}S_0(S_0+1),\label{laplacian} \\ \nonumber
  &\Psi^{-1}_-\triangle^- \Psi_-= \Psi^{-1}_-(D_{GB}^-)^t\Psi_+\Psi_+^{-1}D_{GB}^- \Psi_- = -\frac{d^2}{dx^2}+ \frac{1}{x^2}S_0(S_0-1).
\end{align}
where $S_0$ is a first order elliptic differential operator on $\Omega^*(N)$. The transformed Gauss-Bonnet operators in \eqref{gauss-bonnet} are regular singular in the sense of [BS] and [Br, Section 3]. Moreover, the Laplace Operator on $k$-forms over $M$ transforms to 
\begin{align}\label{A_k,S_0}
\Psi_k \triangle_k \Psi_k^{-1}=-\frac{d^2}{dx^2}+ \frac{1}{x^2}A_k,
\end{align}
where $A_k$ is a symmetric differential operator of order two, a restriction of $S_0(S_0+(-1)^k)$ to $\Omega^{k-1}(N)\oplus \Omega^k(N)$ to $\Omega^{k-1}(N)\oplus \Omega^k(N)$. The non-product situation on the bounded generalized cone $M$ is pushed into the $x$-dependence of the tangential part of the Laplacian. 
\\[3mm] We continue below under the isometric transformation $\Phi$. In particular the de Rham complex and the associated Laplace operators are considered in their equivalent form under the isometry.

%***************************************************
\section{Decomposition of the de Rham complex}\
%***************************************************
\\[-3mm] Following [L3], we decompose the de Rham complex of $M$ into a direct sum of subcomplexes of two types. The first type of the subcomplexes is given as follows. Let $\psi \in E_{\la}^k, \la \in V_k, k=1,..,n$ be a fixed non-zero generator of $E_{\la}^k$. Put
\begin{align*}
&\xi_1:=(0,\psi)\in \Omega^{k-1}(N)\oplus \Omega^{k}(N), \\ &\xi_2:=(\psi,0)\in \Omega^{k}(N)\oplus \Omega^{k+1}(N), \\
&\xi_3:=(0,\frac{1}{\sqrt{\lambda}}d_N\psi)\in \Omega^{k}(N)\oplus \Omega^{k+1}(N), \\ &\xi_4:=(\frac{1}{\sqrt{\lambda}}d_N\psi,0)\in \Omega^{k+1}(N)\oplus \Omega^{k+2}(N).
\end{align*}
Then $C^{\infty}_0((0,1),\langle \xi_1,\xi_2,\xi_3,\xi_4\rangle)$ is invariant under $d,d^t$ and we obtain a subcomplex of the de Rham complex:
\begin{align}\label{complex-1}
0 \rightarrow C_0^{\infty}((0,1),\left< \xi_1\right>) \xrightarrow{d^{\psi}_0} C_0^{\infty}((0,1),\left<\xi_2,\xi_3\right>) \xrightarrow{d^{\psi}_1}C_0^{\infty}((0,1),\left<\xi_4\right>)\rightarrow 0,
\end{align}
where $d^{\psi}_0,d^{\psi}_1$ take the following form with respect to the chosen basis:
\begin{align*}
d_0^{\psi}=\left(\begin{array}{c}(-1)^k\partial_x+\frac{c_k}{x}\\ x^{-1}\sqrt{\la}\end{array}\right), \quad d_1^{\psi}=\left(x^{-1}\sqrt{\la}, \ (-1)^{k+1}\partial_x+\frac{c_{k+1}}{x}\right).
\end{align*}
By [BV1, Theorem 4.10] separating out the subcomplex above is compatible with the boundary conditions of $\triangle_{k}^{rel}, \triangle_{k+1}^{rel}, \triangle_{k+2}^{rel}.$ In other words we have a decomposition into reducing subspaces of the Laplacians, see [W2] and [BV1, Subsection 4.2] for further details. Hence the relative boundary conditions induce self-adjoint extensions of the Laplacians of the subcomplex:
\begin{align}
\dom (\triangle_k^{rel})\cap L^2((0,1),E_{\la}^k)=\dom ((d_0^{\psi})_{\max}^td_{0,\min}^{\psi})=:\dom(\triangle_{0,\lambda}^k), \\
\dom (\triangle_{k+2}^{rel})\cap L^2((0,1),d_N E_{\la}^k)=\dom (d_{1,\min}^{\psi}(d_1^{\psi})_{\max}^t)=:\dom(\triangle_{2,\lambda}^k), \\
\dom (\triangle_{k+1}^{rel})\cap L^2((0,1),\widetilde{E}_{\la}^k)=\dom ((d_0^{\psi})_{\max}^td_{0,\min}^{\psi}+d_{1,\min}^{\psi}(d_1^{\psi})_{\max}^t)=:\dom(\triangle^k_{\lambda}).
\end{align}
Next we compute the associated Laplacians
\begin{align}\label{psi-laplacians}
\triangle_0^{\psi}:=(d_0^{\psi})^td_0^{\psi}=-\partial_x^2+\frac{1}{x^2}\left[\eta +\left(k+\frac{1}{2}-\frac{n}{2}\right)^2-\frac{1}{4}\right]=d_1^{\psi}(d_1^{\psi})^t=:
\triangle_2^{\psi}.
\end{align}
under the identification of any $\phi =f\cdot \xi_{i}\in C^{\infty}_0((0,1),\langle \xi_{i}\rangle),i=1,4$ with its scalar part $f \in C^{\infty}_0(0,1)$. We continue under this identification from here on.
\\[3mm] The second type of the subcomplexes comes from the harmonics on the base manifold $N$ and is given as follows. Consider $\mathcal{H}^k(N)$ and fix an orthonormal basis $\{u_i\},i=1,..,\dim \mathcal{H}^k(N)$ of $\mathcal{H}^k(N)$. Observe that for any $i$ the subspace $C^{\infty}_0((0,1),\langle0\oplus u_i,u_i\oplus 0\rangle)$ is invariant under $d,d^t$ and we obtain a subcomplex of the de Rham complex
\begin{align}\label{complex-2}
0\to C^{\infty}_0((0,1),\langle 0\, \oplus \, &u^k_i,\rangle)\xrightarrow{d} C^{\infty}_0((0,1),\langle u^k_i\oplus 0\rangle) \to 0, \\
&d=(-1)^k\partial_x+\frac{c_k}{x},
\end{align}
where the action of $d$ is identified with its scalar action. We continue under this identification. As for the subcomplex \eqref{complex-1}, separating out the subcomplex above is compatible with the relative boundary conditions by [BV1, Theorem 4.10]. Hence we obtain for the induced self-adjoint extensions
\begin{align}\nonumber
\dom (\triangle_k^{rel})\cap L^2((0,1),&\langle 0\oplus u^k_i\rangle)=\dom (d^t_{\max}d_{\min})=\\ \label{H1-rel}&=\dom\left((-1)^{k+1}\partial_x+\frac{c_k}{x}\right)_{\max}\left((-1)^{k}\partial_x+\frac{c_k}{x}\right)_{\min}, \\ \nonumber
\dom (\triangle_{k+1}^{rel})\cap L^2((0,1),&\langle u^k_i \oplus 0 \rangle)=\dom (d_{\min}d^t_{\max})=\\ \label{H2-rel}&=\dom\left((-1)^{k}\partial_x+\frac{c_k}{x}\right)_{\min}\left((-1)^{k+1}\partial_x+\frac{c_k}{x}\right)_{\max}.
\end{align}
By the Hodge decomposition on the base manifold $N$ the de Rham complex $(\Omega_0^*(M),d)$ decomposes completely into subcomplexes of the two types above. This decomposition gives in each degree $k$ a compatible decomposition for $\triangle_k^{rel}$, as observed [BV1, Theorem 4.10]. In the language of [W2] we have a decomposition into reducing subspaces of the Laplacians. Hence the Laplacians $\triangle_k^{rel}$ induce self-adjoint relative extensions of the Laplacians of the subcomplexes. In particular each subcomplex contributes to the function in \eqref{T-function} as follows.
\\[3mm] The relative boundary conditions turn the complex \eqref{complex-1} of the first type into a Hilbert complex (see [BL1]) of the following general form:
\begin{align*}
0 \rightarrow  H_k  \xrightarrow{D}  H_{k+1}  \xrightarrow{D}  H_{k+2} \rightarrow  0.
\end{align*}
By the specific form of the subcomplex we have the following relation between the zeta-functions corresponding to the Laplacians of the subcomplex
\begin{align}\label{spectral-relation1}
\zeta_{k+1}(s)=\zeta_{k}(s)+\zeta_{k+2}(s).
\end{align}
From the spectral relation \eqref{spectral-relation1} we deduce that the contribution of the subcomplex $H$ to the function $T(M,s)$ amounts to
\begin{align}
\nonumber \frac{(-1)^k}{2}\left[k\zeta_k(s)-(k+1)\zeta_{k+1}(s) +(k+2)\zeta_{k+2}(s)\right]\\ =\frac{(-1)^k}{2}(\zeta_{k+2}(s)-\zeta_{k}(s)).\label{contribution}
\end{align} 
Since there are in fact infinitely many subcomplexes of the first type, we first have to add up the contributions for $Re(s)$ large and then continue the sum analytically to $s=0$. Then the derivative at zero gives the contribution to $T(M)$. 
\\[3mm] For the contribution of the subcomplexes \eqref{complex-2} of the second kind to the analytic torsion, note that the relative boundary conditions turn the complex of second type into a Hilbert complex of the following general form:
\begin{align*}
0 \rightarrow  H_k  \xrightarrow{D}  H_{k+1}  \rightarrow  0.
\end{align*}
There are only finitely many such subcomplexes, since $\dim \mathcal{H}^*(N)<\infty$. Hence we obtain directly for the contribution to $\log T(M)$ from each of such subcomplexes
\begin{align}\label{spectral-relation2}
\frac{(-1)^{k+1}}{2}\zeta'(D^*D,s=0).
\end{align}

%*********************************************************
\section{Symmetry in the Decomposition}\label{symmetry-decomposition} \
%*********************************************************
\\[-3mm] In this section we present a symmetry of the de Rham complex on a model cone, as elaborated by M. Lesch in [L3]. Consider the subcomplexes \eqref{complex-1} of the first type
\begin{align*}
0 \rightarrow C_0^{\infty}((0,1),\left< \xi_1\right>) \xrightarrow{d_0} C_0^{\infty}((0,1),\left<\xi_2,\xi_3\right>) \xrightarrow{d_1}C_0^{\infty}((0,1),\left<\xi_4\right>)\rightarrow 0.
\end{align*}
with the associated Laplacians (identified with their scalar action)
\begin{align}
\triangle_0^{\psi}:=(d_0^{\psi})^td_0^{\psi}=-\partial_x^2+\frac{1}{x^2}\left[\eta +\left(k+\frac{1}{2}-\frac{n}{2}\right)^2-\frac{1}{4}\right]=d_1^{\psi}(d_1^{\psi})^t=:
\triangle_2^{\psi}.
\end{align}
By [BV1, Theorem 4.10] separating out the subcomplex above provides a decomposition into reducing subspaces of $\triangle_{k}^{rel}, \triangle_{k+1}^{rel}, \triangle_{k+2}^{rel}.$ Hence the relative boundary conditions induce self-adjoint extensions of the Laplacians $\triangle_{0}^{\psi}, \triangle_{2}^{\psi}$
\begin{align*}
\triangle_{0,rel}^{\psi}=(d_0^{\psi})_{\max}^td_{0,\min}^{\psi}, \quad \triangle_{2,rel}^{\psi}=d_{1,\min}^{\psi}(d_1^{\psi})_{\max}^t
\end{align*}
Now we discuss the relative boundary conditions for $\triangle_0^{\psi}, \triangle_2^{\psi}$. Assume that the lowest non-zero eigenvalue $\eta$ of $\triangle_{k,N}$ is $\eta > 1$. This can always be achieved by an appropriate scaling of the metric on $N$ 
\begin{align}\label{scaling}
g^{N,c}:=c^{-2}g^N, \quad c >0 \ \textup{large enough}.
\end{align}
More precisely, the Laplacian $\triangle_N^c$ defined on $\Omega^*(N)$ with respect to $g^{N,c}$ is related to the original Laplacian $\triangle_N$ as follows $$\triangle_N^c=c^2\triangle_N.$$ Hence indeed for $c>0$ sufficiently large we achieve that the Laplacian $\triangle_N^c$ has no "small" non-zero eigenvalues.
\\[3mm] This guarantees that $\triangle_0^{\psi}$ and $\triangle_2^{\psi}$ are in the limit point case at $x=0$ and hence all their self-adjoint extensions in $L^2(0,1)$ coincide at $x=0$. Hence we only need to consider the relative boundary conditions at $x=1$. With [BV1, Proposition 4.5], which is essentially the trace theorem of L. Paquet in [P], we obtain
\begin{align*}
&\mathcal{D}(\triangle_{0,rel}^{\psi})=\{f\in\mathcal{D}(\triangle_{0,\max}^{\psi})|f(1)=0\}, \\ &\mathcal{D}(\triangle_{2,rel}^{\psi})=\{f\in \mathcal{D}(\triangle_{2,\max}^{\psi})|(-1)^kf'(1)+c_{k+1}f(1)=0\}.
\end{align*}
The values $f(1),f'(1)$ are well-defined since $\mathcal{D}(\triangle_{0,2,\max}^{\psi})\!\subset H^2_{\textup{loc}}(0,1]$. 
\begin{remark}
The assumption on the lower bound of the non-zero eigenvalues of $\triangle_{k,N}$ can be dropped. Then the discussion of a finite direct sum of model Laplacians in the limit circle case enters the calculations. This setup has been elaborated in [BV1, Section 5].
\end{remark} \ \\
\\[-7mm] Next we consider the twin-subcomplex, associated to the subcomplex discussed above. Let $\phi:=*_N\psi \in \Omega^{n-k}(N)$. Put 
\begin{align*}
&\widetilde{\xi_1}:=(0,\frac{1}{\sqrt{\eta}}d^t_N\phi)\in \Omega^{n-k-2}(N)\oplus \Omega^{n-k-1}(N), \\ &\widetilde{\xi_2}:=(\frac{1}{\sqrt{\eta}}d^t_N\phi,0)\in \Omega^{n-k-1}(N)\oplus \Omega^{n-k}(N), \\
&\widetilde{\xi_3}:=(0,\phi)\in \Omega^{n-k-1}(N)\oplus \Omega^{n-k}(N), \\ &\widetilde{\xi_4}:=(\phi,0)\in \Omega^{n-k}(N)\oplus \Omega^{n-k+1}(N).
\end{align*}
Again the subspace $C_0^{\infty}((0,1),\langle \widetilde{\xi_1},\widetilde{\xi_2},\widetilde{\xi_3},\widetilde{\xi_4}\rangle)$ is invariant under the action of $d$ and $d^t$ and in fact we obtain a complex
\begin{align*}
0 \rightarrow C_0^{\infty}((0,1),\langle \widetilde{\xi_1}\rangle) \xrightarrow{d_0^{\phi}} C_0^{\infty}((0,1),\langle\widetilde{\xi_2},\widetilde{\xi_3}\rangle) \xrightarrow{d_1^{\phi}}C_0^{\infty}((0,1),\langle\widetilde{\xi_4}\rangle)\rightarrow 0.
\end{align*}
By computing explicitly the action of the exterior derivative \eqref{derivative} on the basis elements $\widetilde{\xi_i}$ we obtain 
\begin{align*}
d_0^{\phi}=\left(\begin{array}{c}(-1)^{n-k-1}\partial_x+\frac{c_{n-k-1}}{x}\\ x^{-1}\sqrt{\eta}\end{array}\right), \quad d_1^{\phi}=\left(x^{-1}\sqrt{\eta}, \ (-1)^{n-k}\partial_x+\frac{c_{n-k}}{x}\right).
\end{align*}
As for the first subcomplex we compute the relevant Laplacians:
\begin{align}\label{phi-laplacians}
\triangle_0^{\phi}=\triangle_2^{\phi}=-\partial_x^2+\frac{1}{x^2}\left[\eta +\left(k+\frac{1}{2}-\frac{n}{2}\right)^2-\frac{1}{4}\right]=\triangle_0^{\psi}=\triangle_2^{\psi},
\end{align}
where the operators are identified with their scalar actions. As before, separating out the subcomplex above, we decompose $\triangle^{rel}_{n-k\pm 1}, \triangle^{rel}_{n-k}$ compatibly. Hence the relative boundary conditions induce self-adjoint extensions
\begin{align*}
\triangle_{0,rel}^{\phi}=(d_0^{\phi})_{\max}^td_{0,\min}^{\phi}, \quad \triangle_{2,rel}^{\phi}=d_{1,\min}^{\phi}(d_1^{\phi})_{\max}^t
\end{align*}
of the Laplacians $\triangle_0^{\phi},\triangle_2^{\phi}$ respectively. Under the scaling assumption of \eqref{scaling} the relative boundary conditions for this pair of operators are computed to
\begin{align*}
&\mathcal{D}(\triangle_{0,rel}^{\phi})=\{f\in\mathcal{D}(\triangle_{0,\max}^{\phi})|f(1)=0\}, \\ &\mathcal{D}(\triangle_{2,rel}^{\phi})=\{f\in \mathcal{D}(\triangle_{2,\max}^{\phi})|(-1)^{n-k+1}f'(1)+c_{n-k}f(1)=0\},
\end{align*}
with [BV1, Proposition 4.5]. As before the values $f(1),f'(1)$ are well-defined since $\mathcal{D}(\triangle_{0,2,\max}^{\phi})\subset H^2_{\textup{loc}}(0,1]$. 
\\[3mm] So in total we obtain four self-adjoint operators, which differ only by their boundary conditions. Unfortunately the differences in the domains do not allow to cancel the contribution of the two twin-subcomplexes to the analytic torsion. However the symmetry still allows us to perform explicit computations. 
\\[3mm] Recall that $\psi$ was chosen to be a normalized coclosed $\eta$-eigenform on $N$ of degree $k$ and $\phi=*_N\psi$. Denote the dependence of the generating forms $\psi$ and $\phi$ on the eigenvalue $\eta$ by $\psi(\eta)$ and $\phi (\eta)$. Introduce further the notation
\begin{align*}
D(k):=&\{\lambda \in \textup{Spec}\triangle^{\psi(\eta)}_{0,rel} | \eta \in \textup{Spec} \triangle_{k, ccl, N}\backslash \{0\}\} \\
= &\{\lambda \in \textup{Spec}\triangle^{\phi(\eta)}_{0,rel} | \eta \in \textup{Spec} \triangle_{k, ccl, N}\backslash \{0\}\}, \\
&\hspace{10mm} N_1(k):=\{\lambda \in \textup{Spec}\triangle^{\psi(\eta)}_{2,rel} | \eta \in \textup{Spec} \triangle_{k, ccl, N}\backslash \{0\}\} , \\
&\hspace{10mm} N_2(k):=\{\lambda \in \textup{Spec}\triangle^{\phi(\eta)}_{2,rel} | \eta \in \textup{Spec} \triangle_{k, ccl, N}\backslash \{0\}\} ,
\end{align*}
where all eigenvalues are counted according to their multiplicities. Using this notation we now introduce the following zeta-functions for $Re(s)\gg0$ 
\begin{align*}
\zeta_D^k(s):=\sum\limits_{\lambda \in D(k)} \lambda ^{-s}, \ \zeta^k_{N_1}(s):=\sum\limits_{\lambda \in N_1(k)} \lambda^{-s}, \ \zeta^k_{N_2}(s):=\sum\limits_{\lambda \in N_2(k)}\lambda^{-s}, Re(s) \gg0.
\end{align*}
The $D$-subscript is aimed to point out that the zeta-functions in the sum are associated to Laplacians with Dirichlet boundary conditions at $x=1$. Similarly the $N$-subscripts point out the generalized Neumann boundary conditions at $x=1$, which are however different for $\triangle_2^{\phi}$ and $\triangle_2^{\psi}$. 
\\[3mm] The zeta-functions $\zeta_D^k(s), \zeta^k_{N_1}(s)$ and $\zeta^k_{N_2}(s)$ are by Theorem \ref{cone-zeta-function} holomorphic for $Re(s)$ sufficiently large, since they sum over eigenvalues of $\triangle^{rel}_*$ but with lower multiplicities. In view of \eqref{contribution}, which describes the contribution to analytic torsion from the subcomplexes, we set for $Re(s)$ large
\begin{defn}\label{zeta}
$\zeta_k(s):=\zeta^k_{N_1}(s)-\zeta^k_{D} (s))+(-1)^{n-1}(\zeta^k_{N_2}(s)-\zeta^k_{D}(s)).$
\end{defn}
\begin{remark}
Note that $\zeta_D^k(s)$ in the definition of $\zeta_k(s)$ cancel for $m=\dim M$ odd, simplifying the expression for $\zeta_k(s)$ considerably. Further simplifications (notably Proposition \ref{P}) take place throughout the discussion, so that an effective result can be obtained in the end.
\end{remark} \ \\
\\[-7mm]  Below we provide the analytic continuation of $\zeta_k(s)$ to $s=0$ for any fixed degree $k<\dim N-1$  and compute $(-1)^k \zeta'_k(0)$. The contribution coming from the subcomplexes of the second type \eqref{complex-2}, induced by the harmonic forms on the base $N$, is not included in $\zeta_k(s)$ and will be determined explicitly in a separate discussion.
\begin{remark}\label{double-count}
The total contribution of subcomplexes \eqref{complex-1} of first type to the logarithmic scalar analytic torsion $\log T(M)$ of the odd-dimensional bounded generalized cone $M$ is given by $$\frac{1}{2}\sum\limits_{k=0}^{n/2-1}(-1)^k\cdot \zeta'_k(0).$$ For an even-dimensional cone $M$ the zeta-function $\zeta_k(s)$ counts in the degree $k=(n-1)/2$ each subcomplex of type \eqref{complex-1} twice. Thus the total contribution of subcomplexes of first type to $\log T(M)$ is given by $$\frac{1}{2}\left(\sum\limits_{k=0}^{(n-3)/2}(-1)^k\cdot \zeta'_k(0)+\frac{(-1)^{\frac{(n-1)}{2}}}{2}\cdot \zeta'_{\frac{n-1}{2}}(0)\right),$$ where the first sum is set to zero for $\dim M=n+1=2$.
\end{remark}

%*********************************************************
\section{Some auxiliary analysis}\label{auxiliary}\
%*********************************************************
\\[-3mm] Fix a real number $\nu >1$ and consider the following differential operator 
$$l_{\nu}:=-\frac{d^2}{dx^2}+\frac{\nu^2-1/4}{x^2}:C^{\infty}_0(0,1)\to C^{\infty}_0(0,1).$$ 
By the choice $\nu >1$ the operator $l_{\nu}$ is in the limit point case at $x=0$ and hence its maximal and the minimal extensions coincide at $x=0$. Therefore we only need to fix boundary conditions at $x=1$ to define a self-adjoint extension of $l_{\nu}$. Put for $\A\in \R^*$:
$$\dom (L_{\nu}(\A)):=\{f \in \dom (l_{\nu,\max})|(\A-1/2)^{-1}f'(1)+f(1)=0\},$$
where $\A=\infty$ defines the Dirichlet boundary conditions at $x=1$ and $\A=1/2$ $-$ the pure Neumann boundary conditions at $x=1$.
\begin{prop}\label{bded-below}
The self-adjoint operator $L_{\nu}(\A), \A\in \R^*$ is discrete and bounded from below. For $\A^2 < \nu^2$ and $\A=\infty$ the operator $L_{\nu}(\A)$ is positive.
\end{prop}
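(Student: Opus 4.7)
The plan is to analyze $L_{\nu}(\A)$ through its associated quadratic form and deduce all three claims in sequence. Because $\nu>1$ puts $l_{\nu}$ in the limit point case at $x=0$, every $f\in \dom(l_{\nu,\max})$ behaves like the regular Frobenius solution $O(x^{\nu+1/2})$ near zero, so both $f'\bar f$ and $|f|^2/x$ vanish at $x=0$. Using this and the boundary condition $f'(1)=-(\A-1/2)f(1)$ at $x=1$, integration by parts yields
\begin{equation*}
\langle L_{\nu}(\A) f,f\rangle = (\A-1/2)|f(1)|^2 + \int_0^1 |f'|^2\,dx + (\nu^2-1/4)\int_0^1 \frac{|f|^2}{x^2}\,dx.
\end{equation*}

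For discreteness, I would observe that the form domain embeds continuously into $H^1(0,1)$, and by Rellich--Kondrachov the embedding $H^1(0,1)\hookrightarrow L^2(0,1)$ is compact; hence the resolvent is compact and the spectrum is discrete. For the lower bound, the only potentially negative term in the form is $(\A-1/2)|f(1)|^2$ when $\A<1/2$, and this is absorbed via the trace inequality $|f(1)|^2 \le \epsilon \|f'\|^2 + C_{\epsilon}\|f\|^2$: choosing $\epsilon$ small compared to $|1/2-\A|^{-1}$ leaves a positive multiple of $\|f'\|^2$ plus the nonnegative $(\nu^2-1/4)\|f/x\|^2$, at the price of an additive constant, giving $\langle L_{\nu}(\A) f,f\rangle \ge -C\|f\|^2$.

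For strict positivity in the regime $\A^2<\nu^2$, the plan is to exploit the factorization $l_{\nu}=B^*B$ with $B:=\partial_x - (\nu+1/2)/x$. A direct integration by parts (again using the decay of $f$ at $x=0$) yields
\begin{equation*}
\int_0^1 |Bf|^2\,dx = \int_0^1 |f'|^2\,dx - (\nu+1/2)|f(1)|^2 + (\nu^2-1/4)\int_0^1\frac{|f|^2}{x^2}\,dx,
\end{equation*}
so the quadratic form rearranges cleanly as
\begin{equation*}
\langle L_{\nu}(\A) f,f\rangle = \|Bf\|^2 + (\A+\nu)|f(1)|^2.
\end{equation*}
Since $\A^2<\nu^2$ implies $\A+\nu>0$, both summands are nonnegative, and vanishing forces $Bf=0$ together with $f(1)=0$. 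But $Bf=0$ has only the solution $f(x)=Cx^{\nu+1/2}$, and then $f(1)=0$ gives $C=0$, whence $f=0$. For the Dirichlet case $\A=\infty$ the condition $f(1)=0$ reduces the quadratic form to $\|f'\|^2 + (\nu^2-1/4)\|f/x\|^2$, which is strictly positive for $f\ne 0$ because $\nu^2-1/4>0$.

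The genuine subtlety is the justification of the two integration-by-parts identities at the singular endpoint $x=0$. Once the Frobenius/asymptotic analysis of $\dom(l_{\nu,\max})$ for $\nu>1$ is invoked to guarantee that $f'\bar f$ and $|f|^2/x$ tend to zero at the tip, the discreteness, lower boundedness and positivity assertions all reduce to algebraic manipulation and the factorization $l_{\nu}=B^*B$ tailored to the admissible range of $\A$.
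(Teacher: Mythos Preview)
Your argument is correct and close in spirit to the paper's, with a few differences worth noting. For discreteness and semi-boundedness the paper simply cites references (Br\"uning--Seeley for discreteness; Weidmann's theorem that all self-adjoint extensions of a semi-bounded symmetric operator remain semi-bounded for the lower bound), whereas you give self-contained arguments via the form domain, the compact Sobolev embedding $H^1(0,1)\hookrightarrow L^2(0,1)$, and a trace inequality. Your route is more explicit and avoids external input. For positivity both proofs rest on the same first-order factorization: the paper writes $l_\nu = (-\partial_x + (\epsilon-1/2)/x)(\partial_x + (\epsilon-1/2)/x) + (\nu^2-\epsilon^2)/x^2$ with a free parameter $\epsilon^2<\nu^2$, so that the residual term $\|\sqrt{\nu^2-\epsilon^2}\,f/x\|^2$ is strictly positive for $f\not\equiv 0$ and one reads off $\langle l_\nu f,f\rangle > -\overline{f(1)}(f'(1)+(\epsilon-1/2)f(1))$, which the boundary condition then makes nonnegative. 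You instead take the extremal choice $\epsilon=-\nu$, which kills the residual term but yields the clean identity $\langle L_\nu(\A)f,f\rangle = \|Bf\|^2 + (\A+\nu)|f(1)|^2$ and then rules out equality by solving $Bf=0$. Both work; the paper's choice gives strict positivity in one line, yours gives a sharper structural formula.

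One small correction: the asymptotic you invoke, $f(x)=O(x^{\nu+1/2})$, is too optimistic for a general element of $\dom(l_{\nu,\max})$; what one actually has (and what the paper quotes from [BV1]) is $f(x)=O(x^{3/2})$, $f'(x)=O(x^{1/2})$ as $x\to 0$ whenever $\nu>1$. This weaker decay is still ample for the boundary terms $f'\bar f$ and $|f|^2/x$ to vanish at the tip, so both of your integration-by-parts identities remain valid as stated.
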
  
\begin{proof}
The discreteness of $L_{\nu}(\A)$ is asserted in [BS2], see also [L, Theorem 1.1] where this result is restated. For semi-boundedness of $L_{\nu}(\A)$ note that the potential $(\nu^2-1/4)/x^2$ is positive. Hence it suffices to discuss semi-boundedness of $-d^2/dx^2$ under different boundary conditions. By [W2, Theorem 8.24] all the self-adjoint extensions of $$-\frac{d^2}{dx^2}:C^{\infty}_0(0,1)\to C^{\infty}_0(0,1)$$ are bounded from below, since $-d^2/dx^2$ on $C^{\infty}_0(0,1)$ is semi-bounded. Indeed for any $f\in C^{\infty}_0(0,1)$ $$\langle -f'', f\rangle_{L^2(0,1)}=-\left. f'(x)\overline{f(x)}\right|_0^1+\int_0^1|f'(x)|^2dx\geq 0.$$ Hence $L_{\nu}(\A)$ is indeed semi-bounded. It remains to identify the lower bound in the case $\A^2< \nu^2$ and $\A=\infty$. For this consider any $f \in \dom (l_{\nu, \max}), f\not \equiv 0$. Recall $\dom (l_{\nu, \max}) \subset H^2_{loc}(0,1]$, which implies that $f$ is continuously differentiable at $(0,1)$ and $f,f'$ extend continuously to $x=1$. Moreover we infer from [BV1, Proposition 2.10 (iii)] the asymptotic behaviour $f(x)=O(x^{3/2}), f'(x)=O(x^{1/2})$, as $x\to 0$. We compute via integration by parts for any $\epsilon \in \R$ with $\epsilon^2 < \nu^2$:
\begin{align*}
\langle l_{\nu} f, f \rangle_{L^2(0,1)}=\int_0^1\left[ \left(-\frac{d}{dx}+\frac{\epsilon -1/2}{x}\right)\left(\frac{d}{dx}+\frac{\epsilon -1/2}{x}\right)f(x) + \right.\\ + \left. \frac{\nu^2-\epsilon^2}{x^2}f(x)\right]\cdot \overline{f(x)}dx = -\left.\left(f'(x)+\frac{\epsilon-1/2}{x}f(x)\right)\overline{f(x)}\, \right|_{x\to 0+}^1+\\+ \left\|f'+\frac{\epsilon-1/2}{x}f\right\|^2_{L^2(0,1)}+ \left\|\frac{\sqrt{\nu^2-\epsilon^2}}{x}f\right\|^2_{L^2(0,1)}.
\end{align*}
Now the asymptotics of $f(x)$ and $f'(x)$ as $x\to 0$ implies together with $f\not \equiv 0$:
\begin{align}\label{formel1}
\langle l_{\nu} f, f \rangle_{L^2(0,1)} > -\overline{f(1)}\cdot (f'(1)+(\epsilon-1/2)f(1)).
\end{align}
Evaluation of the conditions at $x=1$ for $L_{\nu}(\A)$ with $\A=\infty$ or $\A^2 < \nu^2$ proves the statement. 
\end{proof}
\begin{cor}\label{bessel-zeros}
Let $J_{\nu}(z)$ denote the Bessel function of first kind and put for any fixed $\A \in \R^*$ $$\widetilde{J}^{\A}_{\nu}(z):=\A J_{\nu}(z)+z J'_{\nu}(z),$$
where for $\A=\infty$ we put $\widetilde{J}^{\A}_{\nu}(z):=J_{\nu}(z)$. Then for $\nu >1$ and $\A=\infty$ or $\A^2 < \nu^2$, the zeros of $\widetilde{J}^{\A}_{\nu}(z)$ are real, discrete and symmetric about the origin. The eigenvalues of the positive operator $L_{\nu}(\A)$ are simple and given by squares of positive zeros of $\widetilde{J}^{\A}_{\nu}(z)$, i.e.
$$\textup{Spec}L_{\nu}(\A)=\{\mu^2 | \widetilde{J}^{\A}_{\nu}(\mu)=0, \mu >0\}.$$
\end{cor}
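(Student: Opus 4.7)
The plan is to reduce the eigenvalue equation for $L_\nu(\A)$ to an explicit transcendental condition on the spectral parameter and then read off the listed properties from that reduction combined with Proposition \ref{bded-below}.

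First I would solve the ODE $l_\nu f = \mu^2 f$ on $(0,1)$ explicitly. A fundamental system is given by $\sqrt{x}\,J_\nu(\mu x)$ and $\sqrt{x}\,Y_\nu(\mu x)$, and from the standard asymptotics $J_\nu(z)\sim (z/2)^\nu/\Gamma(\nu+1)$ and $Y_\nu(z)\sim -\Gamma(\nu)(z/2)^{-\nu}/\pi$ as $z\to 0$, together with $\nu>1$, only $\sqrt{x}\,J_\nu(\mu x)$ belongs to $L^2$ near $x=0$. Since $\nu>1$ forces the limit point case, any element of $\dom(l_{\nu,\max})$ that solves the ODE is a scalar multiple of $f(x):=\sqrt{x}\,J_\nu(\mu x)$. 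Computing $f(1)=J_\nu(\mu)$ and $f'(1)=\tfrac{1}{2}J_\nu(\mu)+\mu J'_\nu(\mu)$ and substituting into the boundary condition $(\A-1/2)^{-1}f'(1)+f(1)=0$, the $\tfrac{1}{2}$ contribution cancels and the condition reduces exactly to $\widetilde J^{\A}_\nu(\mu)=0$; the Dirichlet case $\A=\infty$ gives $J_\nu(\mu)=0$ by direct inspection. This proves the claimed spectral identification, and simplicity of the eigenvalues follows at once since each eigenspace is one-dimensional.

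Next I would extract reality, symmetry, and discreteness of zeros from the representation
\[
\widetilde J^{\A}_\nu(z) \;=\; 2^{-\nu}\,z^\nu\bigl[(\A+\nu)\,g(z^2)+2z^2 g'(z^2)\bigr],
\]
where $g$ is the entire function given by $J_\nu(z)=2^{-\nu}z^\nu g(z^2)$. The bracketed factor is entire in $z^2$, so its non-zero zeros in $\C$ come in pairs $\pm\mu$, giving the symmetry. For reality, suppose $\mu\in\C\setminus\{0\}$ is a zero of $\widetilde J^{\A}_\nu$; interpreting $\sqrt{x}\,J_\nu(\mu x)$ through its convergent power series yields an honest element of $\dom(L_\nu(\A))$ satisfying $L_\nu(\A)f=\mu^2 f$. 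Since $L_\nu(\A)$ is positive by Proposition \ref{bded-below} in the cases $\A=\infty$ and $\A^2<\nu^2$, we must have $\mu^2>0$, hence $\mu\in\R\setminus\{0\}$. Discreteness then follows either from $\widetilde J^{\A}_\nu$ being a nonzero entire function or from the discreteness of $\mathrm{Spec}\,L_\nu(\A)$ already asserted in Proposition \ref{bded-below}.

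The only mild subtlety I anticipate is justifying that every complex zero of $\widetilde J^{\A}_\nu$ actually produces an eigenfunction of $L_\nu(\A)$ rather than a merely formal solution; this amounts to verifying that the series representation of $\sqrt{x}\,J_\nu(\mu x)$ is $L^2$ near $x=0$ for arbitrary $\mu\in\C$ and satisfies the boundary condition at $x=1$, both of which follow directly from the entire structure of $z^{-\nu}J_\nu(z)$. Once that is in hand, positivity of $L_\nu(\A)$ does all the remaining work and no deeper machinery is required.
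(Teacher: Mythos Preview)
Your proposal is correct and follows essentially the same approach as the paper: solve the ODE explicitly, use the limit point case at $x=0$ to eliminate the $Y_\nu$ solution, match the boundary condition at $x=1$ to obtain $\widetilde J^{\A}_\nu(\mu)=0$, and then invoke Proposition~\ref{bded-below} for reality and discreteness while reading off symmetry from the power-series structure of $J_\nu$. Your treatment is slightly more explicit than the paper's (in particular the $z^\nu g(z^2)$ factorization and the check that a complex zero would still yield an honest $L^2$ eigenfunction), but there is no substantive difference in strategy.
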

\begin{proof}
The general solution to $l_{\nu}f=\mu^2f, \mu\neq 0$ is of the following form
$$f(x)=c_1\sqrt{x}J_{\nu}(\mu x)+c_2\sqrt{x}Y_{\nu}(\mu x),$$
where $c_1,c_2$ are constants and $J_{\nu}, Y_{\nu}$ denote Bessel functions of first and second kind, respectively. For $\nu >1$ the asymptotic behaviour of $f\in \dom (l_{\nu, \max})$ is given by $f(x)=O(x^{3/2}), x\to 0$. Hence a solution to $l_{\nu}f=\mu^2f, \mu \neq 0$ with $f\in \dom (l_{\nu, \max})$ must be of the form $$f(x)=c_1\sqrt{x}J_{\nu}(\mu x).$$
Taking in account the boundary conditions for $L_{\nu}(\A)$ with at $\A=\infty$ or $\A^2 < \nu^2$, we deduce correspondence between zeros of $\widetilde{J}^{\A}_{\nu}(z)$ and eigenvalues of $L_{\nu}(\A)$. Hence by Proposition \ref{bded-below} we deduce the statements about the zeros of $\widetilde{J}^{\A}_{\nu}(z)$, up to the statement on the symmetry of zeros, which follows simply from the standard infinite series representation of Bessel functions. 
\\[3mm] Furthermore, $J_{\nu}(-\mu x)=(-1)^{\nu}J_{\nu}(\mu x), \mu \neq 0$ and hence each eigenvalue $\mu^2$ of $L_{\nu}(\A)$ is simple with the unique (up to a multiplicative constant) eigenfunction $f(x)=\sqrt{x}J_{\nu}(\mu x), \mu >0$ This completes the proof. 
\end{proof}\ \\
\\[-7mm] Similar statements can be deduced for more general values of $\A\in \R^*$, but are not relevant in the present discussion. Finally note as a direct application of Proposition \ref{bded-below} that the Laplacians $\triangle^{\psi}_{0,2,rel}$ and $\triangle^{\phi}_{0,2,rel}$, introduced in the previous section, are positive.
\begin{cor}\label{G-lower-bound}
For all degrees $k=0,..,\dim N$ we have 
\begin{align*}
D(k)\subset \R^+, \ N_i(k)\subset \R^+, \ i=1,2.
\end{align*}
\end{cor}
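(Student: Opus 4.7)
The plan is to reduce each of the Laplacians $\triangle_{0,rel}^{\psi}$, $\triangle_{2,rel}^{\psi}$, $\triangle_{0,rel}^{\phi}$, $\triangle_{2,rel}^{\phi}$ to a self-adjoint extension $L_{\nu}(\A)$ of the model operator $l_{\nu}$ treated in Section \ref{auxiliary}, and then to verify that the parameter $(\nu,\A)$ of each of them falls under the positivity range of Proposition \ref{bded-below}, namely $\A=\infty$ or $\A^2<\nu^2$.

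First, reading off the potential from \eqref{psi-laplacians} and \eqref{phi-laplacians}, each of the four Laplacians coincides with $l_{\nu}$ for
\[
\nu^2=\eta+\left(k+\tfrac{1}{2}-\tfrac{n}{2}\right)^2.
\]
By the scaling assumption \eqref{scaling} we have $\eta>1$, hence $\nu>1$, and the hypotheses of Section \ref{auxiliary} are satisfied. For the two Dirichlet-type operators $\triangle_{0,rel}^{\psi}$ and $\triangle_{0,rel}^{\phi}$, the explicit boundary conditions established earlier are simply $f(1)=0$, which corresponds to $\A=\infty$; Proposition \ref{bded-below} then yields positivity, proving $D(k)\subset \R^+$.

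Second, for the two generalized Neumann operators I have to identify the corresponding value of $\A$ by matching the boundary condition at $x=1$ with the normal form $(\A-1/2)^{-1}f'(1)+f(1)=0$, i.e.\ $f'(1)+(\A-1/2)f(1)=0$. For $\triangle_{2,rel}^{\psi}$ the condition $(-1)^kf'(1)+c_{k+1}f(1)=0$ with $c_{k+1}=(-1)^{k+1}(k+1-n/2)$ rearranges to $f'(1)+(n/2-k-1)f(1)=0$, giving $\A=-\bigl(k+\tfrac12-\tfrac{n}{2}\bigr)$. Analogously for $\triangle_{2,rel}^{\phi}$ the condition $(-1)^{n-k+1}f'(1)+c_{n-k}f(1)=0$ rearranges to $\A=k+\tfrac12-\tfrac{n}{2}$. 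In both cases
\[
\A^2=\left(k+\tfrac{1}{2}-\tfrac{n}{2}\right)^2<\eta+\left(k+\tfrac{1}{2}-\tfrac{n}{2}\right)^2=\nu^2,
\]
since $\eta>0$, so Proposition \ref{bded-below} applies and yields positivity of $\triangle_{2,rel}^{\psi}$ and $\triangle_{2,rel}^{\phi}$, proving $N_1(k),N_2(k)\subset \R^+$.

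The only non-routine step is the bookkeeping in matching the two generalized Neumann conditions to the canonical form of $L_\nu(\A)$ and verifying the strict inequality $\A^2<\nu^2$; once the sign of $c_{k+1}$ and $c_{n-k}$ is handled consistently, the inequality is automatic from the positivity $\eta>0$, so no further analytic input is required beyond Proposition \ref{bded-below}.
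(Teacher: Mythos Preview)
Your proof is correct and follows exactly the approach indicated in the paper, which simply states that the corollary is a direct application of Proposition \ref{bded-below} to the Laplacians $\triangle^{\psi}_{0,2,rel}$ and $\triangle^{\phi}_{0,2,rel}$. You have merely made explicit the identification of the parameters $(\nu,\A)$ and the verification $\A^2<\nu^2$ that the paper leaves implicit (and later carries out in the proof of Proposition \ref{N-prop}).
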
\ \\
\\[-7mm] Next consider the zeta-function $\zeta (s, L_{\nu}(\A)), \A\in \R^*$ associated to the self-adjoint realization $L_{\nu}(\A)$ of $l_{\nu}$. It is well-known, see [L, Theorem 1.1] that the zeta-function extends meromorphically to $\C$ with the analytic representation given by the Mellin transform of the heat trace:
$$\zeta(s,L_{\nu}(\A))= \frac{1}{\Gamma(s)}\int_0^{\infty}t^{s-1}\textup{Tr}_{L^2}(e^{-tL_{\nu}(\A)}P)dt,$$ 
where $P$ is the projection on the orthogonal complement of the null space of $L_{\nu}(\A)$. The heat operator $\exp (-tL_{\nu}(\A))$ is defined by the spectral theorem and is a bounded smoothing operator with finite trace $\textup{Tr}_{L^2}(e^{-tL_{\nu}(\A)}P)$ of standard polylogarithmic asymptotics as $t \to 0+$, see [Ch, Theorem 2.1]. We can write for $t > 0$ $$\textup{Tr}(e^{-tL_{\nu}(\A)})=\frac{1}{2\pi i}\int_{\Lambda}e^{-\lambda t}\textup{Tr}(\lambda-L_{\nu}(\A))^{-1}d\lambda,$$ where the contour $\Lambda$ shall encircle all non-zero eigenvalues of the semi-bounded $L_{\nu}(\A), \A \in \R^*$ and be counter-clockwise oriented, in analogy to Figure \ref{contour-spreafico} below.
\\[3mm] Now, following [S], we obtain an integral representation for $\zeta(s,L_{\nu}(\A))$ in a computationally convenient form. Introduce a numbering $(\lambda_n)$ of the eigenvalues of $L_{\nu}(\A)$ and observe $$\textup{Tr}(\lambda-L_{\nu}(\A))^{-1}=\sum_n\frac{1}{\lambda -\lambda_n}=\sum_n \frac{d}{d\lambda}\log \left(1-\frac{\lambda}{\lambda_n}\right),$$ where we fix henceforth the branch of logarithm in $\C\backslash \R^+$ with $0\leq Im \log z < 2\pi$. We continue with this branch of logarithm throughout the section. Integrating now by parts first in $\lambda$, then in $t$ we obtain
\begin{align}\label{integral-spreafico}
\zeta(s,L_{\nu}(\A))=\frac{s^2}{\Gamma(s+1)}\int_0^{\infty}t^{s-1}\frac{1}{2\pi i}\int_{\Lambda}\frac{e^{-\lambda t}}{-\lambda}\left[-\sum_n \log \left(1-\frac{\lambda}{\lambda_n}\right)\right] d\lambda dt.
\end{align}

%*********************************************************
\section{Contribution from the Subcomplexes I} \
%*********************************************************
\\[-3mm] We continue in the setting and in the notation of Section \ref{symmetry-decomposition} and fix any degree $k\leq \dim N-1$. We define the following contour:
\begin{align}\label{l-c-2}
\Lambda_c:=\{\lambda \in \C| |\textup{arg}(\lambda -c)|=\pi /4\}
\end{align}
oriented counter-clockwise, with $c>0$ a fixed positive number, smaller than the lowest non-zero eigenvalue of $\triangle^{rel}_*$. The contour is visualized in the Figure \ref{contour-spreafico}: 
\begin{figure}[h]
\begin{center}
\includegraphics[width=0.5\textwidth]{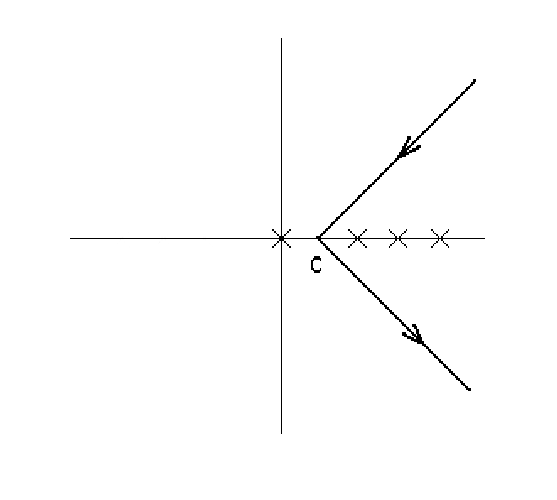}
\caption{The contour $\Lambda_c$. The $\times$'s represent the eigenvalues of $\triangle^{rel}_*$.}
\label{contour-spreafico}
\end{center}
\end{figure}
\\[-2mm] In analogy to the constructions of [S] we obtain for the zeta-functions $\zeta^k_D(s),\zeta^k_{N_1}(s),\zeta^k_{N_2}(s)$ the following results.
\begin{prop}\label{D-prop}
Let $M=(0,1]\times N, g^M=dx^2\oplus x^2g^N$ be a bounded generalized cone. Let the metric on the base manifold $N$ be scaled as in \eqref{scaling} such that the non-zero eigenvalues of the form-valued Laplacians on $N$ are bigger than $1$. Denote by $\triangle_{k,ccl,N}$ the Laplace Operator on coclosed k-forms on $N$. Let $$F_k:= \{\xi \in \R^+ \ | \ \xi^2=\eta +\left(k+1/2-n/2\right)^2, \eta \in \textup{Spec}\triangle_{k,ccl,N}\backslash\{0\} \}.$$ Then we obtain with $(j_{\nu,i})_{i\in \N}$ being the positive zeros of the Bessel function $J_{\nu}(z)$
\begin{align}
\zeta^k_D(s)=\frac{s^2}{\Gamma(s+1)}\int_0^{\infty}t^{s-1}\frac{1}{2\pi i}\int_{\wedge_c}\frac{e^{-\lambda t}}{-\lambda}T_D^k(s,\lambda)d\lambda dt,
\end{align}
\begin{align}
T_D^k(s,\lambda)=\sum_{\nu \in F_k}t_{\nu}^{D,k}(\lambda)\, \nu^{-2s}, \quad t_{\nu}^{D,k}(\lambda)=-\sum_{i=1}^{\infty}\log\left(1-\frac{{\nu}^2\lambda}{j_{\nu,i}^2}\right).
\end{align}
\end{prop}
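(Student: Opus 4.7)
The plan is to identify the eigenvalues contributing to $\zeta_D^k(s)$ as squares of Bessel zeros, apply the Spreafico integral representation \eqref{integral-spreafico} to each parameter value separately, and then rescale to package the sum into the stated form.

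First, by \eqref{psi-laplacians} the Laplacian $\triangle_0^{\psi}$ associated to a nonzero coclosed $\eta$-eigenform $\psi$ of $\triangle_{k,N}$ is of the form $l_{\nu}$ from Section \ref{auxiliary} with $\nu^2 = \eta + (k+1/2-n/2)^2$; the scaling assumption \eqref{scaling} forces $\eta > 1$ and hence $\nu > 1$. Under the relative boundary conditions this operator becomes precisely $L_{\nu}(\infty)$, since the computation of $\dom(\triangle_{0,rel}^{\psi})$ in Section \ref{symmetry-decomposition} yields the pure Dirichlet condition $f(1) = 0$ at $x=1$. Corollary \ref{bessel-zeros} then gives $\textup{Spec}\, L_{\nu}(\infty) = \{j_{\nu,i}^2 : i \in \N\}$, and collecting the contributions over all coclosed eigenvalues $\eta$ (with multiplicities), one obtains the disjoint-union description
$$D(k) = \bigsqcup_{\nu \in F_k} \{\, j_{\nu,i}^2 : i \in \N \,\},$$
so that $\zeta_D^k(s) = \sum_{\nu \in F_k} \zeta(s, L_{\nu}(\infty))$ for $\textup{Re}(s)$ sufficiently large.

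Next I insert the Spreafico formula \eqref{integral-spreafico} for each $\zeta(s, L_{\nu}(\infty))$ and perform the change of variables $\lambda \mapsto \nu^2 \lambda$, $t \mapsto t/\nu^2$. The factor $e^{-\lambda t}$ and the differential $d\lambda/\lambda$ remain invariant, while $t^{s-1} dt$ picks up a factor $\nu^{-2s}$; the argument of the logarithm becomes $1 - \nu^2 \lambda / j_{\nu,n}^2$, so the bracket is precisely $t_{\nu}^{D,k}(\lambda)$. A homotopy of the rescaled contour back to $\Lambda_c$ is permissible because the singularities of the integrand in the new variable sit at $j_{\nu,n}^2/\nu^2$, which exceed $1$ uniformly by the classical bound $j_{\nu,1} > \nu$ for $\nu > 0$; choosing $c < 1$ accordingly keeps $\Lambda_c$ between the origin and all these poles. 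This yields
$$\zeta(s, L_{\nu}(\infty)) = \nu^{-2s}\frac{s^2}{\Gamma(s+1)}\int_0^{\infty} t^{s-1}\frac{1}{2\pi i}\int_{\Lambda_c}\frac{e^{-\lambda t}}{-\lambda}\, t_{\nu}^{D,k}(\lambda)\, d\lambda\, dt.$$

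Finally I sum over $\nu \in F_k$. For $\textup{Re}(s)$ large, $\sum_{\nu \in F_k} \nu^{-2s}$ converges by Weyl's law applied to $\triangle_{k,ccl,N}$, and $t_{\nu}^{D,k}(\lambda)$ grows at most polynomially in $\nu$ for $\lambda$ on the bounded portion of $\Lambda_c$ while $e^{-\lambda t}$ enforces rapid decay on the unbounded tails. Dominated convergence therefore permits interchanging the $\nu$-summation with both the $\lambda$- and $t$-integrals, producing the stated identity with $T_D^k(s,\lambda) = \sum_{\nu \in F_k} t_{\nu}^{D,k}(\lambda)\, \nu^{-2s}$. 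The principal technical obstacle will be the uniform-in-$\nu$ bookkeeping required to justify this interchange and the contour deformation simultaneously: one needs $j_{\nu,1}/\nu$ bounded strictly below away from $1$ (or at least $\geq 1$) over $F_k$, and uniform polynomial bounds on $t_{\nu}^{D,k}(\lambda)$ along $\Lambda_c$, both of which follow from the classical uniform asymptotics of $J_{\nu}$ for large index and argument.
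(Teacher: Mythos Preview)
Your argument is correct and follows essentially the same route as the paper: identify $\triangle_{0,rel}^{\psi(\eta)}$ with $L_{\nu}(\infty)$ via the scaling assumption, read off the spectrum as $\{j_{\nu,i}^2\}$ from Corollary \ref{bessel-zeros}, and then pass to the Spreafico integral representation. The only cosmetic difference is that the paper first rewrites the double series as $\sum_{\nu}\nu^{-2s}\sum_i (j_{\nu,i}/\nu)^{-2s}$ and applies \eqref{integral-spreafico} to the inner sum directly, whereas you apply \eqref{integral-spreafico} to the unscaled eigenvalues and then perform the substitution $(\lambda,t)\mapsto(\nu^2\lambda,t/\nu^2)$ in the integral; the two are manifestly equivalent, and your contour discussion via $j_{\nu,1}>\nu$ makes explicit what the paper leaves implicit in the phrase ``similar computations as for \eqref{integral-spreafico}''.
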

\begin{proof}
Consider for $\eta \in \textup{Spec}\triangle_{k, ccl, N}\backslash \{0\}$ the operators $\triangle_0^{\psi(\eta)}$ and $\triangle_0^{\phi(\eta)}$, defined in \eqref{psi-laplacians} and \eqref{phi-laplacians}. Under the identification with their scalar parts we have
$$\triangle_0^{\phi(\eta)}= \triangle_0^{\psi(\eta)}=-\partial_x^2+\frac{1}{x^2}\left[\nu^2-\frac{1}{4}\right],$$
where $\nu:=\sqrt{\eta+(k+1/2-n/2)^2}$. By scaling of the metric on $N$ we have $\nu >1$ and hence the self-adjoint extensions $\triangle_{0,rel}^{\phi(\eta)}$ and $\triangle_{0,rel}^{\psi(\eta)}$ are determined only by their Dirichlet boundary conditions at $x=1$. By Corollary \ref{bessel-zeros} we obtain:
$$\zeta^k_D(s)=\sum_{\nu \in F_k}\sum_{i=1}^{\infty}j_{\nu ,i}^{-2s}=\sum_{\nu \in F_k}\nu^{-2s}\sum_{i=1}^{\infty}\left(\frac{j_{\nu,i}}{\nu}\right)^{-2s}, \quad Re(s)\gg0,$$ where $j_{\nu,i}$ are the positive zeros of $J_{\nu}(z)$. This series is well-defined for $Re(s)$ large by Theorem \ref{cone-zeta-function}, since $\triangle_{0,rel}^{\phi(\eta)}$($\equiv \triangle_{0,rel}^{\psi(\eta)}$) as direct sum components of $\triangle^{rel}_*$ have the same spectrum as $\triangle^{rel}_*$, but with lower multiplicities in general.
\\[3mm] Due to the uniform convergence of integrals and series we obtain with similar computations as for \eqref{integral-spreafico} an integral representation for this sum:
\begin{align}
\zeta_D^k(s)=\frac{s^2}{\Gamma(s+1)}\int_0^{\infty}t^{s-1}\frac{1}{2\pi i}\int_{\wedge_c}\frac{e^{-\lambda t}}{-\lambda}T_D^k(s,\lambda)d\lambda dt,
\end{align}
\begin{align}
T_D^k(s,\lambda)=\sum_{\nu \in F_k}t_{\nu}^{D,k}(\lambda)\, \nu^{-2s}, \quad t_{\nu}^{D,k}(\lambda)=-\sum_{i=1}^{\infty}\log\left(1-\frac{{\nu}^2\lambda}{j_{\nu,i}^2}\right).
\end{align}
Note that the contour $\Lambda_c$, defined in \eqref{l-c-2} encircles all eigenvalues of $\triangle_{0,rel}^{\phi(\eta)}\equiv \triangle_{0,rel}^{\psi(\eta)}$ by construction, since the operators are positive by Corollary \ref{G-lower-bound}.
\end{proof}
\begin{prop}\label{N-prop}
Let $M=(0,1]\times N, g^M=dx^2\oplus x^2g^N$ be a bounded generalized cone. Let the metric on the base manifold $N$ be scaled as in \eqref{scaling} such that the non-zero eigenvalues of the form-valued Laplacians on $N$ are bigger than $1$. Denote by $\triangle_{k,ccl,N}$ the Laplace Operator on coclosed k-forms on $N$. Let $$F_k:=\{\xi \in \R^+ \ | \ \xi^2=\eta +\left(k+1/2-n/2\right)^2, \eta \in \textup{Spec}\triangle_{k,ccl,N}\backslash\{0\} \}.$$ Then we obtain for $l=1,2$
\begin{align}
\zeta^k_{N_l}(s)=\frac{s^2}{\Gamma(s+1)}\int_0^{\infty}t^{s-1}\frac{1}{2\pi i}\int_{\wedge_c}\frac{e^{-\lambda t}}{-\lambda}T_{N_l}^k(s,\lambda)d\lambda dt,
\end{align}
\begin{align}
T_{N_l}^k(s,\lambda)=\sum_{\nu \in F_k}t_{\nu}^{N_l,k}(\lambda)\, \nu^{-2s}, \quad t_{\nu}^{N_l,k}(\lambda)=-\sum_{i=1}^{\infty}\log\left(1-\frac{{\nu}^2\lambda}{\widetilde{j}_{\nu,l,i}^2}\right),
\end{align}
where $(\widetilde{j}_{\nu,l,i})_{i\in \N}$ are the positive zeros of $\widetilde{J}_{\nu}^{N_l,k}(z)$ for $l=1,2$. The functions $\widetilde{J}_{\nu}^{N_l}(z)$ are defined as follows
\begin{align*}
\widetilde{J}_{\nu}^{N_1,k}(z):=\left(\frac{1}{2}+(-1)^kc_{k+1}\right)J_{\nu}(z)+z J_{\nu}'(z),  \\ \widetilde{J}_{\nu}^{N_2,k}(z):=\left(\frac{1}{2}+(-1)^kc_{k}\right)J_{\nu}(z)+z J_{\nu}'(z).
\end{align*}
\end{prop}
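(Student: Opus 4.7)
The plan is to mirror the proof of Proposition \ref{D-prop}, with the Dirichlet condition at $x=1$ replaced by the two distinct generalized Neumann conditions inherited from $\triangle_{2,rel}^{\psi(\eta)}$ and $\triangle_{2,rel}^{\phi(\eta)}$, and then to identify each such operator with a member of the one-parameter family $L_\nu(\A)$ from Section \ref{auxiliary}. The first step is to recast the relative boundary conditions recorded in Section \ref{symmetry-decomposition} into the normalized form $f'(1) + (\A - 1/2) f(1) = 0$ used to define $L_\nu(\A)$. Multiplying $(-1)^k f'(1) + c_{k+1} f(1) = 0$ by $(-1)^k$ gives $\A_1 = 1/2 + (-1)^k c_{k+1}$, and multiplying $(-1)^{n-k+1} f'(1) + c_{n-k} f(1) = 0$ by $(-1)^{n-k+1}$ gives, after using $c_{n-k} = (-1)^{n-k}(n-k-n/2)$, the value $\A_2 = 1/2 + (-1)^k c_k$; these are precisely the constants appearing in $\widetilde{J}_\nu^{N_1,k}(z)$ and $\widetilde{J}_\nu^{N_2,k}(z)$.

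Next I would verify the hypothesis $\A^2 < \nu^2$ required by Corollary \ref{bessel-zeros}. Using $c_j = (-1)^j(j-n/2)$, a short computation yields $\A_1 = -(k+1/2-n/2)$ and $\A_2 = k+1/2-n/2$, so in both cases
\begin{align*}
\A_l^2 = (k+1/2-n/2)^2 < \eta + (k+1/2-n/2)^2 = \nu^2,
\end{align*}
the strict inequality following from the scaling assumption $\eta > 1$. Combined with Corollary \ref{G-lower-bound}, which gives positivity of $\triangle_{2,rel}^{\psi(\eta)} \equiv L_\nu(\A_1)$ and $\triangle_{2,rel}^{\phi(\eta)} \equiv L_\nu(\A_2)$, Corollary \ref{bessel-zeros} then identifies the spectra of these operators as the squares of the positive zeros $\widetilde{j}_{\nu,l,i}$ of $\widetilde{J}_\nu^{N_l,k}(z)$. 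For $Re(s)$ sufficiently large, this yields
\begin{align*}
\zeta^k_{N_l}(s) = \sum_{\nu \in F_k} \nu^{-2s} \sum_{i=1}^\infty \left( \frac{\widetilde{j}_{\nu,l,i}}{\nu} \right)^{-2s},
\end{align*}
with convergence guaranteed by Theorem \ref{cone-zeta-function}, since these operators appear as direct summands of $\triangle^{rel}_*$ with multiplicities no larger than those of $\triangle^{rel}_*$.

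Finally, I would reproduce the Spreafico-type manipulation leading to \eqref{integral-spreafico}: express the inner sum as a contour integral of the resolvent around $\Lambda_c$ via $\sum_n(\lambda-\lambda_n)^{-1} = \sum_n \tfrac{d}{d\lambda}\log(1-\lambda/\lambda_n)$, use the positivity established above so that $\Lambda_c$ does enclose the full spectrum of $L_\nu(\A_l)$, and integrate by parts first in $\lambda$ and then in $t$. Uniform convergence of the Bessel product series permits the interchange of the sum over $\nu \in F_k$ with the contour integral and the Mellin transform, producing the claimed representation. The main obstacle is the sign bookkeeping needed to match the two distinct Neumann-type boundary conditions to the normalized parameter $\A$ of Section \ref{auxiliary}, together with the verification that $\A_l$ stays in the admissible range $\A_l^2 < \nu^2$ uniformly in the eigenvalue parameter $\eta$; once these identifications are in place, the remainder of the argument is formally identical to the Dirichlet case of Proposition \ref{D-prop}.
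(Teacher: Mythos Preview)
Your proposal is correct and follows essentially the same route as the paper: you identify $\triangle_{2,rel}^{\psi(\eta)}$ and $\triangle_{2,rel}^{\phi(\eta)}$ with $L_\nu(\A_1)$ and $L_\nu(\A_2)$ via the same sign manipulation (the paper records the equivalent identity $(-1)^{n-k+1}c_{n-k}=(-1)^kc_k$), verify $\A_l^2=(k+1/2-n/2)^2<\nu^2$ exactly as the paper does, invoke Corollary \ref{bessel-zeros} to read off the spectra, and then repeat the Spreafico contour-integral derivation of \eqref{integral-spreafico}. There is no substantive difference in method or in the order of steps.
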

\begin{proof}
Consider for $\eta \in \textup{Spec}\triangle_{k, ccl, N}\backslash \{0\}$ the operators $\triangle_2^{\psi(\eta)}$ and $\triangle_2^{\phi(\eta)}$, defined in \eqref{psi-laplacians} and \eqref{phi-laplacians}, which contribute to the zeta-functions $\zeta^k_{N_1}(s)$ and $\zeta^k_{N_2}(s)$ correspondingly. Under the identification with their scalar parts we have
$$\triangle_2^{\phi(\eta)}= \triangle_2^{\psi(\eta)}=-\partial_x^2+\frac{1}{x^2}\left[\nu^2-\frac{1}{4}\right],$$
where $\nu:=\sqrt{\eta+(k+1/2-n/2)^2}$. By scaling of the metric on $N$ we have $\nu >1$ and hence the self-adjoint extensions $\triangle_{2,rel}^{\phi(\eta)}$ and $\triangle_{2,rel}^{\psi(\eta)}$ are determined only by their generalized Neumann boundary conditions at $x=1$. Recall
\begin{align*}
\mathcal{D}(\triangle_{2,rel}^{\psi})=\{f\in \mathcal{D}(\triangle_{2,\max}^{\psi})|f'(1)+(-1)^kc_{k+1}f(1)=0\}, \\
\mathcal{D}(\triangle_{2,rel}^{\phi})=\{f\in \mathcal{D}(\triangle_{2,\max}^{\phi})|f'(1)+(-1)^{n-k+1}c_{n-k}f(1)=0\}.
\end{align*}
Observe $(-1)^{n-k+1}c_{n-k}=(-1)^kc_k$ and put
\begin{align*}
\widetilde{J}_{\nu}^{N_1,k}(\mu):=\left(\frac{1}{2}+(-1)^kc_{k+1}\right)J_{\nu}(\mu)+\mu J_{\nu}'(\mu), \\ \widetilde{J}_{\nu}^{N_2,k}(\mu):=\left(\frac{1}{2}+(-1)^kc_{k}\right)J_{\nu}(\mu)+\mu J_{\nu}'(\mu).
\end{align*}
Note for any degree $k$ and any $\nu \in F_k$ $$\left|\frac{1}{2}+(-1)^kc_{k+1}\right|= \left|\frac{1}{2}+(-1)^kc_{k}\right|=\left|\frac{n}{2}-\frac{1}{2}-k\right|<\nu.$$
Hence by Corollary \ref{bessel-zeros} we obtain for $l=1,2$:
$$\zeta^k_{N_l}(s)=\sum_{\nu \in F_k}\sum_{i=1}^{\infty}\widetilde{j}_{\nu ,l,i}^{-2s}=\sum_{\nu \in F_k}\nu^{-2s}\sum_{i=1}^{\infty}\left(\frac{\widetilde{j}_{\nu,l,i}}{\nu}\right)^{-2s}, \quad Re(s)\gg0,$$ where $\widetilde{j}_{\nu,l,i}$ are the positive zeros of $\widetilde{J}_{\nu}^{N_l,k}(z)$ for $l=1,2$. This series is well-defined for $Re(s)$ large by Theorem \ref{cone-zeta-function}, since $\triangle_{2,rel}^{\phi(\eta)}, \triangle_{2,rel}^{\psi(\eta)}$ as direct sum components of $\triangle^{rel}_*$ have the same spectrum as $\triangle^{rel}_*$, but with lower multiplicities in general. 
\\[3mm] Due to the uniform convergence of integrals and series we obtain with similar computations as for \eqref{integral-spreafico} an integral representation for this sum:
\begin{align}
\zeta^k_{N_l}(s)=\frac{s^2}{\Gamma(s+1)}\int_0^{\infty}t^{s-1}\frac{1}{2\pi i}\int_{\wedge_c}\frac{e^{-\lambda t}}{-\lambda}T_{N_l}^k(s,\lambda)d\lambda dt,
\end{align}
\begin{align}
T^k_{N_l}(s,\lambda)=\sum_{\nu \in F_k}t_{\nu}^{N_l,k}(\lambda)\, \nu^{-2s}, \quad t_{\nu}^{N_l,k}(\lambda)=-\sum_{i=1}^{\infty}\log\left(1-\frac{{\nu}^2\lambda}{\widetilde{j}_{\nu,l,i}^2}\right).
\end{align}
Note that the contour $\Lambda_c$ encircles all the possible eigenvalues of $\triangle_{2,rel}^{\phi(\eta)}, \triangle_{2,rel}^{\psi(\eta)}$ by construction, since the operators are positive by Corollary \ref{G-lower-bound}.
\end{proof}  
\begin{cor}\label{contribution3}
Let $M=(0,1]\times N, g^M=dx^2\oplus x^2g^N$ be a bounded generalized cone. Let the metric on $N$ be scaled as in \eqref{scaling} such that the non-zero eigenvalues of the form-valued Laplacians on $N$ are bigger than $1$. Then we obtain with Definition \ref{zeta} in the notation of Propositions \ref{D-prop} and \ref{N-prop}
\begin{align}\label{integral-representation}
\zeta_k(s)= \frac{s^2}{\Gamma(s+1)}\int_0^{\infty}t^{s-1}\frac{1}{2\pi i}\int_{\wedge_c}\frac{e^{-\lambda t}}{-\lambda}T^k(s,\lambda)d\lambda dt,
\end{align}
$$
T^k(s,\lambda):=\!\sum_{\nu \in F_k}t^k_{\nu}(\lambda)\, \nu^{-2s},$$ $$t^k_{\nu}(\lambda):=\left[t_{\nu}^{N_1,k}(\lambda)-t_{\nu}^{D,k}(\lambda))+(-1)^{n-1}(t_{\nu}^{N_2,k}(\lambda)-t_{\nu}^{D,k}(\lambda))\right].
$$
If $\dim M$ is odd we obtain with $z:=\sqrt{-\lambda}$ and $\A_k:=n/2-1/2-k$
\begin{align*}
t^k_{\nu}(\lambda)=&\left[ -\log (\A_k I_{\nu}(\nu z) +\nu z I'_{\nu}(\nu z))+\log \left( 1+ \frac{ \A_k}{\nu}\right) + \right.\\ & \left. + \log (- \A_k I_{\nu}(\nu z) +\nu z I'_{\nu}(\nu z))-\log \left( 1- \frac{ \A_k}{\nu}\right)\right].
\end{align*}
For $\dim M$ even we compute with $z:=\sqrt{-\lambda}$
\begin{align*}
t^k_{\nu}(\lambda)=& \left[-\log (\A_k I_{\nu}(\nu z) +\nu z I'_{\nu}(\nu z))+\log \left( 1+ \frac{ \A_k}{\nu}\right) - \right.\\ & - \log (- \A_k I_{\nu}(\nu z) +\nu z I'_{\nu}(\nu z))+\log \left( 1- \frac{ \A_k}{\nu}\right)+ \\ & \left. + 2\log (I_{\nu}(\nu z))+2\log \nu \right].
\end{align*}
\end{cor}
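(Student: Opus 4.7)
The plan is to assemble the integral representation of $\zeta_k(s)$ by inserting Definition \ref{zeta} into the representations of $\zeta^k_D$, $\zeta^k_{N_1}$, $\zeta^k_{N_2}$ from Propositions \ref{D-prop} and \ref{N-prop}, and then to rewrite the building blocks $t^{D,k}_\nu(\lambda)$ and $t^{N_l,k}_\nu(\lambda)$ in closed form via Hadamard factorization. The first displayed formula is immediate from linearity, with
$$t^k_\nu(\lambda) = \bigl(t^{N_1,k}_\nu(\lambda) - t^{D,k}_\nu(\lambda)\bigr) + (-1)^{n-1}\bigl(t^{N_2,k}_\nu(\lambda) - t^{D,k}_\nu(\lambda)\bigr).$$
Everything that remains is algebraic manipulation of these three functions.

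The key input is the Weierstrass product for $J_\nu$ and for its deformations $\tilde J^{N_l,k}_\nu$. Using the small-argument expansions $J_\nu(w) \sim (w/2)^\nu/\Gamma(\nu+1)$ and $w J'_\nu(w) \sim \nu (w/2)^\nu/\Gamma(\nu+1)$ together with $c_k=(-1)^k(k-n/2)$, one verifies $1/2+(-1)^k c_{k+1}=\A_k$ and $1/2+(-1)^k c_k=-\A_k$. Corollary \ref{bessel-zeros}, whose hypothesis $|\A|<\nu$ is met because of the scaling \eqref{scaling}, then yields the factorizations
$$J_\nu(w) = \frac{(w/2)^\nu}{\Gamma(\nu+1)}\prod_{i=1}^\infty\Bigl(1-\frac{w^2}{j_{\nu,i}^2}\Bigr), \qquad \tilde J^{N_l,k}_\nu(w) = \frac{(\nu\pm\A_k)(w/2)^\nu}{\Gamma(\nu+1)}\prod_{i=1}^\infty\Bigl(1-\frac{w^2}{\tilde j_{\nu,l,i}^2}\Bigr),$$
with $\pm=+$ for $l=1$ and $\pm=-$ for $l=2$. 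Evaluating both sides at $w=i\nu z$ with $z:=\sqrt{-\lambda}$ and invoking $J_\nu(iu)=i^\nu I_\nu(u)$, which also implies $wJ'_\nu(w)\big|_{w=i\nu z}=i^\nu\nu z I'_\nu(\nu z)$, the factors transform via $1-(i\nu z)^2/j^2 = 1-\nu^2\lambda/j^2$. Taking logarithms and solving produces
\begin{align*}
t^{D,k}_\nu(\lambda) &= -\log\Gamma(\nu+1)+\nu\log(\nu z/2)-\log I_\nu(\nu z), \\
t^{N_l,k}_\nu(\lambda) &= -\log\Gamma(\nu+1)+\nu\log(\nu z/2)+\log(\nu\pm\A_k)-\log\bigl(\pm\A_k I_\nu(\nu z)+\nu z I'_\nu(\nu z)\bigr).
\end{align*}

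For $\dim M=n+1$ odd, $(-1)^{n-1}=-1$ and hence $t^k_\nu=t^{N_1,k}_\nu-t^{N_2,k}_\nu$; the Dirichlet, $\log\Gamma(\nu+1)$ and $\nu\log(\nu z/2)$ contributions all cancel, and writing $\log(\nu\pm\A_k)=\log\nu+\log(1\pm\A_k/\nu)$ lets the $\log\nu$ terms cancel as well, yielding the odd-dimensional formula. For $\dim M$ even, $(-1)^{n-1}=+1$ and $t^k_\nu=t^{N_1,k}_\nu+t^{N_2,k}_\nu-2\,t^{D,k}_\nu$; the $\log\Gamma$ and $\nu\log(\nu z/2)$ pieces again drop out, the Dirichlet term contributes $+2\log I_\nu(\nu z)$, and two copies of $\log\nu$ from $\log(\nu\pm\A_k)$ add to $+2\log\nu$, producing the even-dimensional formula. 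The main obstacle is justifying the Weierstrass factorization for $\tilde J^{N_l,k}_\nu$: reality, simplicity, and symmetry of the zero set are supplied by Corollary \ref{bessel-zeros} (whose hypothesis holds thanks to the scaling), while the genus-one growth needed for the product to converge without exponential correction comes from the standard large-argument asymptotics of $J_\nu$ and $J'_\nu$. With the factorization in hand, the remainder is careful bookkeeping with the fixed logarithm branch on $\C\setminus\R^+$.
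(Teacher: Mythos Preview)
Your proof is correct and reaches the same intermediate formulas for $t^{D,k}_\nu(\lambda)$ and $t^{N_l,k}_\nu(\lambda)$ as the paper, after which the odd/even bookkeeping is identical. The difference lies in how the infinite product for $\A I_\nu(z)+zI'_\nu(z)$ is obtained. The paper does not appeal to Hadamard factorization directly; instead it invokes the machinery of regular-singular Sturm--Liouville determinants from [L]: one identifies $L_\nu(\A)$, computes $\det_\zeta(L_\nu(\A))$ explicitly, and then uses the product formula $\det_\zeta(L_\nu(\A)+z^2)=\det_\zeta(L_\nu(\A))\prod_n(1+z^2/\lambda_n)$ together with the Wronskian representation $\det_\zeta(L_\nu(\A)+z^2)=W(\phi;\psi)/(\A+\nu)$ from [L, Theorem 1.2, Proposition 4.6] to read off the product expansion \eqref{prod-Bessel}. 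Your route is more elementary in that it stays within classical complex analysis: $w^{-\nu}\widetilde J^{N_l,k}_\nu(w)$ is an even entire function of order $1$ (hence of order $1/2$ and genus $0$ as a function of $w^2$), so the canonical product carries no exponential factor and the leading coefficient is fixed by the small-argument asymptotics you wrote down. The paper's approach has the advantage of remaining within the spectral framework already in use and of sidestepping any growth estimate, while yours avoids importing the determinant results from [L] and makes the argument self-contained. One small terminological quibble: what you call ``genus-one growth'' is really the observation that the relevant even entire function has genus zero in $w^2$, which is precisely why no exponential correction appears.
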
 
\begin{proof} Recall for convenience the definition of $\zeta_k(s)$ in Definition \ref{zeta} $$\zeta_k(s):=\zeta^k_{N_1}(s)-\zeta^k_{D} (s))+(-1)^{n-1}(\zeta^k_{N_2}(s)-\zeta^k_{D}(s)).$$ The integral representation and the definition of $t_{\nu}^k(\lambda)$ are then a direct consequence of Propositions \ref{D-prop} and \ref{N-prop}. It remains to present $t_{\nu}^k(\lambda)$ in terms of special functions.
\\[3mm] In order to simplify notation we put (recall $c_j:=(-1)^j(j-n/2)$)
\begin{align*}
\A_k:=\frac{1}{2}+(-1)^kc_{k+1}=\frac{n}{2}-\frac{1}{2}-k=-\left(\frac{1}{2}+(-1)^kc_{k}\right).\end{align*}
Now we present $t^{D,k}_{\nu}(\lambda)$ and $t^{N_l,k}_{\nu}(\lambda),l=1,2$ in terms of special functions.
This can be done by referring to tables of Bessel functions in [GRA] or [AS]. However in the context of the paper it is more appropriate to derive the presentation from results on zeta-regularized determinants. Here we follow the approach of [L, Section 4.2] in a slightly different setting. 
\\[3mm] The original setting of [L, (4.22)] provides an infinite product representation for $I_{\nu}(z)$. We apply its approach in order to derive the corresponding result for $\widetilde{I}_{\nu}^N(z):=\A I_{\nu}(z)+zI'_{\nu}(z)$, with $\A \in \{\pm \A_k\}$ and $\nu \in F_k$. 
\\[3mm] Consider now the following regular-singular Sturm-Liouville operator and its self-adjoint extension with $\A \in \{\pm \A_k\}$ and $\nu \in F_k$
$$l_{\nu}:=-\frac{d^2}{dx^2}+\frac{1}{x^2}\left(\nu^2-\frac{1}{4}\right):C^{\infty}_0(0,1)\to C^{\infty}_0(0,1),$$ $$ \mathcal{D}(L_{\nu}(\A)):=\{f\in\mathcal{D}(l_{\nu, \max})|f'(1)+(\A-1/2)f(1)=0\}.
$$ \ \\
Note we have $\A^2<\nu^2$ by construction and in particular $\A \neq -\nu$. Thus we find by Proposition \ref{bded-below} that $\ker L_{\nu}(\A)=\{0\}$ and
\begin{align}\label{hilfe-referenz}
\det\nolimits_{\zeta}(L_{\nu}(\A))=\sqrt{2\pi}\frac{\A+\nu}{2^{\nu}\Gamma(\nu+1)}.
\end{align}
Denote by $\phi(x,z),\psi(x,z)$ the solutions of $(l_{\nu}+z^2)f=0$, normalized in the sense of [L, (1.38a), (1.38b)] at $x=0$ and $x=1$, respectively. The general solution to $(l_{\nu}+z^2)f=0$ is of the following form $$f(x)=c_1\sqrt{x}I_{\nu}(zx)+c_2\sqrt{x}K_{\nu}(zx).$$ Applying the normalizing conditions of [L, (1.38a), (1.38b)] we obtain straightforwardly
\begin{align*}
&\psi(1,z)=1, \quad \psi'(1,z)=1/2 -\A, \\ &\phi(1,z)=2^{\nu}\Gamma(\nu+1)z^{-\nu}I_{\nu}(z) \ \textnormal{with} \ \phi(1,0)=1, \\ 
&\phi'(1,z)=2^{\nu}\Gamma(\nu+1)z^{-\nu}(I_{\nu}(z)\cdot 1/2+zI'_{\nu}(z))\ \textnormal{with} \ \phi'(1,0)=\nu +1/2.
\end{align*} 
Finally by [L, Proposition 4.6] we obtain with $\{\lambda_n\}_{n\in \N}$ being a counting of the eigenvalues of $L_{\nu}(\A)$:
\begin{align}\label{product}
\det\nolimits_{\zeta}(L_{\nu}(\A)+z^2)=\det\nolimits_{\zeta}(L_{\nu}(\A))\cdot \prod_{n=1}^{\infty}\left(1+\frac{z^2}{\lambda_n}\right).
\end{align}
Since $\ker L_{\nu}(\A)=\{0\}$, for all $n\in \N$ we have $\lambda_n\neq 0$.
Denote the positive zeros of $\widetilde{J}_{\nu}^N(z):=\A J_{\nu}(z)+zJ'_{\nu}(z)$ by $(\widetilde{j}_{\nu,i})_{i\in \N}$. Note in the notation of Proposition \ref{N-prop} that for $\A=\A_k$, $\widetilde{J}_{\nu}^N(z)=\widetilde{J}_{\nu}^{N_1,k}(z)$ and for $\A=-\A_k$, $\widetilde{J}_{\nu}^N(z)=\widetilde{J}_{\nu}^{N_2,k}(z)$. Observe by Corollary \ref{bessel-zeros}:
$$\textnormal{Spec}(L_{\nu}(\A))=\{\widetilde{j}_{\nu,i}^2|i\in \N\}.$$ Using the product formula \eqref{product} and [L, Theorem 1.2] applied to $L_{\nu}(\A)+z^2$, we compute in view of \eqref{hilfe-referenz}
\begin{align}\nonumber
\prod_{i=1}^{\infty}\left(1+\frac{z^2}{\widetilde{j}^2_{\nu,i}}\right)=\frac{W(\phi(\cdot, z);\psi(\cdot , z))}{\A+\nu}=\frac{2^{\nu}\Gamma(\nu)}{z^{\nu}(1+\A/\nu)}(\A I_{\nu}(z)+zI'_{\nu}(z)) \\ \label{prod-Bessel} \Rightarrow \quad \widetilde{I}_{\nu}^N(z)\equiv \A I_{\nu}(z)+zI'_{\nu}(z)=\frac{z^{\nu}}{2^{\nu}\Gamma(\nu)}\left(1+\frac{\A}{\nu}\right)\prod_{i=1}^{\infty}\left(1+\frac{z^2}{\widetilde{j}_{\nu,i}^2}\right).
\end{align}
The original computations of [L, (4.22)] provide an analogous result for $I_{\nu}(z)$ $$I_{\nu}(z)=\frac{z^{\nu}}{2^{\nu}\Gamma(\nu +1)}\prod_{i=1}^{\infty}\left(1+\frac{z^2}{j_{\nu,i}^2}\right),$$ where $j_{\nu,i}$ are the positive zeros of $J_{\nu}(z)$. Finally in view of the series representations for $t^{D,k}_{\nu}(\lambda)$ and $t^{N_l,k}_{\nu}(\lambda),l=1,2$ derived in Propositions \ref{D-prop} and \ref{N-prop} we obtain with $z=\sqrt{-\lambda}$ 
\begin{align}\label{t-D-special}
t^{D,k}_{\nu}(\lambda)=-\log I_{\nu}(\nu z) +\log \left( \frac{(\nu z)^{\nu}}{2^{\nu}\Gamma (\nu+1)}\right), \\
\label{t-N-special}t^{N_l,k}_{\nu}(\lambda)=-\log (\A_l I_{\nu}(\nu z) +\nu z I'_{\nu}(\nu z))+\log \left( \frac{(\nu z)^{\nu}}{2^{\nu}\Gamma (\nu)}\left(1 +\frac{ \A_l}{\nu}\right)\right),
\end{align}
where $\A_l=\A_k$ if $l=1$ and $\A_l=-\A_k$ if $l=2$. Putting together these two results we obtain with Definition \ref{zeta} the statement of the corollary.
\end{proof} \ \\
\\[-7mm] Now we turn to the discussion of $T^k(s,\lambda)$. For this we introduce the following zeta-function for $Re(s)$ large: $$\zeta_{k,N}(s):=\sum_{\nu \in F_k}\nu^{-s}=\sum_{\nu \in F_k}(\nu^2)^{-s/2},$$ where $\nu \in F_k$ are counted with their multiplicities and the second equality is clear, since $\nu\in F_k$ are positive. Recall that $\nu \in F_k$ solves $$\nu^2=\eta +(k+1/2-n/2)^2, \ \eta \in \textup{Spec}\triangle_{k,ccl,N}\backslash \{0\}$$ and hence $\zeta_{k,N}(2s)$ is simply the zeta-function of $\triangle_{k,ccl,N}+(k+1/2-n/2)^2$. By standard theory $\zeta(2s)$ extends (note that $\zeta(2s)$ can be presented by an alternating sum of zeta functions of $\triangle_{j,N}+(k+1/2-n/2)^2, j=0,..,k$) to a meromorphic function with possible simple poles at the usual locations $\{(n-p)/2 | p \in \N\}$ and $s=0$ being a regular point. Thus the $1/\nu^r$ dependence in $t^k_{\nu}(\lambda)$ causes a non-analytic behaviour of $T^k(s,\lambda)$ at $s=0$ for $r=1,..,n$, since $$\sum_{\nu \in F_k}\nu^{-2s}\frac{1}{\nu^r}=\zeta_{k,N}(2s+r)$$ possesses possibly a pole at $s=0$. Therefore the first $n=\dim N$ leading terms in the asymptotic expansion of $t^k_{\nu}(\lambda)$ for large orders $\nu$ are to be removed. We put
\begin{align}\label{p}
t^k_{\nu}(\lambda)=:p^k_{\nu}(\lambda)+\sum_{r=1}^n\frac{1}{\nu^r}f^k_r(\lambda), \quad P^k(s,\lambda):=\sum_{\nu >1}p^k_{\nu}(\lambda)\, \nu^{-2s}.
\end{align}
In order to get explicit expressions for $f^k_r(\lambda)$ we need following expansions of Bessel-functions for large order $\nu$, see [O, Section 9]:
\begin{align*}
I_{\nu}(\nu z) \sim \frac{1}{\sqrt{2\pi \nu}}\frac{e^{\nu \eta}}{(1+z^2)^{1/4}} \left[1+\sum_{r=1}^{\infty}\frac{u_r(t)}{\nu^r} \right], \\
I'_{\nu}(\nu z) \sim \frac{1}{\sqrt{2\pi \nu}} \frac{e^{\nu \eta}}{z (1+z^2)^{-1/4}}\left[1+\sum_{r=1}^{\infty}\frac{v_r(t)}{\nu^r} \right],
\end{align*} 
where we put $z:=\sqrt{-\lambda}, t:=(1+z^2)^{-1/2}$ and $\eta:=1/t+\log (z/(1+1/t))$. Recall that $\lambda \in \Lambda_c$, defined in \eqref{l-c-2}. The induced $z=\sqrt{-\lambda}$ is contained in $\{z\in \C||\textup{arg}(z)|<\pi /2\}\cup \{ix|x\in (-1,1)\}$. This is precisely the region of validity for these asymptotic expansions, determined in [O, (7.18)].
\\[3mm] Same expansions are quoted in [BKD, Section 3]. In particular we have as in [BKD, (3.15)] the following expansion in terms of orders
\begin{align}\label{polynom1}
&\log \left[ 1+\sum_{r=1}^{\infty}\frac{u_r(t)}{\nu^r} \right] \sim \sum_{r=1}^{\infty}\frac{D_r(t)}{\nu^r}, \\ \label{polynom2}
&\log \left[ \left(1+\sum_{k=1}^{\infty}\frac{v_r(t)}{\nu^r} \right)\pm \frac{\A_k}{\nu}t\left(1+\sum_{r=1}^{\infty}\frac{u_r(t)}{\nu^r}\right)\right] \sim \sum_{r=1}^{\infty}\frac{M_r(t, \pm \A_k)}{\nu^r},
\end{align}
where $D_r(t)$ and $M_r(t,\pm \A_k)$ are polynomial in $t$. Using these series representations we prove the following result.
\begin{lemma}\label{f-r}
For $\dim M$ being odd we have with $z:=\sqrt{-\lambda}$, $t:=(1+z^2)^{-1/2}=1/\sqrt{1-\lambda}$ and $\A_k=n/2-1/2-k$
$$f^k_r(\lambda)=M_r(t,-\A_k)-M_r(t,+\A_k)+(-1)^{r+1}\frac{\A_k^r-(-\A_k)^r}{r}.$$ For $\dim M$ being even we have in the same notation $$f^k_r(\lambda)=-M_r(t,-\A_k)-M_r(t,+\A_k)+2D_r(t) +(-1)^{r+1}\frac{\A_k^r+(-\A_k)^r}{r}.$$ 
\end{lemma}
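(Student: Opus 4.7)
The goal is to read off the coefficient of $\nu^{-r}$ in the large-order asymptotic expansion of $t^k_\nu(\lambda)$, after having separated out the terms which survive as $\nu\to\infty$ into the regular remainder $p^k_\nu(\lambda)$. The plan is to insert the Olver expansions into the explicit formulas for $t^k_\nu(\lambda)$ from Corollary \ref{contribution3} and then track the cancellations carefully.

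First I would rewrite the basic building blocks. Factoring $(1+z^2)^{1/4}\,\nu(2\pi\nu)^{-1/2}e^{\nu\eta}$ out of $\pm\A_k I_\nu(\nu z)+\nu z I'_\nu(\nu z)$ and using $(1+z^2)^{-1/2}=t$ gives
\begin{equation*}
\pm\A_k I_\nu(\nu z)+\nu zI'_\nu(\nu z)\sim \frac{\nu(1+z^2)^{1/4}}{\sqrt{2\pi\nu}}\,e^{\nu\eta}\left[1+\sum_{r\ge 1}\frac{v_r(t)}{\nu^r}\pm\frac{\A_k t}{\nu}\Bigl(1+\sum_{r\ge 1}\frac{u_r(t)}{\nu^r}\Bigr)\right].
\end{equation*}
Taking logarithms and applying \eqref{polynom2} produces
\begin{equation*}
\log\bigl(\pm\A_k I_\nu(\nu z)+\nu z I'_\nu(\nu z)\bigr)\sim \log\nu+\tfrac14\log(1+z^2)-\tfrac12\log(2\pi\nu)+\nu\eta+\sum_{r\ge 1}\frac{M_r(t,\pm\A_k)}{\nu^r},
\end{equation*}
and analogously \eqref{polynom1} yields $\log I_\nu(\nu z)\sim\nu\eta-\tfrac12\log(2\pi\nu)-\tfrac14\log(1+z^2)+\sum_r D_r(t)/\nu^r$. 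The elementary Taylor series $\log(1\pm\A_k/\nu)=\sum_{r\ge1}(-1)^{r+1}(\pm\A_k)^r/(r\nu^r)$ handles the remaining pieces.

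Second, I would substitute these into the odd-dimensional formula for $t^k_\nu(\lambda)$ in Corollary \ref{contribution3}. Because $\log(\A_k I_\nu+\nu z I'_\nu)$ appears with a minus sign while $\log(-\A_k I_\nu+\nu z I'_\nu)$ appears with a plus sign, all the identical prefactor contributions $\log\nu+\tfrac14\log(1+z^2)-\tfrac12\log(2\pi\nu)+\nu\eta$ cancel cleanly, leaving
\begin{equation*}
t^k_\nu(\lambda)\sim\sum_{r\ge 1}\frac{1}{\nu^r}\left[M_r(t,-\A_k)-M_r(t,+\A_k)+(-1)^{r+1}\frac{\A_k^r-(-\A_k)^r}{r}\right],
\end{equation*}
and comparing with \eqref{p} identifies $f^k_r(\lambda)$.

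For the even case the two Bessel-function logs enter with the same sign, so their prefactor contributions do not cancel but accumulate to $-2\log\nu-\tfrac12\log(1+z^2)+\log(2\pi\nu)-2\nu\eta$. The additional $2\log I_\nu(\nu z)+2\log\nu$ in Corollary \ref{contribution3} precisely cancels the $-2\nu\eta$, the $\log(2\pi\nu)$, the $-2\log\nu$, and supplies another $-\tfrac12\log(1+z^2)$ together with $2\sum_r D_r(t)/\nu^r$. The surviving $\nu$-independent residue $-\log(1+z^2)$ is absorbed into $p^k_\nu(\lambda)$, and collecting the $1/\nu^r$ coefficient gives
\begin{equation*}
f^k_r(\lambda)=2D_r(t)-M_r(t,+\A_k)-M_r(t,-\A_k)+(-1)^{r+1}\frac{\A_k^r+(-\A_k)^r}{r},
\end{equation*}
as claimed. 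The main bookkeeping obstacle is verifying these cancellations of the $\nu\eta$, $\log\nu$, and $\log(2\pi\nu)$ prefactors and confirming that the only $O(1)$ residue in the even case is $-\log(1+z^2)$, which is why it legitimately goes into $p^k_\nu(\lambda)$ rather than into any $f^k_r$; everything else reduces to uniform convergence of the Olver expansion on the region traced by $z=\sqrt{-\lambda}$ as $\lambda$ runs over $\Lambda_c$, which has been noted earlier in the excerpt.
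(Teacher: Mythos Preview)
Your proof is correct and follows essentially the same route as the paper: insert the Olver large-order expansions into the explicit formula for $t^k_\nu(\lambda)$ from Corollary \ref{contribution3}, take logarithms via \eqref{polynom1}--\eqref{polynom2}, expand $\log(1\pm\A_k/\nu)$ as a power series, and read off the coefficient of $\nu^{-r}$. Your bookkeeping of the prefactor cancellations in the even case is in fact more explicit than the paper's, and your $\nu$-independent residue $-\log(1+z^2)$ differs from the paper's $\log(\lambda/(\lambda-1))$ because the paper carries an extra $1/z$ in its prefactor (it does not cancel the $z$ coming from the factor $\nu z$ in $\nu z I'_\nu(\nu z)$); your version is the correct one, but since this residue is absorbed into $p^k_\nu(\lambda)$ and plays no role in the identification of $f^k_r(\lambda)$, the lemma is unaffected either way.
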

\begin{proof}
We get by the series representation \eqref{polynom1} and \eqref{polynom2} the following expansions for large orders $\nu$: 
\begin{align*}
\log (\pm \A_k I_{\nu}(\nu z) +\nu z I'_{\nu}(\nu z)) &\sim \log \left( \frac{\nu}{\sqrt{2\pi \nu}} \frac{e^{\nu \eta}}{z (1+z^2)^{-1/4}} \right)+\sum_{r=1}^{\infty}\frac{M_r(t, \pm \A_k)}{\nu^r}, \\
\log (I_{\nu}(\nu z)) &\sim \log \left( \frac{1}{\sqrt{2\pi \nu}} \frac{e^{\nu \eta}}{ (1+z^2)^{1/4}} \right)+\sum_{r=1}^{\infty}\frac{D_r(t)}{\nu^r}.
\end{align*}
Furthermore, with $\nu > |\A_k|$ for $\nu \in F_k$ we obtain 
$$\log (1\pm \frac{\A_k}{\nu})=\sum_{r=1}^{\infty}(-1)^{r+1}\frac{(\pm \A_k)^r}{r\nu^r}.$$ Hence in total we obtain an expansion for $t_{\nu}^k(\lambda)$ in terms of orders $\nu$:
\begin{align*}
t_{\nu}^k(\lambda)\sim \sum_{r=1}^{\infty}\frac{1}{\nu^r}\left(M_r(t,-\A_k)-M_r(t,+\A_k)+(-1)^{r+1}\frac{\A_k^r-(-\A_k)^r}{r}\right), \\ \textup{for $\dim M$ odd}, \\
t_{\nu}^k(\lambda)\sim \sum_{r=1}^{\infty}\frac{1}{\nu^r}\!\left(\! 2D_r(t)-M_r(t,-\A_k)-M_r(t,+\A_k)+(-1)^{r+1}\frac{\A_k^r+(-\A_k)^r}{r}\right) \\ + \log \left( \frac{\lambda}{\lambda -1}\right), \quad \textup{for $\dim M$ even}. 
\end{align*}
From here the explicit result for $f_r^k(\lambda)$ follows by its definition.
\end{proof} \ 
\\[-1mm] From the integral representation \eqref{integral-representation} we find that the singular behaviour enters the zeta-function in form of 
\begin{align*}
\sum_{r=1}^n\frac{s^2}{\Gamma(s+1)}\zeta_{k,N}(2s+r)\int_0^{\infty}t^{s-1}\frac{1}{2\pi i}\int_{\wedge_c}\frac{e^{-\lambda t}}{-\lambda}f^k_r(\lambda)d\lambda dt.
\end{align*}
We compute explicitly this contribution coming from $f^k_r(\lambda)$ in terms of the polynomial structure of $M_r$ and $D_r$. It can be derived from \eqref{polynom1} and \eqref{polynom2}, see also [BKD, (3.7), (3.16)], that the polynomial structure of $M_r$ and $D_r$ is given by
\begin{align*}
D_r(t)=\sum_{b=0}^{r}x_{r,b}t^{r+2b},\quad M_r(t, \pm \A_k)=\sum_{b=0}^{r}z_{r,b}(\pm \A_k)t^{r+2b}.
\end{align*}
\begin{lemma} \label{ff} 
For $\dim M$ odd we obtain
\begin{align*}
\int_0^{\infty}t^{s-1}\frac{1}{2\pi i}\int_{\wedge_c}\frac{e^{-\lambda t}}{-\lambda}f^k_r(\lambda)d\lambda dt =\\= \sum_{b=0}^{r}(z_{r,b}(-\A_k)-z_{r,b}(\A_k))\frac{\Gamma(s+b+r/2)}{s\Gamma (b+r/2)}.
\end{align*}
For $\dim M$ even we obtain
\begin{align*}
\int_0^{\infty}t^{s-1}\frac{1}{2\pi i}\int_{\wedge_c}\frac{e^{-\lambda t}}{-\lambda}f^k_r(\lambda)d\lambda dt =\\= \sum_{b=0}^{r}(2x_{r,b}-z_{r,b}(-\A_k)-z_{r,b}(\A_k))\frac{\Gamma(s+b+r/2)}{s\Gamma (b+r/2)}.
\end{align*}
\end{lemma}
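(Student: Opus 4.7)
The plan is to use Lemma \ref{f-r} to write $f^k_r(\lambda)$ explicitly as a finite linear combination of $(1-\lambda)^{-\alpha}$ with $\alpha = b + r/2$, plus a $\lambda$-independent constant, and then evaluate the double integral on each of these pieces separately. Writing $D_r(t) = \sum_b x_{r,b}t^{r+2b}$ and $M_r(t,\pm\A_k) = \sum_b z_{r,b}(\pm\A_k)t^{r+2b}$ with $t = (1-\lambda)^{-1/2}$, the monomial $t^{r+2b}$ becomes $(1-\lambda)^{-(b+r/2)}$; the resulting coefficients in $f^k_r(\lambda)$ are precisely $z_{r,b}(-\A_k)-z_{r,b}(\A_k)$ in the odd case and $2x_{r,b}-z_{r,b}(-\A_k)-z_{r,b}(\A_k)$ in the even case, as required.

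Next I would dispose of the $\lambda$-independent constant piece. Since $c>0$, the only singularity of $e^{-\lambda u}/(-\lambda)$ is $\lambda = 0$, which lies outside $\Lambda_c$; closing the contour at infinity in the right half-plane (where $e^{-\lambda u}$ decays) therefore gives a closed loop enclosing no pole, so the inner contour integral of any $\lambda$-independent constant vanishes.

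The core computation is the basic integral
\[
I_\alpha(s) := \int_0^{\infty}u^{s-1}\,\frac{1}{2\pi i}\int_{\Lambda_c}\frac{e^{-\lambda u}}{-\lambda}(1-\lambda)^{-\alpha}\,d\lambda\,du, \qquad \alpha = b+r/2 > 0.
\]
For $\textup{Re}(s)$ in a suitable strip I would apply Fubini and evaluate the $u$-Laplace transform first, obtaining $\int_0^{\infty}u^{s-1}e^{-\lambda u}du = \Gamma(s)\lambda^{-s}$, which is valid because $\textup{Re}(\lambda) > 0$ along $\Lambda_c$. This reduces $I_\alpha(s)$ to $-\Gamma(s)/(2\pi i)$ times a single contour integral of $\lambda^{-s-1}(1-\lambda)^{-\alpha}$. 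Since this integrand has only the branch locus $[1,\infty)$ inside $\Lambda_c$, I would deform $\Lambda_c$ to a Hankel contour wrapping $[1,\infty)$. The jump of $(1-\lambda)^{-\alpha}$ across the cut is $2i\sin(\pi\alpha)(x-1)^{-\alpha}$, so the deformed contour integral collapses to $\pm 2i\sin(\pi\alpha)\int_1^\infty x^{-s-1}(x-1)^{-\alpha}dx$; for integer $\alpha$, where $\sin(\pi\alpha) = 0$, the same formula arises instead from a residue at $\lambda = 1$, consistent via analytic continuation in $\alpha$.

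The substitution $x = 1/y$ identifies the remaining real integral with the beta integral $B(s+\alpha,1-\alpha) = \Gamma(s+\alpha)\Gamma(1-\alpha)/\Gamma(s+1)$, and Euler's reflection formula $\sin(\pi\alpha)\Gamma(1-\alpha) = \pi/\Gamma(\alpha)$ collapses the result to $I_\alpha(s) = \Gamma(s+\alpha)/(s\Gamma(\alpha))$. Multiplying by the coefficients extracted above and summing over $b$ yields both formulas of the lemma. I expect the main obstacle to be the careful orientation and branch-cut analysis in the Hankel deformation: one must check that the stated counter-clockwise orientation of $\Lambda_c$ around its enclosed region, together with the chosen branch of $(1-\lambda)^{-\alpha}$, produces precisely $+\Gamma(s+\alpha)/(s\Gamma(\alpha))$, and that the non-integer formula glues correctly to the integer case where the contribution reduces to a residue rather than a cut discontinuity.
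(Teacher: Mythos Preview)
Your approach is correct and reaches the same key identity $I_\alpha(s)=\Gamma(s+\alpha)/(s\,\Gamma(\alpha))$, but it proceeds in the opposite order from the paper. The paper evaluates the $\lambda$-integral first: after the substitution $x=\lambda-1$ it invokes the tabulated representation [GRA, 8.353.3] to obtain
\[
\frac{1}{2\pi i}\int_{\Lambda_c}\frac{e^{-\lambda t}}{-\lambda}(1-\lambda)^{-a}\,d\lambda=\frac{1}{\pi}\sin(\pi a)\,\Gamma(1-a)\,\Gamma(a,t),
\]
and then uses the Mellin transform $\int_0^\infty t^{s-1}\Gamma(a,t)\,dt=\Gamma(s+a)/s$ together with Euler's reflection formula. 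You instead swap the order via Fubini, compute the $t$-integral as $\Gamma(s)\lambda^{-s}$, and then deform $\Lambda_c$ to a Hankel contour around $[1,\infty)$ to reduce to a Beta integral. Both routes land on the same expression after reflection; the paper's version is shorter because it outsources the contour step to a table, while yours is more self-contained but requires the branch-cut and orientation bookkeeping you flag at the end. The treatment of the $\lambda$-independent constant (zero contribution since $\lambda=0$ lies outside $\Lambda_c$) is identical in both.
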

\begin{proof} Observe from [GRA, 8.353.3] by substituting the new variable $x=\lambda-1$, with $a>0$:
\begin{align*}
\frac{1}{2\pi i}\int_{\wedge_c}\frac{e^{-\lambda t}}{-\lambda}\frac{1}{(1-\lambda)^a}d\lambda= \frac{1}{2\pi i}e^{-t}\!\!\int_{\wedge_{c-1}}-\frac{e^{-x t}}{x+1}\frac{1}{(-x)^{a}}dx =\\=\frac{1}{\pi}\sin(\pi a)\Gamma(1-a)\Gamma(a,t).
\end{align*}
Using now the relation between the incomplete Gamma function and the probability integral
$$\int_0^{\infty}t^{s-1}\Gamma(a,t)dt=\frac{\Gamma(s+a)}{s}$$ we obtain 
\begin{align*}
\int_0^{\infty}t^{s-1}\frac{1}{2\pi i}\int_{\wedge_c}\frac{e^{-\lambda t}}{-\lambda}\frac{1}{(1-\lambda)^a}d\lambda dt = \\ \frac{1}{\pi}\sin \left(\pi a\right)\Gamma (1-a)\frac{\Gamma(s+a)}{s}= \frac{\Gamma (s+a)}{s\Gamma (a)}.
\end{align*}
Further note for $t>0$ $$\frac{1}{2\pi i}\int_{\wedge_c}\frac{e^{-\lambda t}}{-\lambda}d\lambda=0,$$
since the contour $\Lambda_c$ does not encircle the pole $\lambda=0$ of the integrand. Hence the $\lambda -$independent part of $f_r^k(\lambda)$ vanishes after integration. The statement is now a direct consequence of Lemma \ref{f-r}. 
\end{proof}
\ \\
\\[-8mm] Next we derive asymptotics of $p^k_{\nu}(\lambda):=t^k_{\nu}(\lambda)-\sum_{r=1}^n\frac{1}{\nu^r}f^k_r(\lambda)$ for large arguments $\lambda$ and fixed order $\nu$
\begin{prop}\label{AB} For large arguments $\lambda$ and fixed order $\nu$ we have the following asymptotics
\begin{align*}
p^k_{\nu}(\lambda)=a^k_{\nu}\log (-\lambda)+b^k_{\nu}+O\left((-\lambda)^{-1/2}\right),
\end{align*}
where for $\dim M$ odd 
$$a^k_{\nu}=0, \quad b^k_{\nu}=\left((\log \left(1+\frac{\A_k}{\nu}\right)-\log \left(1-\frac{\A_k}{\nu}\right)- \sum_{r=1}^n(-1)^{r+1}\frac{\A_k^r-(-\A_k)^r}{r\nu^r}\right),$$ and for $\dim M$ even
$$a^k_{\nu}=-1, \quad b^k_{\nu}=\left(\log \left(1+\frac{\A_k}{\nu}\right)+\log \left(1-\frac{\A_k}{\nu}\right) - \sum_{r=1}^n(-1)^{r+1}\frac{\A_k^r+(-\A_k)^r}{r\nu^r}\right).$$
\end{prop}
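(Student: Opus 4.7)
The plan is to plug the explicit expressions for $t^k_{\nu}(\lambda)$ from Corollary \ref{contribution3} into the definition $p^k_{\nu}(\lambda)=t^k_{\nu}(\lambda)-\sum_{r=1}^n \nu^{-r} f^k_r(\lambda)$ and then use the standard large-argument asymptotics of modified Bessel functions,
\begin{equation*}
I_{\nu}(w)=\frac{e^w}{\sqrt{2\pi w}}\bigl(1+O(1/w)\bigr),\qquad I'_{\nu}(w)=\frac{e^w}{\sqrt{2\pi w}}\bigl(1+O(1/w)\bigr),\qquad w\to\infty,
\end{equation*}
applied with $w=\nu z$, $z=\sqrt{-\lambda}$. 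Since the contour $\Lambda_c$ sends $|\arg \lambda|\to\pi/4$ at infinity, the induced $z$ satisfies $|\arg z|\to 3\pi/8$, which lies safely inside the sector of validity for these expansions.

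First I treat the odd case. Writing $\pm\A_k I_{\nu}(\nu z)+\nu z I'_{\nu}(\nu z)=\nu z I'_{\nu}(\nu z)\bigl(1\pm\A_k I_{\nu}(\nu z)/(\nu z I'_{\nu}(\nu z))\bigr)$ and using $I_{\nu}/I'_{\nu}\to 1$, one finds $\log(\pm\A_k I_{\nu}(\nu z)+\nu z I'_{\nu}(\nu z))=\nu z+\tfrac12\log(\nu z/(2\pi))+O(1/z)$, so the two terms $-\log(\A_k I_{\nu}+\nu zI'_{\nu})+\log(-\A_k I_{\nu}+\nu zI'_{\nu})$ cancel through order $1$ and contribute only $O(1/z)=O((-\lambda)^{-1/2})$. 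This proves $a^k_{\nu}=0$. For the constant $b^k_{\nu}$, I note that in the formula of Lemma \ref{f-r} the polynomial pieces $M_r(t,\pm\A_k)=\sum_{b=0}^r z_{r,b}(\pm\A_k)t^{r+2b}$ vanish as $t=(1-\lambda)^{-1/2}\to 0$, so the only surviving piece of $\sum_{r=1}^n \nu^{-r} f^k_r(\lambda)$ in the limit is $\sum_{r=1}^n(-1)^{r+1}(\A_k^r-(-\A_k)^r)/(r\nu^r)$. Subtracting this from the surviving $\lambda$-independent part $\log(1+\A_k/\nu)-\log(1-\A_k/\nu)$ of $t^k_{\nu}(\lambda)$ yields exactly the stated $b^k_{\nu}$.

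The even case is parallel, but now the three Bessel-log terms $-\log(\A_k I_{\nu}+\nu zI'_{\nu})-\log(-\A_k I_{\nu}+\nu zI'_{\nu})+2\log I_{\nu}(\nu z)$ no longer cancel: the leading $\nu z$ terms drop out, but the $\log$ remainders combine to $-\log(\nu z/(2\pi))-\log(2\pi\nu z)=-2\log(\nu z)$, which together with the additional $+2\log\nu$ in $t^k_{\nu}(\lambda)$ gives $-2\log z=-\log(-\lambda)$. This produces $a^k_{\nu}=-1$. The remaining $\lambda$-independent contribution from $t^k_{\nu}(\lambda)$ is $\log(1+\A_k/\nu)+\log(1-\A_k/\nu)$, and the $\lambda\to\infty$ limit of $\sum \nu^{-r} f^k_r(\lambda)$ is $\sum_{r=1}^n(-1)^{r+1}(\A_k^r+(-\A_k)^r)/(r\nu^r)$ (again because $D_r(t)$ and $M_r(t,\pm\A_k)$ are polynomials in $t$ vanishing at $t=0$). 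Subtraction gives the claimed $b^k_{\nu}$.

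The main technical obstacle is bookkeeping: one must verify that all the intermediate $\tfrac12\log(\nu z)$ and $\log(2\pi)$ contributions cancel exactly against the $+2\log\nu$ correction in the even case (and against nothing in the odd case), and one must be careful that the error from the Bessel asymptotics is genuinely $O(1/z)=O((-\lambda)^{-1/2})$ uniformly on $\Lambda_c$. The latter follows from the fact that the subdominant ratios $\A_k I_{\nu}(\nu z)/(\nu z I'_{\nu}(\nu z))$ and $I'_{\nu}/I_{\nu}-1$ admit full asymptotic expansions in $1/z$ whose leading term is $O(1/z)$ on the sector determined by $\Lambda_c$. No delicate cancellation of non-elementary quantities occurs, so beyond this bookkeeping the argument is a direct computation.
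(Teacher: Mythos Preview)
Your argument is correct and follows essentially the same route as the paper: both use the large-argument asymptotics $I_{\nu}(w),I'_{\nu}(w)=e^{w}(2\pi w)^{-1/2}(1+O(1/w))$ to expand $t^k_{\nu}(\lambda)$, observe that the polynomial parts $M_r(t,\pm\A_k),D_r(t)$ of $f^k_r(\lambda)$ vanish as $t=(1-\lambda)^{-1/2}\to 0$, and then read off $a^k_{\nu},b^k_{\nu}$ from the definition $p^k_{\nu}=t^k_{\nu}-\sum_r\nu^{-r}f^k_r$. Your explicit tracking of the $\tfrac12\log(\nu z/(2\pi))$ terms in the even case (leading to $-2\log(\nu z)+2\log\nu=-\log(-\lambda)$) is exactly the computation the paper summarizes in one line, and your remark on the sector $|\arg z|\to 3\pi/8$ matches the paper's appeal to the validity region $|\arg z|<\pi/2$.
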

\begin{proof}
For large argument $\lambda$ we obtain $$t=\frac{1}{\sqrt{1+z^2}}=\frac{1}{\sqrt{1-\lambda}}=O\left((-\lambda)^{-1/2}\right).$$ Therefore the polynomials $M_r(t,\pm \A_k)$ and $D_r(t)$, having no constant terms, are of asymptotics $O\left((-\lambda)^{-1/2}\right)$ for large $\lambda$. Hence directly from Lemma \ref{f-r} we obtain in odd dimensions for large $\lambda$
\begin{align}\label{f-r-asymptotic-odd}
\frac{f^k_r(\lambda)}{\nu^r}\sim (-1)^{r+1}\frac{(\A_k)^r-(-\A_k)^r}{r\nu^r}+O\left((-\lambda)^{-1/2}\right).
\end{align}
In even dimensions we get 
\begin{align}\label{f-r-asymptotic-even}
\frac{f^k_r(\lambda)}{\nu^r}\sim (-1)^{r+1}\frac{(\A_k)^r+(-\A_k)^r}{r\nu^r}+O\left((-\lambda)^{-1/2}\right).
\end{align}
It remains to identify explicitly the asymptotics of $t_{\nu}^k(\lambda)$. Note by [AS, p. 377] the following expansions for large arguments and fixed order: 
$$I_{\nu}(z)=\frac{e^z}{\sqrt{2\pi z}}\left(1+O\left(\frac{1}{z}\right)\right), \quad I'_{\nu}(z)=\frac{e^z}{\sqrt{2\pi z}}\left(1+O\left(\frac{1}{z}\right)\right).$$ These expansions hold for $|\textup{arg}(z)|<\pi /2$ and in particular for $z=\sqrt{-\lambda}$ with $\lambda \in \Lambda_c$ large, where $\Lambda_c$ is defined in \eqref{l-c-2}. Further observe for such $z=\sqrt{-\lambda}, \lambda \in \Lambda_c$ large:
\begin{align*}
\log \left(1+O\left(\frac{1}{z}\right)\right)=O\left((-\lambda)^{-1/2}\right), \\ \Rightarrow \ \log (\pm \A_k +\nu z)=\log z + \log \nu + \log \left(1\pm \frac{\A_k}{\nu z}\right)= \\ =\log z + \log \nu + O\left((-\lambda)^{-1/2}\right).
\end{align*}
Together with the expansions of the Bessel-functions we obtain for $t_{\nu}^k(\lambda)$ defined in Corollary \ref{contribution3}
\begin{align*}
t_{\nu}^k(\lambda)=\log \left( 1+\frac{\A_k}{\nu}\right)-\log \left( 1-\frac{\A_k}{\nu}\right) + O\left((-\lambda)^{-1/2}\right), \\ \textup{for $\dim M$ odd}, \\
t_{\nu}^k(\lambda)=-\log (-\lambda)+\log \left( 1+\frac{\A_k}{\nu}\right)+\log \left( 1-\frac{\A_k}{\nu}\right) + O\left((-\lambda)^{-1/2}\right), \\ \textup{for $\dim M$ even}.
\end{align*}
Recall the definition of $p_{\nu}^k(\lambda)$ in \eqref{p}. Combining this with \eqref{f-r-asymptotic-odd} and \eqref{f-r-asymptotic-even} we obtain the desired result.
\end{proof}
\begin{defn} \label{AB1} With the coefficients $a_{\nu}^k$ and $b_{\nu}^k$ defined in Proposition \ref{AB}, we set for $Re(s)\gg0$ 
$$A^k(s):=\sum_{\nu \in F_k}a^k_{\nu}\, \nu^{-2s}, \quad B^k(s):=\sum_{\nu \in F_k}b^k_{\nu}\, \nu^{-2s}.$$
\end{defn} \ \\
Now the last step towards the evaluation of the zeta-function of Corollary \ref{contribution3} is the discussion of $$P^k(s,\lambda):=\sum_{\nu \in F_k}p^k_{\nu}(\lambda)\, \nu^{-2s}, \ Re(s)\gg0.$$ At this point the advantage of taking in account the symmetry of the de Rham complex is particularly visible:
\begin{prop}\label{P}
\begin{align*}
P^k(s,0)=0.
\end{align*}
\end{prop}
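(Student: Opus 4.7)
The strategy is to prove $P^k(s, 0) = 0$ term-by-term in the series $\sum_{\nu \in F_k} p^k_\nu(0)\, \nu^{-2s}$. Since $p^k_\nu(\lambda) = t^k_\nu(\lambda) - \sum_{r=1}^{n} f^k_r(\lambda)/\nu^r$, it suffices to establish
\begin{align*}
t^k_\nu(0) = 0 \qquad \text{and} \qquad f^k_r(0) = 0 \quad (r = 1, \dots, n),
\end{align*}
whereupon $p^k_\nu(0) = 0$ for each $\nu \in F_k$ and the claim follows.

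The first vanishing is immediate from the infinite-product representation \eqref{prod-Bessel} developed in the proof of Corollary \ref{contribution3}: substituting back into \eqref{t-D-special} and \eqref{t-N-special}, one obtains
\begin{align*}
t^{D,k}_\nu(\lambda) = -\sum_{i=1}^{\infty}\log\Bigl(1 + \nu^2 z^2 / j^2_{\nu,i}\Bigr), \quad t^{N_l,k}_\nu(\lambda) = -\sum_{i=1}^{\infty}\log\Bigl(1 + \nu^2 z^2 / \widetilde{j}^2_{\nu,l,i}\Bigr),
\end{align*}
both of which vanish identically at $z = \sqrt{-\lambda} = 0$ in either parity of $\dim M$, and hence $t^k_\nu(0) = 0$.

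For $f^k_r(0)$ the formulas of Lemma \ref{f-r} require the polynomial values $D_r(1)$ and $M_r(1, \pm\A_k)$, since $t = 1/\sqrt{1-\lambda}$ equals $1$ at $\lambda = 0$. The key observation is that $t = 1$ corresponds to the boundary point $z = 0$, where the Bessel quantities admit closed-form leading behavior: from \eqref{prod-Bessel} at $z = 0$ one reads off $I_\nu(\nu z) \sim (\nu z)^\nu/[2^\nu\Gamma(\nu+1)]$ and $\widetilde{I}^N_\nu(\nu z) \sim (\nu z)^\nu(1+\A/\nu)/[2^\nu\Gamma(\nu)]$. Taking logarithms of both identities and matching them, as $\nu \to \infty$, against Olver's uniform expansion evaluated at $t = 1$ (where $\nu\eta = \nu + \nu\log(z/2)$), Stirling's series for $\log\Gamma$ yields
\begin{align*}
D_{2k}(1) = 0, \quad D_{2k-1}(1) = -\frac{B_{2k}}{2k(2k-1)}, \quad M_r(1, \A) = D_r(1) + \frac{(-1)^{r+1}\A^r}{r}.
\end{align*}
Plugging into the expressions for $f^k_r(0)$ from Lemma \ref{f-r}, in both parities the $D_r(1)$ contributions cancel (trivially in odd dimension, and against the $2D_r(t)|_{t=1}$ summand in even dimension), while the $\A_k$-dependent pieces cancel against the arithmetic terms $(-1)^{r+1}(\A_k^r \mp (-\A_k)^r)/r$. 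Hence $f^k_r(0) = 0$ for every $r$, and the result follows.

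The main obstacle is the clean identification of $D_r(1)$ and $M_r(1, \pm\A_k)$: a direct attack through Olver's recursion for the polynomials $u_r(t), v_r(t)$ would be forbidding. The decisive simplification is that at the boundary $t = 1$, i.e.\ $z = 0$, the small-$z$ Taylor series of the Bessel functions and the large-$\nu$ uniform expansion describe the same quantity, collapsing the computation to Stirling's series.
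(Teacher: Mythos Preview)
Your proof is correct and follows the same overall structure as the paper's: show $t^k_\nu(0)=0$ and $f^k_r(0)=0$ separately, the latter via the identity $M_r(1,\A)=D_r(1)+(-1)^{r+1}\A^r/r$. The differences are in how each piece is obtained. For $t^k_\nu(0)=0$ you appeal to the original series definitions of $t^{D,k}_\nu$ and $t^{N_l,k}_\nu$ (each summand $\log(1-\nu^2\lambda/j^2)$ vanishes at $\lambda=0$), which is more direct; the paper instead verifies the same vanishing from the closed Bessel-function form of $t^k_\nu$ using the small-argument asymptotics $I_\nu(z)\sim (z/2)^\nu/\Gamma(\nu+1)$. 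For the key polynomial identity, the paper simply cites it as \eqref{DM} from [BGKE, (4.24)], whereas you sketch its derivation by matching the small-$z$ behaviour of the Bessel quantities against Olver's uniform expansion at $t=1$ and invoking Stirling's series. Your explicit Bernoulli-number values for $D_r(1)$ are correct but superfluous here, since only the relation between $M_r(1,\A)$ and $D_r(1)$ enters the cancellation.
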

\begin{proof}
As $\lambda \to 0$ we find that $t=(1-\lambda)^{-1/2}$ tends to $1$. Since as in [BGKE, (4.24)]
\begin{align}\label{DM}
M_r(1,\pm \A_k)=D_r(1)+(-1)^{r+1}\frac{(\pm\A_k)^r}{r}
\end{align}
we find with Lemma \ref{f-r} that in both the even- and odd-dimensional case $f^k_r(\lambda) \to 0$ as $\lambda \to 0$. Thus we simply need to study the behaviour of $t^k_{\nu}(\lambda)$ defined in Corollary \ref{contribution3} for small arguments. The results follow from the asymptotic behaviour of Bessel functions of second order for small arguments which holds without further restrictions on $z$
$$I_{\nu}(z)\sim \frac{1}{\Gamma (\nu+1)}\left( \frac{z}{2} \right)^{\nu}, \ |z|\to 0.$$
Using the relation $I'_{\nu}(z)=\frac{1}{2}(I_{\nu+1}(z)+I_{\nu-1}(z)$ we compute as $|z|\to 0$
\begin{align*}
\pm \A_k I_{\nu}(\nu z)+\nu z I'_{\nu}(\nu z)&\sim \frac{\nu}{\Gamma (\nu +1)}\left(\frac{\nu z}{2}\right)^{\nu}\left[1\pm \frac{\A_k}{\nu}+\frac{\nu z^2}{4(\nu+1)}\right], \\
\nu I_{\nu}(\nu z)&\sim \frac{\nu}{\Gamma (\nu +1)}\left(\frac{\nu z}{2}\right)^{\nu}.
\end{align*}
The result now follows from the explicit form of $t_{\nu}^k(\lambda)$.
\end{proof}
\begin{remark}
The statement of Proposition \ref{P} shows an obvious advantage of taking in account the symmetry of the de Rham complex.
\end{remark}
\ 
\\[-3mm] Now we have all the ingredients together, since by analogous arguments as in [S, Section 4.1] the total zeta-function of Corollary \ref{contribution3} is given as follows:
\begin{align*}
\zeta_k(s) &=  \frac{s}{\Gamma (s+1)}[\gamma A^k(s)-B^k(s)-\frac{1}{s}A^k(s)+P^k(s,0)]\ + \\ + \  \sum_{r=1}^n&\frac{s^2}{\Gamma (s+1)}\zeta_{k,N}(2s+r)\int_0^{\infty}t^{s-1}\frac{1}{2\pi i}\int_{\wedge_c}\frac{e^{-\lambda t}}{-\lambda}f^k_r(\lambda)d\lambda dt + \frac{s^2}{\Gamma (s+1)}h(s),
\end{align*}
where the last term vanishes with its derivative at $s=0$. Simply by inserting the results of Lemma \ref{ff}, Proposition \ref{AB}, Proposition \ref{P} together with Definition \ref{AB1} into the above expression we obtain the following proposition:
\begin{prop}\label{zeta-total} Continue in the setting of Corollary \ref{contribution3}. Up to a term of the form $s^2h(s) /\Gamma (s+1)$, which vanishes with its derivative at $s=0$, the zeta-function $\zeta_k(s)$ from Definition \ref{zeta} is given in odd dimensions by
\begin{align*}
\frac{s}{\Gamma (s+1)}\left[ \sum_{\nu \in F_k}\nu^{-2s}\log\left(1-\frac{\A_k}{\nu}\right)-\sum_{\nu \in F_k}\nu^{-2s}\log\left(1+\frac{\A_k}{\nu}\right)\right.+ \\ + \sum_{r=1}^n\zeta_{k,N}(2s+r)\left.(-1)^{r+1}\frac{\A_k^r-(-\A_k)^r}{r}\right]+\\ +\sum_{r=1}^n\zeta_{k,N}(2s+r)\frac{s}{\Gamma (s+1)}\left[\sum_{b=0}^{r}(z_{r,b}(-\A_k)-z_{r,b}(\A_k))\frac{\Gamma(s+b+r/2)}{\Gamma (b+r/2)}\right].
\end{align*}
In even dimensions we obtain
\begin{align*}
\frac{s}{\Gamma (s+1)}\left[ -\sum_{\nu \in F_k}\nu^{-2s}\log\left(1-\frac{\A_k}{\nu}\right)-\sum_{\nu \in F_k}\nu^{-2s}\log\left(1+\frac{\A_k}{\nu}\right) +\right.\\\left.  +\sum_{\nu \in F_k}\nu^{-2s}\left(\frac{1}{s}-\gamma \right)+ \sum_{r=1}^n\zeta_{k,N}(2s+r)(-1)^{r+1}\frac{\A_k^r+(-\A_k)^r}{r}\right]+\\ +\sum_{r=1}^n\zeta_{k,N}(2s+r)\frac{s}{\Gamma (s+1)}\left[\sum_{b=0}^{r}(2x_{r,b}-z_{r,b}(-\A_k)-\right.\\\left.-z_{r,b}(\A_k))\frac{\Gamma(s+b+r/2)}{\Gamma (b+r/2)}\right].
\end{align*}
\end{prop}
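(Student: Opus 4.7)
The plan is to take the master formula displayed just above the statement, which follows by repeating the contour and Mellin manipulations of [S, Section 4.1] verbatim with $T^k(s,\lambda)$ replacing Spreafico's $T(s,\lambda)$, and simply substitute in Lemma \ref{ff}, Proposition \ref{AB}, Proposition \ref{P} and Definition \ref{AB1}. Concretely, I would separate the two $r$-independent contributions (the bracket involving $A^k,B^k,P^k$) from the $\sum_{r=1}^n$ piece, and treat each in turn.

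First I would do the odd case. By Proposition \ref{AB} we have $a_\nu^k=0$, hence $A^k(s)\equiv 0$, so $\gamma A^k(s)$ and $A^k(s)/s$ both drop out, and by Proposition \ref{P} also $P^k(s,0)=0$. Thus the first line of the master formula collapses to $-sB^k(s)/\Gamma(s+1)$. Splitting the three summands of $b_\nu^k$ according to Proposition \ref{AB} and pulling the tail $\sum_{r=1}^n (-1)^{r+1}(\A_k^r-(-\A_k)^r)/(r\nu^r)$ out as $\sum_r (-1)^{r+1}(\A_k^r-(-\A_k)^r)/r\cdot\zeta_{k,N}(2s+r)$ produces precisely the three terms inside the first bracket of the claimed odd-dimensional expression. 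For the $\sum_{r=1}^n$ piece, Lemma \ref{ff} evaluates the inner double integral as $\sum_{b=0}^r(z_{r,b}(-\A_k)-z_{r,b}(\A_k))\Gamma(s+b+r/2)/(s\Gamma(b+r/2))$; the resulting $1/s$ cancels one power of $s$ in the $s^2/\Gamma(s+1)$ prefactor, delivering the second displayed line.

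The even case is completely parallel, but now $a_\nu^k=-1$, so $A^k(s)=-\zeta_{k,N}(2s)$, and the combination $\gamma A^k(s)-A^k(s)/s$ becomes $\zeta_{k,N}(2s)(1/s-\gamma)=\sum_{\nu\in F_k}\nu^{-2s}(1/s-\gamma)$, which is exactly the extra summand visible in the even-dimensional formula. The expansion of $B^k(s)$ uses the even-dimensional form of $b_\nu^k$, yielding the two logarithmic sums with a plus sign between them and the tail $-\sum_r(-1)^{r+1}(\A_k^r+(-\A_k)^r)\zeta_{k,N}(2s+r)/r$, which after the overall minus sign from $-B^k(s)$ matches the stated signs. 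Lemma \ref{ff} provides the even-dimensional coefficients $2x_{r,b}-z_{r,b}(-\A_k)-z_{r,b}(\A_k)$ in the $\sum_{r,b}$ piece, and the residual $s^2 h(s)/\Gamma(s+1)$ term from the asymptotic remainder in $p_\nu^k(\lambda)$ is absorbed into the term announced to vanish with its derivative at $s=0$.

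The main obstacle is not conceptual but bookkeeping: one must verify that the signs and normalizations line up in both parities. The most error-prone point is keeping straight the two sign conventions built into $t_\nu^k(\lambda)$ (the $(-1)^{n-1}$ factor from Definition \ref{zeta}), which combine differently with the symmetric/antisymmetric combinations $\A_k^r\pm(-\A_k)^r$ appearing in Proposition \ref{AB} and $z_{r,b}(\A_k)\pm z_{r,b}(-\A_k)$ appearing in Lemma \ref{ff}. Once these combinations are tracked carefully, the statement is a direct algebraic substitution.
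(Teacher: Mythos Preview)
Your proposal is correct and follows essentially the same approach as the paper: the paper states the master formula from [S, Section~4.1] and then says the proposition follows ``simply by inserting the results of Lemma~\ref{ff}, Proposition~\ref{AB}, Proposition~\ref{P} together with Definition~\ref{AB1}'' into it, which is precisely what you do, with the added benefit that you spell out the sign bookkeeping in each parity.
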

\begin{cor}\label{total-contribution-1}
With $\zeta_{k,N}(s,a):=\sum_{\nu\in F_k} (\nu+a)^{-s}$ we deduce for odd dimensions
\begin{align*}
\zeta_k'(0)=& \,\,\zeta_{k,N}'(0,\A_k)-\zeta_{k,N}'(0,-\A_k)+\\+&\sum_{i=1}^n(-1)^{i+1}\frac{\A_k^i-(-\A_k)^i}{i}\textup{Res}\zeta_{k,N}(i)\left\{\frac{\gamma}{2}+\frac{\Gamma'(i)}{\Gamma(i)}\right\}+ \\+&
\sum_{i=1}^n\frac{1}{2}\,\textup{Res}\zeta_{k,N}(i)\sum_{b=0}^{i}\left(z_{i,b}(-\A_k)-z_{i,b}(\A_k)\right)\frac{\Gamma'(b+i/2)}{\Gamma (b+i/2)}.
\end{align*}
and for even dimensions
\begin{align*}
\zeta_k'(0)=& \,\,\zeta_{k,N}'(0,\A_k)+\zeta_{k,N}'(0,-\A_k)+\\+&\sum_{i=1}^n(-1)^{i+1}\frac{\A_k^i+(-\A_k)^i}{i}\textup{Res}\zeta_{k,N}(i)\left\{\frac{\gamma}{2}+\frac{\Gamma'(i)}{\Gamma(i)}\right\}+ \\+&
\sum_{i=1}^n\frac{1}{2}\,\textup{Res}\zeta_{k,N}(i)\sum_{b=0}^{i}\left(2x_{i,b}-z_{i,b}(-\A_k)-z_{i,b}(\A_k)\right)\frac{\Gamma'(b+i/2)}{\Gamma (b+i/2)}.
\end{align*}
\end{cor}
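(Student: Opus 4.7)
The plan is to compute $\zeta_k'(0)$ directly from Proposition \ref{zeta-total} by Laurent expanding every factor around $s=0$ and collecting contributions. I would start from the three elementary expansions
\begin{align*}
\frac{s}{\Gamma(s+1)} &= s+\gamma s^2+O(s^3),\\
\zeta_{k,N}(2s+r) &= \frac{\textup{Res}\,\zeta_{k,N}(r)}{2s}+\textup{FP}\,\zeta_{k,N}(r)+O(s),\\
\frac{\Gamma(s+b+r/2)}{\Gamma(b+r/2)} &= 1+s\,\frac{\Gamma'(b+r/2)}{\Gamma(b+r/2)}+O(s^2).
\end{align*}
If a factor of $\zeta_k(s)$ has Laurent expansion $V(s)=V_{-1}/s+V_0+O(s)$ near $s=0$, then the derivative of $\frac{s}{\Gamma(s+1)}V(s)$ at $s=0$ equals $V_0+\gamma V_{-1}$; this mechanism is what will produce the $\gamma/2$ part of the coefficients $\gamma/2+\Gamma'(i)/\Gamma(i)$ in the statement.

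The central step is handling the two log-sums $\sum_\nu\nu^{-2s}\log(1\pm\A_k/\nu)$. Termwise expansion of $\log(1+a/\nu)$ in powers of $1/\nu$ gives the meromorphic identity
$$\sum_{\nu\in F_k}\nu^{-2s}\log(1\pm\A_k/\nu)=\sum_{j=1}^{\infty}\frac{(-1)^{j+1}(\pm\A_k)^j}{j}\,\zeta_{k,N}(2s+j),$$
and the finite compensating sum $\sum_{r=1}^{n}(-1)^{r+1}\frac{\A_k^r\mp(-\A_k)^r}{r}\,\zeta_{k,N}(2s+r)$ built into Proposition \ref{zeta-total} removes exactly the $j\leq n$ portion, leaving a tail which is regular at $s=0$. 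To rewrite this tail in terms of the Hurwitz-type derivatives $\zeta_{k,N}'(0,\pm\A_k)$ I would use the expansion
$$\zeta_{k,N}(s,a)=\sum_{\nu\in F_k}(\nu+a)^{-s}=\sum_{j\geq 0}\binom{-s}{j}a^j\,\zeta_{k,N}(s+j),$$
valid for $|a|$ small and meromorphically continued, and differentiate at $s=0$ using $\binom{-s}{j}=\frac{(-1)^j}{j}\bigl[s+(\Gamma'(j)/\Gamma(j)+\gamma)s^2\bigr]+O(s^3)$. This identifies $\zeta_{k,N}'(0,a)-\zeta_{k,N}'(0)$ with the same tail sum plus correction terms of the form $\frac{(-1)^j a^j}{j}\bigl[\textup{FP}\,\zeta_{k,N}(j)+(\Gamma'(j)/\Gamma(j)+\gamma)\textup{Res}\,\zeta_{k,N}(j)\bigr]$ supported on the pole indices $j\leq n$.

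The last step is assembly and cancellation. The finite parts $\textup{FP}\,\zeta_{k,N}(r)$ produced by the above identification will cancel those generated by the second sum of Proposition \ref{zeta-total}: a direct evaluation using $M_r(1,\pm\A_k)=D_r(1)+(-1)^{r+1}(\pm\A_k)^r/r$ from \eqref{DM} gives $J_r(0):=\sum_b(z_{r,b}(-\A_k)-z_{r,b}(\A_k))=-(-1)^{r+1}[\A_k^r-(-\A_k)^r]/r$, which produces precisely the opposite sign. What survives consists of exactly the three claimed groups: the Hurwitz-type piece $\zeta_{k,N}'(0,\A_k)-\zeta_{k,N}'(0,-\A_k)$, a residue sum with factor $\gamma/2+\Gamma'(i)/\Gamma(i)$ assembled from the tail correction $(\Gamma'(r)/\Gamma(r)+\gamma)R_r$ together with the $V_0+\gamma V_{-1}$ rule applied to the second sum, and a residue sum with $\Gamma'(b+i/2)/\Gamma(b+i/2)$ coming from $R_r J_r'(0)/2$. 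The even-dimensional formula is obtained in parallel: the extra term $(1/s-\gamma)\zeta_{k,N}(2s)$ appearing in Proposition \ref{zeta-total} contributes $2\zeta_{k,N}'(0)$, which together with the symmetric combinations converts the difference $\zeta_{k,N}'(0,\A_k)-\zeta_{k,N}'(0,-\A_k)$ into the sum $\zeta_{k,N}'(0,\A_k)+\zeta_{k,N}'(0,-\A_k)$ and replaces the antisymmetric combination of $z_{r,b}$'s by $2x_{r,b}-z_{r,b}(-\A_k)-z_{r,b}(\A_k)$. The main obstacle is the careful bookkeeping of pole-and-finite-part contributions when identifying the regularized log-sum with $\zeta_{k,N}'(0,\pm\A_k)$; once this is done, the stated formula follows by a direct collection of terms.
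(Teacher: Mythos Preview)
Your proposal is correct and follows the same overall strategy as the paper: differentiate the expression in Proposition \ref{zeta-total} at $s=0$, identify the regularized log-sum with the Hurwitz-type derivative $\zeta_{k,N}'(0,\pm\A_k)$, and use the identity \eqref{DM} to cancel the finite-part contributions against those produced by the second sum. The one technical difference is how you carry out the identification step. You expand $(\nu+a)^{-s}$ binomially as $\sum_j\binom{-s}{j}a^j\nu^{-s-j}$ and read off $\zeta_{k,N}'(0,a)-\zeta_{k,N}'(0)$ term by term; the paper instead follows [BKD, Section 11] and introduces the Mellin-type regularization
\[
K_0(z)=\sum_{\nu}\int_0^\infty t^{z-1}e^{-\nu t}\Bigl(e^{-\A t}+\sum_{i=0}^n(-1)^{i+1}\frac{(\A t)^i}{i!}\Bigr)\,dt
=\Gamma(z)\zeta_{k,N}(z,\A)+\sum_{i=0}^n(-1)^{i+1}\frac{\A^i}{i!}\Gamma(z+i)\zeta_{k,N}(z+i),
\]
and evaluates at $z=0$. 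Both routes yield exactly the same expression for $K(0)$; your binomial argument is more elementary, while in the paper's integral the combination $\gamma+\Gamma'(i)/\Gamma(i)$ emerges directly from the Laurent expansion of $\Gamma(z)$ at the front and the Taylor expansion of $\Gamma(z+i)$ in the sum.
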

\begin{proof}
First we consider a major building brick of the expressions in Proposition \ref{zeta-total}. Here we follow the approach of [BKD, Section 11]. Put for $\A\in\{\pm \A_k\}$
\begin{align*}
K(s):=\sum_{\nu \in F_k}\nu^{-2s}\left[-\log\left(1+\frac{\A}{\nu}\right)+\sum_{r=1}^n(-1)^{r+1}\frac{1}{r}\left(\frac{\A}{\nu}\right)^r\right].
\end{align*}
Since the zeta-function $\zeta_{k,N}(s)=\sum_{\nu \in F_k}\nu^{-s}$ converges absolutely for $Re(s)\geq n+1, n=\dim N$, the sum above converges for $s=0$. In order to evaluate $K(0)$, introduce a regularization parameter $z$ as follows:
\begin{align*}
K_0(z):= \sum_{\nu \in F_k}\int_0^{\infty}t^{z-1}e^{-\nu t}\left(e^{-\A t}+\sum_{i=0}^n(-1)^{i+1}\frac{\A^it^i}{i!}\right) dt\\
=\Gamma(z)\cdot \zeta_{k,N}(z,\A)+\sum_{i=0}^n(-1)^{i+1}\frac{\A^i}{i!}\Gamma (z+i)\zeta_{k,N}(z+i), 
\end{align*}
where we have introduced
\begin{align*}
\zeta_{k,N}(z,\A):=\frac{1}{\Gamma(z)}\sum_{\nu \in F_k}\int_0^{\infty}t^{z-1}e^{-(\nu +\A)t}\, dt.
\end{align*}
For $Re(s)$ large enough $\zeta_{k,N}(z,\A)=\sum_{\nu \in F_k}(\nu +\A)^{-z},$
is holomorphic and extends meromorphically to $\C$, since it is the zeta-function of $\triangle_{k,ccl,N}+\A$. Note that for $\A\in \{\pm \A_k\}$ and $\nu \in F_k$ we have $\A\neq -\nu$, so no zero mode appears in the zeta function $\zeta_{k,N}(z,\A)$. In particular $K_0(z)$ is meromorphic in $z\in \C$ and by construction $$K_0(0)=K(0).$$ With the same arguments as in [BKD, Section 11] we arrive at
\begin{align*}
K(0)=&\zeta_{k,N}'(0,\A)
-\zeta_{k,N}'(0)+ \\ + &\sum_{i=1}^n(-1)^{i+1}\frac{\A^i}{i}\left[\textup{Res}\zeta_{k,N}(i)\left\{\!\gamma +\frac{\Gamma'(i)}{\Gamma(i)}\!\right\}+\textup{PP}\zeta_{k,N}(i)\right],\end{align*}
where PP$\zeta_{k,N}(r)$ denotes the constant term in the asymptotics of $\zeta_{k,N}(s)$ near the pole singularity $s=r$. This result corresponds to the result obtained in [BKD, p.388], where the factors $1/2$ in front of $\zeta'_{k,N}(0)$ and $2$ in front of $\textup{Res}\,\zeta_{k,N}(i)$, as present in [BKD], do not appear here because of a different notation: here we have set $\zeta_{k,N}(s)=\sum \nu^{-s}$ instead of $\sum \nu^{-2s}$. 
\\[3mm] In fact $K(0)$ enters the calculations twice: with $\A=\A_k$ and $\A=-\A_k$. In the odd-dimensional case both expressions are subtracted from each other, in the even-dimensional case they are added up. Furthermore we compute straightforwardly
\begin{align*}
\left.\frac{d}{ds}\right|_{s=0}\zeta_{k,N}(2s+r)\frac{s}{\Gamma (s+1)}\frac{\Gamma(s+b+r/2)}{\Gamma (b+r/2)}=\\=\frac{1}{2}\,\textup{Res}\zeta_{k,N}(r)\left[\frac{\Gamma'(b+r/2)}{\Gamma (b+r/2)}+\gamma \right]+\textup{PP}\zeta_{k,N}(r).
\end{align*}
We infer from \eqref{DM} 
\begin{align*}
\sum_{b=0}^{r}(z_{r,b}(-\A_k)-z_{r,b}(\A_k))=(-1)^r\frac{\A_k^r-(-\A_k)^r}{r}, \\
\sum_{b=0}^{i}\left(2x_{i,b}-z_{i,b}(-\A_k)-z_{i,b}(\A_k)\right)=(-1)^r\frac{\A_k^r+(-\A_k)^r}{r}.
\end{align*}
This leads after several cancellations to the desired result in odd dimensions. In even dimensions the result follows by a straightforward evaluation of the derivative at zero for the remaining component:
\begin{align*}
\left.\frac{d}{ds}\right|_{s=0}\frac{s}{\Gamma (s+1)}\zeta_{k,N}(2s)\left(\frac{1}{s}-\gamma\right)=2\zeta'_{k,N}(0).
\end{align*}
\end{proof}

%*********************************************************
\section{Contribution from the Subcomplexes II}\
%*********************************************************
\\[-3mm] It remains to identify the contribution to the analytic torsion coming from the subcomplexes \eqref{complex-2} of second type, induced by the harmonics on the base manifold $N$. The necessary calculations are provided in [L3] and are repeated here for completeness. Recall the explicit form of these subcomplexes 
\begin{align}
0\to C^{\infty}_0((0,1),\langle 0\, \oplus \, u_i\rangle)\xrightarrow{d} C^{\infty}_0((0,1),\langle u_i\oplus 0\rangle) \to 0,
\end{align}
where $\{u_i\}$ is an orthonormal basis of $\dim \mathcal{H}^k(N)$. With respect to the generators $0\oplus u_i$ and $u_i\oplus 0$ we obtain for the action of the exterior derivative $$d=(-1)^k\partial_x+\frac{c_k}{x}, \quad c_k=(-1)^k(k-n/2).$$
By compatibility of the induced decomposition we have (cf. \eqref{H1-rel})
\begin{align*}
\dom (\triangle_k^{rel})\cap L^2((0,R),&\langle 0\oplus u_i\rangle)=\dom (d^t_{\max}d_{\min})=\\ &=\dom\left((-1)^{k+1}\partial_x+\frac{c_k}{x}\right)_{\max}\left((-1)^{k}\partial_x+\frac{c_k}{x}\right)_{\min}.
\end{align*}
Consider, in the notation of Section \ref{auxiliary}, for any $\nu \in \R$ and $\A\in \R\cup \{\infty\}$ the operator $l_{\nu}=-\partial_x^2+x^{-2}(\nu^2-1/4)$ with the following self-adjoint extension:
\begin{align*}
\mathcal{D}(L_{\nu}(\A))=\{f\in \mathcal{D}(l_{\nu, \max})|(\A-1/2)^{-1}f'(1)+f(1)=0, \\ f(x)=O(\sqrt{x}), x\to 0\}.
\end{align*} 
Here $L_{\nu}(\A=1/2)$ denotes the self-adjoint extension of $l_{\nu}$ with pure Neumann boundary conditions at $x=1$. Furthermore $L_{\nu}(\infty)$ is the extension with Dirichlet boundary conditions at $x=1$. As a consequence of [BV1, Proposition 2.7] we have
$$\left((-1)^{k+1}\partial_x+\frac{c_k}{x}\right)_{\max}\left((-1)^{k}\partial_x+\frac{c_k}{x}\right)_{\min}=L_{|k-(n-1)/2|}(\infty).$$
It is well-known, see also [L, Theorem 1.1] and [L, (1.37)], that the zeta-function of $L_{\nu}(\A)$ extends meromorphically to $\C$ and is regular at the origin. We abbreviate $$T(L_{\nu}(\A)):=\log \det L_{\nu}(\A)=-\zeta'(s=0, L_{\nu}(\A)).$$
Put $b_k:=\dim \mathcal{H}^{k}(N)$. Then the contribution to the analytic torsion coming from harmonics on the base manifold is given due to the formula \eqref{spectral-relation2} as follows: 
\begin{align}\label{harmonics}
\frac{1}{2}\sum_{k=0}^{\dim M} (-1)^{k} b_k \,T(L_{|k-(n-1)/2|}(\infty))
\end{align}
\begin{prop}\label{spec} \ \\
For $\nu \geq 0$ we have $\textup{Spec}(L_{\nu}(\infty))\cup\{0\}=\textup{Spec}(L_{\nu+1}(\nu +1))\cup\{0\}$. 
\end{prop}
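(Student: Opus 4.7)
The plan is to establish the statement by explicitly computing a Bessel-function identity that matches the eigenvalue equations for the two operators. The eigenvalues of $L_\nu(\infty)$ are, by Corollary \ref{bessel-zeros}, the squares of the positive zeros of $J_\nu(z)$. The eigenvalues of $L_{\nu+1}(\nu+1)$ correspond, by the same argument used in the proof of Corollary \ref{bessel-zeros} (solutions in the maximal domain of $l_{\nu+1}$ at $x=0$ are of the form $c\sqrt{x}\,J_{\nu+1}(\mu x)$ for $\nu+1>1$, and the boundary condition at $x=1$ then becomes $\widetilde{J}_{\nu+1}^{\nu+1}(\mu)=0$), to the squares of the positive zeros of
\[
\widetilde{J}_{\nu+1}^{\nu+1}(z)=(\nu+1)J_{\nu+1}(z)+zJ'_{\nu+1}(z).
\]
So the whole problem reduces to comparing the positive zeros of $J_\nu(z)$ with those of $\widetilde{J}_{\nu+1}^{\nu+1}(z)$.

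The key step is the classical recurrence $\tfrac{d}{dz}(z^{\nu+1}J_{\nu+1}(z))=z^{\nu+1}J_\nu(z)$. Expanding the derivative and dividing through by $z^\nu$ gives
\[
(\nu+1)J_{\nu+1}(z)+zJ'_{\nu+1}(z)=zJ_\nu(z),
\]
that is $\widetilde{J}_{\nu+1}^{\nu+1}(z)=zJ_\nu(z)$. Thus the sets of \emph{positive} zeros of $\widetilde{J}_{\nu+1}^{\nu+1}$ and of $J_\nu$ coincide, and squaring gives the claimed equality of spectra, up to possibly the eigenvalue $0$.

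The subtle point is that Corollary \ref{bessel-zeros}, as stated, requires $\A^2<\nu^2$ or $\A=\infty$ to guarantee positivity of $L_\nu(\A)$; for $L_{\nu+1}(\nu+1)$ we sit precisely at the boundary case $\A^2=(\nu+1)^2$, so the corollary does not apply verbatim. I would handle this by a short direct argument: a negative eigenvalue $-\kappa^2$ of $L_{\nu+1}(\nu+1)$ would require $(\nu+1)I_{\nu+1}(\kappa)+\kappa I'_{\nu+1}(\kappa)=0$ for some $\kappa>0$, which is impossible since $I_{\nu+1}$ and $I'_{\nu+1}$ are strictly positive on $(0,\infty)$; and the only candidate in $\ker L_{\nu+1}(\nu+1)$ lying in the maximal domain is (a multiple of) $x^{\nu+3/2}$, for which the boundary condition $f'(1)+(\nu+1/2)f(1)=0$ evaluates to $2(\nu+1)\neq 0$. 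Hence $L_{\nu+1}(\nu+1)$ is in fact positive, its spectrum equals the squares of the positive zeros of $\widetilde{J}_{\nu+1}^{\nu+1}$, and the identity $\widetilde{J}_{\nu+1}^{\nu+1}(z)=zJ_\nu(z)$ completes the proof. The adjunction of $\{0\}$ in the statement is then a harmless safeguard covering the limiting cases of small $\nu$.
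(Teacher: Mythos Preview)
Your argument is correct and takes a genuinely different route from the paper's. The paper proves the proposition by an operator factorization: writing $d_p:=\partial_x+p/x$, one has $l_{p+1/2}=d_p^t d_p$ and $l_{p-1/2}=d_p d_p^t$, and after identifying the domains (via [BV1]) one finds
\[
L_{\nu}(\infty)=d_{\nu+1/2,\max}(d_{\nu+1/2,\max})^*,\qquad L_{\nu+1}(\nu+1)=(d_{\nu+1/2,\max})^* d_{\nu+1/2,\max},
\]
so the spectral equality follows from the general fact that $AA^*$ and $A^*A$ share their non-zero spectrum. Your proof instead computes the eigenvalue conditions explicitly as zeros of Bessel-type functions and then matches them via the recurrence $(\nu+1)J_{\nu+1}(z)+zJ'_{\nu+1}(z)=zJ_\nu(z)$.

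The trade-offs: the paper's supersymmetric argument is more conceptual --- it explains \emph{why} the two operators are isospectral without ever writing down a Bessel function, and it gives simplicity of the non-zero eigenvalues for free via the intertwining. Your approach is more elementary in that it avoids the domain analysis of $d_{\nu+1/2,\min/\max}$ from [BV1], at the price of having to verify by hand that $L_{\nu+1}(\nu+1)$ has no negative spectrum and trivial kernel (which you do correctly). One small remark: you invoke Corollary~\ref{bessel-zeros} for $L_\nu(\infty)$, but that corollary is stated for $\nu>1$, whereas the proposition allows $\nu\ge 0$. This is harmless here because the definition of $L_\nu(\A)$ immediately preceding the proposition already imposes $f(x)=O(\sqrt{x})$ at $x=0$, which singles out the $\sqrt{x}\,J_\nu(\mu x)$ solution for all $\nu\ge 0$; it would be worth saying so explicitly.
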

\begin{proof}
Put $d_p:=\partial_x+x^{-1}p$. We get $$l_{p+1/2}=d_p^td_p, \quad l_{p-1/2}=d_pd_p^t.$$
By a combination of [BV1, Proposition 2.5, 2.7], which determine the maximal and the minimal domains of $d_p$, we obtain for $\nu \geq 0$
\begin{align*}
\dom (d_{\nu+1/2, \max}d^t_{\nu+1/2,\min})=\{f\in \dom (l_{\nu,\max})|f(x)=O(\sqrt{x}), x\to 0, f(1)=0\}, \\
\dom (d^t_{\nu+1/2, \min}d_{\nu+1/2,\max})=\{f\in \dom (l_{\nu+1,\max})|f(x)=O(\sqrt{x}), x\to 0, \\ f'(1)+(\nu +1/2)f(1)=0\}.
\end{align*}
Hence we find
\begin{align*}
L_{\nu}(\infty)=d_{\nu+1/2, \max}d^t_{\nu+1/2,\min}=d_{\nu+1/2, \max}(d_{\nu+1/2,\max})^*, \\
L_{\nu+1}(\nu+1)=d^t_{\nu+1/2, \min}d_{\nu+1/2,\max}=(d_{\nu+1/2, \max})^*d_{\nu+1/2,\max}.
\end{align*}
Comparing both operators we deduce the statement on the spectrum, since all non-zero eigenvalues of the operators are simple by similar arguments as in Corollary \ref{bessel-zeros}. 
\end{proof}

\begin{prop}\label{lesch-determinant}
Let $\A+\nu\neq 0$. Then
\begin{align*}
T(L_{\nu}(\infty))=T(L_{\nu}(\A))-\log (\A+\nu).
\end{align*}
\end{prop}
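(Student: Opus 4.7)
The plan is to compute both zeta-regularized determinants $\det_\zeta L_\nu(\A)$ and $\det_\zeta L_\nu(\infty)$ using the same machinery already invoked in the derivation of equation \eqref{hilfe-referenz}, and then simply take their ratio.

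First, I would recall that \eqref{hilfe-referenz} asserts
$$\det\nolimits_\zeta L_\nu(\A) = \sqrt{2\pi}\,\frac{\A+\nu}{2^\nu\,\Gamma(\nu+1)}.$$
The derivation proceeds by applying [L, Theorem 1.2] to the family $L_\nu(\A)+z^2$ with the normalized solutions $\phi(\cdot,z),\psi(\cdot,z)$ of $(l_\nu+z^2)f=0$ from [L, (1.38a),(1.38b)], and specializing to $z=0$; the factor $\sqrt{2\pi}/(2^\nu\Gamma(\nu+1))$ comes from the Weierstrass product [L, (4.22)] for the Bessel function $I_\nu$, while the factor $(\A+\nu)$ comes from evaluating the Wronskian at $z=0$ against the boundary data $\psi(1,0)=1$, $\psi'(1,0)=1/2-\A$.

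Next, I would run exactly the same argument for the Dirichlet realization $L_\nu(\infty)$. The solution $\phi$ normalized at $x=0$ is unchanged, whereas $\psi$ must now be normalized in accordance with the Dirichlet condition at $x=1$, giving $\psi(1,z)=0$, $\psi'(1,z)=1$. Applying [L, Theorem 1.2] to $L_\nu(\infty)+z^2$ at $z=0$, the Wronskian identity reduces to the prefactor coming from [L, (4.22)], yielding
$$\det\nolimits_\zeta L_\nu(\infty) = \frac{\sqrt{2\pi}}{2^\nu\,\Gamma(\nu+1)}.$$
Here the hypothesis $\A+\nu\neq 0$ plays no role for $L_\nu(\infty)$ itself, since the Dirichlet determinant is automatically nonzero (all zeros $j_{\nu,i}^2$ of $I_\nu$'s associated Bessel series are strictly positive by Corollary \ref{bessel-zeros}).

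Finally, dividing the two expressions gives
$$\frac{\det_\zeta L_\nu(\A)}{\det_\zeta L_\nu(\infty)}=\A+\nu,$$
and taking logarithms, using $T(L_\nu(\cdot))=\log\det_\zeta L_\nu(\cdot)=-\zeta'(0,L_\nu(\cdot))$, yields $T(L_\nu(\A))-T(L_\nu(\infty))=\log(\A+\nu)$, which is the asserted identity. The hypothesis $\A+\nu\neq 0$ is exactly what guarantees $0\notin\mathrm{Spec}\,L_\nu(\A)$, so that [L, Proposition 4.6] and hence [L, Theorem 1.2] apply to $L_\nu(\A)$ without a kernel correction; this is the only subtle point, and once it is secured the rest is an algebraic manipulation of two known formulas.
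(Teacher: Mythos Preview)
Your proposal is correct and follows essentially the same route as the paper: compute $\det_\zeta L_\nu(\A)$ and $\det_\zeta L_\nu(\infty)$ separately via [L, Theorem 1.2] (the paper cites the packaged versions [BV1, Corollary 3.11] and [BV1, Corollary 3.12], but these are the same formulas you derive), take the ratio, and take logarithms. Your identification of the hypothesis $\A+\nu\neq 0$ with the condition $\ker L_\nu(\A)=\{0\}$ is also accurate and matches the paper's use of it.
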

\begin{proof} 
The assumption $\A+\nu\neq 0$ implies with [BV1, Corollary 3.11], which is a consequence of [L, Theorem 1.2]
$$\det\nolimits_{\zeta}(L_{\nu}(\A))=\sqrt{2\pi}\frac{\A+\nu}{2^{\nu}\Gamma(1+\nu)}.$$
Moreover we have by [BV1, Corollary 3.12], which in the present setup is a consequence of [L, Theorem 1.2]
$$\det\nolimits_{\zeta}(L_{\nu}(\infty))=\frac{\sqrt{2\pi}}{\Gamma(1+\nu)2^{\nu}}.$$
Consequently we obtain for $\A+\nu\neq 0$
$$\frac{\det\nolimits_{\zeta}(L_{\nu}(\infty))}{\det\nolimits_{\zeta}(L_{\nu}(\A))}=\frac{1}{\A+\nu}.$$
Taking logarithms we get the result.
\end{proof}

\begin{prop}\label{combinatorics}
\begin{align*}
T(L_{k+1/2}(\infty))=\log 2-\sum_{l=0}^k\log (2l+1).
\end{align*}
\end{prop}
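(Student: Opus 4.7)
The strategy is to combine Propositions \ref{spec} and \ref{lesch-determinant} into a one-step recursion in $\nu$, and then iterate from the base case $\nu=1/2$.

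First I would derive the recursion $T(L_{\nu+1}(\infty))=T(L_{\nu}(\infty))-\log(2\nu+2)$. By Proposition \ref{spec} the non-zero spectra of $L_{\nu}(\infty)$ and $L_{\nu+1}(\nu+1)$ coincide as multisets, so their zeta-regularized determinants agree: $T(L_{\nu}(\infty))=T(L_{\nu+1}(\nu+1))$. Applying Proposition \ref{lesch-determinant} with the index shifted by one and $\A=\nu+1$ (so $\A+(\nu+1)=2\nu+2\neq 0$) gives $T(L_{\nu+1}(\infty))=T(L_{\nu+1}(\nu+1))-\log(2\nu+2)$. Chaining the two identities produces the recursion.

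Next I would iterate the recursion over $\nu=1/2,3/2,\ldots,k-1/2$ to obtain
\begin{align*}
T(L_{k+1/2}(\infty))=T(L_{1/2}(\infty))-\sum_{j=0}^{k-1}\log(2j+3).
\end{align*}
The change of index $l=j+1$ rewrites the sum as $\sum_{l=1}^{k}\log(2l+1)$, and since $\log(2\cdot 0+1)=0$ this equals $\sum_{l=0}^{k}\log(2l+1)$, matching the right-hand side of the claim up to the base case.

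To anchor the induction I would compute $T(L_{1/2}(\infty))=\log 2$ directly from the explicit formula recorded in the proof of Proposition \ref{lesch-determinant}, namely $\det_{\zeta}(L_{\nu}(\infty))=\sqrt{2\pi}/(2^{\nu}\Gamma(1+\nu))$. At $\nu=1/2$ this evaluates to $\sqrt{2\pi}/(\sqrt{2}\cdot\sqrt{\pi}/2)=2$, whose logarithm is $\log 2$.

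The only delicate point is making sure the spectral identity of Proposition \ref{spec} actually lifts to an equality of log-determinants; this is fine because the explicit product formula in Proposition \ref{lesch-determinant} shows $\det_{\zeta}(L_{\nu}(\infty))\neq 0$ and $\det_{\zeta}(L_{\nu+1}(\nu+1))\neq 0$ for all $\nu\geq 1/2$, so zero is not an eigenvalue of either operator and its appearance on both sides of Proposition \ref{spec} contributes nothing to the zeta functions. Everything else is bookkeeping of half-integer shifts.
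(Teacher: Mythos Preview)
Your proof is correct and follows essentially the same approach as the paper: derive the recursion $T(L_{\nu+1}(\infty))=T(L_{\nu}(\infty))-\log(2\nu+2)$ by combining Propositions \ref{spec} and \ref{lesch-determinant}, iterate down to $\nu=1/2$, and evaluate the base case. The only difference is in the base-case computation: the paper identifies $L_{1/2}(\infty)$ with $-\partial_x^2$ on $[0,1]$ under Dirichlet conditions, reads off the spectrum $(n^2\pi^2)_{n\in\N}$, and computes $\zeta'_{L_{1/2}(\infty)}(0)=-2(\log\pi)\zeta_R(0)+2\zeta'_R(0)=-\log 2$ directly from Riemann zeta values, whereas you plug $\nu=1/2$ into the closed formula $\det_\zeta(L_{\nu}(\infty))=\sqrt{2\pi}/(2^{\nu}\Gamma(1+\nu))$ quoted in the proof of Proposition \ref{lesch-determinant}. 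One small caveat: your justification that zero is not an eigenvalue via nonvanishing of the zeta-determinant is circular, since the zeta-determinant is defined over nonzero spectrum only; the cleaner argument is that $L_{\nu+1}(\nu+1)=(d_{\nu+1/2,\max})^*d_{\nu+1/2,\max}$ has kernel consisting of $L^2$-solutions of $f'+(\nu+1/2)x^{-1}f=0$, i.e.\ multiples of $x^{-(\nu+1/2)}$, which lie in $L^2(0,1)$ only for $\nu<0$.
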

\begin{proof}
Apply Proposition \ref{lesch-determinant} to $L_{\nu+1}(\nu+1), \nu\geq 0$. We obtain 
\begin{align*}
T(L_{\nu+1}(\infty))=T(L_{\nu+1}(\nu+1))-\log (2\nu+2)=
T(L_{\nu}(\infty))-\log (2\nu+2),
\end{align*}
where for the second equality we used Proposition \ref{spec}. We iterate the equality with $\nu =k-1/2$ and obtain
\begin{align*}
T(L_{k+1/2}(\infty))=T(L_{1/2}(\infty))-\sum_{l=0}^k\log (2l+1).
\end{align*}
The operator $L_{1/2}(\infty)$ is simply $-\partial_x^2$ on [0,1] with Dirichlet boundary conditions. Its spectrum is given by $(n^2\pi^2)_{n\in \N}$. Thus we obtain with $\zeta_R(0)=-1/2$ and $\zeta'_R(0)=-1/2\log 2\pi$
\begin{align*}
\zeta_{L_{1/2}(\infty)}(s)=\sum_{n=1}^{\infty}\pi^{-2s}n^{-2s} \\ \Rightarrow \ \zeta'_{L_{1/2}(\infty)}(0)=-2(\log \pi )\zeta_R(0)+2\zeta'_R(0)=-\log 2.
\end{align*}
\end{proof}\ \\
\\[-7mm] Now we finally compute the contribution from harmonics on the base:
\begin{thm}\label{harm-odd-even} Let $M$ be a bounded generalized cone of length one over a closed oriented Riemannian manifold $N$ of dimension $n$. Let $\chi(N)$ denote the Euler characteristic of $N$ and $b_k:=\dim \mathcal{H}^k(N)$ be the Betti numbers. 
\\[3mm] Then the contribution to the analytic torsion coming from harmonics on the base manifold is given as follows. For $\dim M$ odd the contribution amounts to
\begin{align*}
\frac{\log 2}{2}\chi(N)-\sum_{k=0}^{n/2-1}(-1)^kb_k \sum_{l=0}^{n/2-k-1}\log (2l+1)-\\ - \frac{1}{2}\sum_{k=0}^{n/2-1}(-1)^kb_k\log (n-2k+1).
\end{align*}
For $\dim M$ even the contribution amounts to
\begin{align*}
\frac{1}{2}\sum_{k=0}^{(n-1)/2}(-1)^kb_k\log (n-2k+1).
\end{align*}
\end{thm}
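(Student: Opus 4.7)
The starting point is the contribution formula \eqref{harmonics}, namely
\[
\frac{1}{2}\sum_{k=0}^{n}(-1)^{k}b_{k}\,T(L_{|k-(n-1)/2|}(\infty)),
\]
where I have truncated the sum to $k\le n$ since $b_{k}=\dim\mathcal{H}^{k}(N)$ vanishes for $k>n$. My first observation is that Propositions \ref{spec} and \ref{lesch-determinant} together yield the recursion
\[
T(L_{\nu+1}(\infty))=T(L_{\nu+1}(\nu+1))-\log(2\nu+2)=T(L_{\nu}(\infty))-\log(2\nu+2),\qquad \nu\ge 0,
\]
valid for \emph{any} non-negative real $\nu$, not only the half-integer values handled in Proposition \ref{combinatorics}. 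My second observation is Poincar\'e duality on the closed oriented base manifold $N$: $b_{k}=b_{n-k}$. I exploit both by folding the sum over $k=0,\dots,n$ via the involution $k\mapsto n-k$.

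For $\dim M$ odd, that is $n$ even, the index $(n-1)/2$ is a half-integer, no integer $k$ sits in the middle, and the involution pairs $k\in\{0,\dots,n/2-1\}$ with $n-k\in\{n/2+1,\dots,n\}$, leaving the central index $k=n/2$ unpaired. Within each pair the signs $(-1)^{k}$ and $(-1)^{n-k}$ agree, $b_{k}=b_{n-k}$ by Poincar\'e duality, and the relevant Bessel orders are the consecutive half-integers $(n-1)/2-k$ and $(n+1)/2-k$; Proposition \ref{combinatorics} expresses each $T(L_{\cdot}(\infty))$ explicitly. The lone term $k=n/2$ contributes $(-1)^{n/2}b_{n/2}T(L_{1/2}(\infty))=(-1)^{n/2}b_{n/2}\log 2$. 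Assembling the three pieces, the coefficient of $\log 2$ collects to $\tfrac{1}{2}\bigl[2\sum_{k=0}^{n/2-1}(-1)^{k}b_{k}+(-1)^{n/2}b_{n/2}\bigr]=\tfrac{1}{2}\chi(N)$, while extracting the top summand $\log(n-2k+1)$ from the longer of the two partial sums in each pair produces the two remaining claimed sums.

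For $\dim M$ even, that is $n$ odd, the index $(n-1)/2$ is a non-negative integer and the naive middle term at $k=(n-1)/2$ would demand knowledge of $T(L_{0}(\infty))$, which Proposition \ref{combinatorics} does not supply. The decisive feature here is the sign relation $(-1)^{n-k}=-(-1)^{k}$, which turns the paired contribution into the difference
\[
(-1)^{k}b_{k}\bigl[T(L_{(n-1)/2-k}(\infty))-T(L_{(n+1)/2-k}(\infty))\bigr],\qquad k=0,\dots,(n-1)/2.
\]
By the recursion above this difference equals $\log(n-2k+1)$, and multiplying by $1/2$ produces the claimed formula. The main obstacle is precisely the apparent need to evaluate $T(L_{0}(\infty))$ at the fixed point $k=(n-1)/2$; the resolution is that $T(L_{0}(\infty))$ is never evaluated on its own but cancels against $T(L_{1}(\infty))$ inside the pair $(k,n-k)=((n-1)/2,(n+1)/2)$ via the very same telescoping step, producing exactly the value $\log 2=\log(n-2\cdot(n-1)/2+1)$ predicted by the formula. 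This cancellation, enabled by Poincar\'e duality together with the odd-$n$ parity identity $(-1)^{n-k}=-(-1)^{k}$, is what explains why the even-dimensional answer is so much simpler than the odd-dimensional one.
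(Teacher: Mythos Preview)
Your proof is correct and follows essentially the same route as the paper's: fold the sum \eqref{harmonics} via Poincar\'e duality $b_{k}=b_{n-k}$, then feed in Proposition~\ref{combinatorics} (odd case) or the recursion from Propositions~\ref{spec} and~\ref{lesch-determinant} (even case). The paper's proof is terser---it simply writes down the folded sums and says ``inserting the result of Proposition~\ref{combinatorics}''---whereas you spell out explicitly why the even-dimensional case never needs the individual value $T(L_{0}(\infty))$ but only the telescoping difference $T(L_{\nu}(\infty))-T(L_{\nu+1}(\infty))=\log(2\nu+2)$; this is a genuine clarification, since Proposition~\ref{combinatorics} as stated covers only half-integer orders.
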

\begin{proof}
We infer from \eqref{harmonics} for the contribution of the harmonics on the base manifold
$$\frac{1}{2}\sum_{k=0}^{\dim M} (-1)^{k} b_k \,T(L_{|k-(n-1)/2|}(\infty)).$$ We obtain by Poincare duality on the base manifold $N$ 
\begin{align*}
&\textup{For $\dim M=n+1$ odd: }\frac{1}{2}\sum_{k=0}^{\dim M} (-1)^{k} b_k \,T(L_{|k-(n-1)/2|}(\infty))= \\ &\hspace{20mm}=
\frac{1}{2}\sum_{k=0}^{n/2-1} (-1)^{k} b_k \,(T(L_{n/2-k-1/2}(\infty))+T(L_{n/2-k+1/2}(\infty))), \\
&\textup{For $\dim M=n+1$ even: }\frac{1}{2}\sum_{k=0}^{\dim M} (-1)^{k} b_k \,T(L_{|k-(n-1)/2|}(\infty))= \\ &\hspace{20mm}=
\frac{1}{2}\sum_{k=0}^{(n-1)/2} (-1)^{k} b_k \,(T(L_{n/2-k-1/2}(\infty))-T(L_{n/2-k+1/2}(\infty))).
\end{align*}
Inserting the result of Proposition \ref{combinatorics} into the expressions above, we obtain the statement.
\end{proof}

%*********************************************************
\section{Total Result and Formulas in lower Dimensions}\label{an-torsion-general}\
%*********************************************************
\\[-3mm] Patching together the results of the both preceeding sections we can now provide a complete formula for the analytic torsion of a bounded generalized cone. In fact we simply have to add up the results of Theorem \ref{harm-odd-even} and Corollary \ref{total-contribution-1}. In even dimensions one has to be careful in the middle degree, as explained in Remark \ref{double-count}.
\begin{thm}\label{final-odd}
Let $M=(0,1]\times N, g^M=dx^2\oplus x^2g^N$ be an odd-dimensional bounded generalized cone over a closed oriented Riemannian manifold $(N,g^N)$. Let the metric on the base manifold $N$ be scaled such that the non-zero eigenvalues of the form-valued Laplacians on $N$ are bigger than one. Introduce the notation $n=\dim N, \ \A_k=(n-1)/2-k$ and $b_k=\dim \mathcal{H}^k(N)$. Put 
\begin{align*}
F_k:=\{\xi \in \R^+ \ | \ \xi^2=\eta +\left(k+1/2-n/2\right)^2, \eta \in \textup{Spec}\triangle_{k,ccl,N}\backslash\{0\} \}, \\
\zeta_{k,N}(s)=\sum_{\nu \in F_k} \nu^{-s},\quad \zeta_{k,N}(s, \A):=\sum_{\nu\in F_k} (\nu+\A)^{-s}, \quad Re(s)\gg0.
\end{align*}
Then the logarithm of the scalar analytic torsion of $M$ is given by
\begin{align*}
\log T(M)= \frac{\log 2}{2}\chi(N)-\sum_{k=0}^{n/2-1}(-1)^kb_k\sum_{l=0}^{n/2-k-1}\log (2l+1)-\\-\frac{1}{2}\sum_{k=0}^{n/2-1}(-1)^kb_k\log (n-2k+1)+\sum_{k=0}^{n/2-1}\frac{(-1)^k}{2} (\zeta_{k,N}'(0,\A_k)-\zeta_{k,N}'(0,-\A_k))+\\+ \sum_{k=0}^{n/2-1}\frac{(-1)^k}{2} \sum_{i=1}^n(-1)^{i+1}\frac{\A_k^i-(-\A_k)^i}{i}\textup{Res}\zeta_{k,N}(i)\left\{\frac{\gamma}{2}+\frac{\Gamma'(i)}{\Gamma(i)}\right\}+ \\+ \sum_{k=0}^{n/2-1}\frac{(-1)^k}{2}
\sum_{i=1}^n\frac{1}{2}\,\textup{Res}\zeta_{k,N}(i)\sum_{b=0}^{i}\left(z_{i,b}(-\A_k)-z_{i,b}(\A_k)\right)\frac{\Gamma'(b+i/2)}{\Gamma (b+i/2)}.
\end{align*} 
\end{thm}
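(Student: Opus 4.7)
The plan is to assemble the formula by adding up the contributions from the two types of subcomplexes in the Hodge-type decomposition of the de Rham complex of $M$. Recall from Section 3 that the de Rham complex splits completely into subcomplexes of type \eqref{complex-1}, generated by coclosed eigenforms on $N$, and subcomplexes of type \eqref{complex-2}, generated by harmonic forms on $N$. Since the relative self-adjoint extension $\triangle_k^{rel}$ is compatible with this decomposition, the scalar analytic torsion $\log T(M)$ is simply the sum of the contributions of each subcomplex to the meromorphically continued $T(M,s)$ from \eqref{T-function}, differentiated at $s=0$.

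First I would treat the subcomplexes of the first type. Since $\dim M = n+1$ is odd, $n$ is even and Remark \ref{double-count} gives their total contribution as $\frac{1}{2}\sum_{k=0}^{n/2-1}(-1)^k \zeta'_k(0)$, with no middle-degree correction needed. For each fixed degree $k \le n/2-1$, the value $\zeta'_k(0)$ is exactly what Corollary \ref{total-contribution-1} computes in the odd-dimensional case, expressed in terms of $\zeta_{k,N}'(0,\pm\A_k)$, residues of $\zeta_{k,N}$ at positive integers, the digamma values $\Gamma'(i)/\Gamma(i)$ and $\Gamma'(b+i/2)/\Gamma(b+i/2)$, and the polynomial coefficients $z_{i,b}(\pm\A_k)$ coming from the uniform Bessel expansion via Lemma \ref{f-r} and Lemma \ref{ff}.

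Next I would add the contribution from the subcomplexes of the second type, which by Theorem \ref{harm-odd-even} in the odd-dimensional case equals
\begin{equation*}
\frac{\log 2}{2}\chi(N)-\sum_{k=0}^{n/2-1}(-1)^k b_k \sum_{l=0}^{n/2-k-1}\log(2l+1)-\frac{1}{2}\sum_{k=0}^{n/2-1}(-1)^k b_k \log(n-2k+1).
\end{equation*}
This reflects, via Poincar\'e duality on $N$, the pairing of degrees $k$ and $n-k$, so that the contribution of each pair of harmonic subcomplexes can be written as a combination of $T(L_{n/2-k\pm 1/2}(\infty))$, which are made explicit by Proposition \ref{combinatorics}.

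Finally, I would simply write $\log T(M)$ as the sum of these two blocks. No further analytic work is required, since each ingredient has already been established; the task is just bookkeeping of signs and indices. The main point to be careful about is the compatibility of index ranges: $k$ runs from $0$ to $n/2-1$ because this is where the coclosed-eigenform contribution is computed (recall the restriction $k<\dim N - 1$ in the paragraph preceding Remark \ref{double-count}, together with Poincar\'e duality on $N$ absorbing the remaining degrees), and the factor $1/2$ in front of $\sum(-1)^k\zeta_k'(0)$ from Remark \ref{double-count} combines cleanly with the definition of $\zeta_k(s)$ in Definition \ref{zeta} to produce the overall $(-1)^k/2$ weight appearing in the statement. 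The only potential obstacle is verifying this overall normalization and confirming that no double counting occurs when matching the zeta-function contributions in degrees $k$ and $n-k$; once this is checked, substituting Corollary \ref{total-contribution-1} and Theorem \ref{harm-odd-even} into $\log T(M) = \frac{1}{2}\sum_{k=0}^{n/2-1}(-1)^k \zeta'_k(0) + \text{(harmonic contribution)}$ yields the displayed formula term for term.
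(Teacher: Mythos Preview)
Your proposal is correct and follows exactly the paper's approach: the theorem is obtained by combining the contribution of the type-I subcomplexes via Remark \ref{double-count} and Corollary \ref{total-contribution-1} with the harmonic contribution from Theorem \ref{harm-odd-even}. The paper's own justification is nothing more than the one-line remark that one simply has to add up these two results, so your write-up is in fact more detailed than what appears there.
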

\begin{thm}\label{final-even}
Let $M=(0,1]\times N, g^M=dx^2\oplus x^2g^N$ be an even-dimensional bounded generalized cone over a closed oriented Riemannian manifold $(N,g^N)$. Let the metric on the base manifold $N$ be scaled such that the non-zero eigenvalues of the form-valued Laplacians on $N$ are bigger than one. Introduce the notation $n=\dim N, \ \A_k=(n-1)/2-k$ and $b_k=\dim \mathcal{H}^k(N)$. Put 
\begin{align*}
F_k:=\{\xi \in \R^+ \ | \ \xi^2=&\eta +\left(k+1/2-n/2\right)^2, \eta \in \textup{Spec}\triangle_{k,ccl,N}\backslash\{0\} \}, \\
\zeta_{k,N}(s)&:=\sum_{\nu \in F_k} \nu^{-s},\quad \zeta_{k,N}(s, \A):=\sum_{\nu\in F_k} (\nu+\A)^{-s}, \quad Re(s)\gg0.
\\ \delta_k&:=\left\{ \begin{array}{cl} 1/2 & \textup{if} \ k=(n-1)/2, \\ 1 & \textup{otherwise}.\end{array}\right.
\end{align*}
Then the logarithm of the scalar analytic torsion of $M$ is given by
\begin{align*}
\log T(M)= \!\!\sum_{k=0}^{(n-1)/2}\!\frac{(-1)^k}{2}\left[b_k\log (n-2k+1)+\delta_k
\zeta_{k,N}'(0,\A_k)+\delta_k\zeta_{k,N}'(0,-\A_k)\right]\\+\sum_{k=0}^{(n-1)/2}\frac{(-1)^k}{2} \delta_k \sum_{i=1}^n(-1)^{i+1}\frac{\A_k^i+(-\A_k)^i}{i}\textup{Res}\zeta_{k,N}(i)\left\{\frac{\gamma}{2}+\frac{\Gamma'(i)}{\Gamma(i)}\right\}+ \\+ \sum_{k=0}^{(n-1)/2}\frac{(-1)^k}{2} \delta_k\sum_{i=1}^n\frac{1}{2}\,\textup{Res}\zeta_{k,N}(i)\sum_{b=0}^{i}\left(2x_{i,b}-z_{i,b}(-\A_k)-\right.\\ \left.-z_{i,b}(\A_k)\right)\frac{\Gamma'(b+i/2)}{\Gamma (b+i/2)}.
\end{align*} 
\end{thm}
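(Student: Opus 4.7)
The plan is to patch together the two streams of computation already completed earlier in the paper: on the one hand, the contribution of the generic subcomplexes of the first type, packaged into the zeta-functions $\zeta_k(s)$ of Definition \ref{zeta} and differentiated at $s=0$ in Corollary \ref{total-contribution-1}, and on the other hand, the harmonic contribution from the subcomplexes of the second type, evaluated in Theorem \ref{harm-odd-even}. For $\dim M$ even, both ingredients are already available in the required explicit form; the theorem is essentially an assembly statement.

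First I would write
\begin{align*}
\log T(M) = \sum_{k} w_k\,\frac{(-1)^{k}}{2}\,\zeta'_k(0) \;+\; \frac{1}{2}\sum_{k=0}^{(n-1)/2}(-1)^k b_k \log(n-2k+1),
\end{align*}
where the second sum is exactly the even-dimensional harmonic contribution of Theorem \ref{harm-odd-even}, and the weights $w_k$ record the multiplicity with which $\zeta_k(s)$ counts each first-type subcomplex. By the compatibility of the $\psi \leftrightarrow *_N \psi$ twin symmetry with the decomposition, each first-type subcomplex contributes to exactly one $\zeta_k(s)$ with $k \le (n-1)/2$, except that at the middle degree $k = (n-1)/2$ of an even-dimensional cone the twin-subcomplex coincides with the original subcomplex, and so is counted twice. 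This is exactly Remark \ref{double-count}, and it forces $w_{(n-1)/2} = 1/2$ while $w_k = 1$ for $0 \le k \le (n-3)/2$, which is precisely the factor $\delta_k$ in the statement.

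Next I would substitute the even-dimensional expression for $\zeta'_k(0)$ from Corollary \ref{total-contribution-1} into each term, namely
\begin{align*}
\zeta'_k(0) = \zeta'_{k,N}(0,\A_k) + \zeta'_{k,N}(0,-\A_k) + (\text{$\gamma$-term}) + (\text{polynomial term}),
\end{align*}
and collect degree-by-degree. The three lines of the claimed formula then match the three structural pieces: the log-term $b_k\log(n-2k+1)$ together with $\delta_k(\zeta'_{k,N}(0,\A_k)+\zeta'_{k,N}(0,-\A_k))$; the line with $\{\gamma/2 + \Gamma'(i)/\Gamma(i)\}$ carrying $\A_k^i + (-\A_k)^i$; and the polynomial line involving $2x_{i,b} - z_{i,b}(\pm \A_k)$.

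The main obstacle, and really the only nontrivial bookkeeping, is correctly identifying the weight $w_k = \delta_k$ in the middle degree. Everything else is routine substitution; no new analytic input is required beyond Corollary \ref{total-contribution-1} and Theorem \ref{harm-odd-even}. In particular, once $\delta_k$ is pinned down via Remark \ref{double-count}, the four displayed sums in the theorem follow by a single linear combination of already-established identities.
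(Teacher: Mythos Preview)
Your proposal is correct and follows essentially the same approach as the paper: the paper states explicitly that one ``simply has to add up the results of Theorem \ref{harm-odd-even} and Corollary \ref{total-contribution-1}'' with the caveat that ``in even dimensions one has to be careful in the middle degree, as explained in Remark \ref{double-count},'' which is precisely the assembly you describe. Your identification of $w_k=\delta_k$ via the twin-subcomplex double-counting at $k=(n-1)/2$ matches the paper's reasoning exactly.
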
 \ \\
The formula could not be made further explicit due to presence of coefficients $x_{r,b}$ and $z_{r,b}(\pm \A_k)$, arising from the polynomials
\begin{align*}
D_r(t)=\sum_{b=0}^{r}x_{r,b}t^{r+2b},\quad M_r(t, \pm \A)=\sum_{b=0}^{r}z_{r,b}(\pm \A)t^{r+2b},
\end{align*}
which were introduced in the expansions \eqref{polynom1} and \eqref{polynom2}. These polynomials can be computed explicitly for any given order $r\in \N$. To point out the applicability of the general results we pursue explicit computations in dimension two and three. We continue in the notation of the theorems above.
\begin{cor}\label{two-dim}
Let $M$ be a two-dimensional bounded generalized cone of length one over a closed oriented manifold $N$ with a metric scaled as in Theorem \ref{final-even}. Then the analytic torsion of $M$ is given by 
$$\log T(M)=\frac{1}{2}\dim H^0(N) \log 2 + \frac{1}{2}\zeta'_{0,N}(0)-\frac{1}{4}\textup{Res}\zeta_{0,N}(s=1).$$ In the special case of $N=S^1$ we obtain
$$\log T(M)=\frac{1}{2}\left(-\log \pi -1\right).$$
\end{cor}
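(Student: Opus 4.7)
The plan is a two-step specialization of Theorem~\ref{final-even}. First, set $n=\dim N=1$, then insert the spectral data for $N=S^1$.

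Step one: collapse of the general formula. With $n=1$, the outer sum retains only $k=0$ and each inner sum only $i=1$. One reads off $\A_0=(n-1)/2=0$ and $\delta_0=1/2$. Since $\A_0=0$, the combination $(-1)^{i+1}(\A_0^i+(-\A_0)^i)/i$ vanishes for every $i\ge 1$, killing the digamma-of-integer sum. The shifted zeta-functions collapse as $\zeta_{0,N}'(0,\pm\A_0)=\zeta_{0,N}'(0)$, which together with $\delta_0$ and the outer $1/2$ contribute $\tfrac12\zeta_{0,N}'(0)$. The harmonic term reduces to $\tfrac12 b_0\log 2$ since $\log(n-2k+1)=\log 2$.

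Step two: the polynomial residue at $i=1$. From the standard Olver expansions one reads off $D_1(t)=u_1(t)=(3t-5t^3)/24$ and $M_1(t,0)=v_1(t)=(-9t+7t^3)/24$, giving $x_{1,0}=1/8$, $x_{1,1}=-5/24$, $z_{1,0}(0)=-3/8$, $z_{1,1}(0)=7/24$, so that $2x_{1,b}-2z_{1,b}(0)$ equals $+1$ for $b=0$ and $-1$ for $b=1$. Using the digamma values $\Gamma'(1/2)/\Gamma(1/2)=-\gamma-2\log 2$ and $\Gamma'(3/2)/\Gamma(3/2)=-\gamma-2\log 2+2$, the $b$-sum reduces cleanly to $-2$, the $-\gamma-2\log 2$ contributions cancelling by sign. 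Multiplying by the overall prefactor $\tfrac12\cdot\delta_0\cdot\tfrac12=\tfrac18$ and by $\textup{Res}\,\zeta_{0,N}(1)$ yields $-\tfrac14\,\textup{Res}\,\zeta_{0,N}(s=1)$, completing the first identity.

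Step three: substitution for $N=S^1$. On a circle of length $2\pi/c$, scaled so that $\eta>1$, the nonzero scalar spectrum is $\{c^2m^2:m\ge 1\}$ with multiplicity $2$, so $F_0=\{cm:m\in\N\}$ with multiplicity $2$ and $\zeta_{0,N}(s)=2c^{-s}\zeta_R(s)$. Hence $\zeta_{0,N}'(0)=\log c-\log 2\pi$, $\textup{Res}\,\zeta_{0,N}(1)=2/c$, and $b_0=1$. Feeding these into the first formula gives $\tfrac12\log c-\tfrac12\log\pi-\tfrac{1}{2c}$, which reproduces the stated value $\tfrac12(-\log\pi-1)$ precisely at $c=1$.

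The principal obstacle is exactly this final identification: the hypothesis $\eta>1$ forbids $c=1$, yet the announced answer is the one obtained by formally setting $c=1$. Justifying this requires either invoking the removal of the scaling restriction discussed in the Remark of Section~\ref{symmetry-decomposition} (with the finite direct sum of model Laplacians in the limit circle case handled separately), or appealing to the independent calculation in Section~\ref{an-torsion-sphere}; either way the two-dimensional sphere case lies outside the immediate reach of Theorem~\ref{final-even} and needs this extra justification.
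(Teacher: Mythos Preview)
Your proof is correct and follows essentially the same route as the paper: specialize Theorem~\ref{final-even} to $n=1$, $k=0$, $\A_0=0$, $\delta_0=1/2$, compute the $b$-sum from the explicit coefficients of $D_1$ and $M_1(\cdot,0)$ together with the digamma values at $1/2$ and $3/2$, and then substitute the $S^1$ spectral data.

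You are in fact more careful than the paper on one point. The paper's own proof of the $S^1$ case simply writes $\zeta_{0,N}(s)=2\zeta_R(s)$ and plugs in $\zeta_R'(0)=-\tfrac12\log 2\pi$, $\textup{Res}\,\zeta_R(1)=1$, without commenting on the scaling hypothesis. Your observation that this amounts to setting $c=1$, which formally violates $\eta>1$, is correct; the paper implicitly relies on the removal of this restriction (via the discussion in [BV1] alluded to in the Remark of Section~\ref{symmetry-decomposition}, or alternatively on the independent verification in Section~\ref{an-torsion-sphere}, which at $\nu=R=1$ reproduces the same value). So your ``principal obstacle'' is a genuine caveat that the paper leaves tacit rather than a defect in your argument.
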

\begin{proof}
In the two-dimensional case the general formula of Theorem \ref{final-even} reduces to the following expression:
\begin{align*}
\log T(M)=\frac{1}{2}\dim H^0(N)\log 2+\frac{1}{4}\zeta'_{0,N}(0,\A_0)+\frac{1}{4}\zeta'_{0,N}(0,-\A_0) +\\+ \frac{1}{8}\textup{Res}\,\zeta_{0,N}(1)\left[\sum_{b=0}^1\left(2x_{1,b}-z_{1,b}(-\A_k)-z_{1,b}(\A_k)\right)\frac{\Gamma'(b+1/2)}{\Gamma (b+1/2)}\right].
\end{align*}
Now we evaluate the combinatorial factor of $\textup{Res}\,\zeta_{0,N}(1)$ by considering the following formulas, encountered in [BGKE, Section 2-3]
\begin{align}
&D_1(t)=\sum_{b=0}^{1}x_{1,b}t^{1+2b}=\frac{1}{8}t-\frac{5}{24}t^3, \nonumber \\ &M_1(t, \A)=\sum_{b=0}^{1}z_{1,b}(\pm \A)t^{1+2b}=\left(-\frac{3}{8}+\A\right )t+\frac{7}{24}t^3.
\label{m-1-a}
\end{align}
Further one needs the following values (calculated from the known properties of Gamma functions)
$$\frac{\Gamma'(1/2)}{\Gamma(1/2)}=-(\gamma+2\log 2), \quad \frac{\Gamma'(3/2)}{\Gamma(3/2)}=2-(\gamma+2\log 2).$$
Finally one observes $\A_0=0$ in this setting. This easily leads to the first formula in the statement of corollary. The second formula follows from the first by $$\zeta_{0,N}(s)=2\zeta_R(s),$$ where the factor $2$ comes from the fact that the eigenvalues $n^2$ of the Laplacian $\triangle_{k=0,S^1}$ are of multiplicity two for $n\neq 0$. The Riemann zeta function has the following special values
$$\zeta'_R(0)=-\frac{1}{2}\log 2 \pi, \quad \textup{Res}\,\zeta_R(1)=1,$$ which gives the second formula.
\end{proof}

\begin{cor}\label{three-dim}
Let $M$ be a three-dimensional bounded generalized cone of length one over a closed oriented manifold $N$ with a metric scaled as in Theorem \ref{final-odd}. Then the analytic torsion of $M$ is given by
\begin{align*}
\log T(M)=\frac{\log 2}{2}\chi(N)-\frac{\log 3}{2}\dim H^0(N)+ \frac{1}{2}\zeta'_{0,N}(0,1/2)- \\ -\frac{1}{2}\zeta'_{0,N}(0,-1/2)+\frac{\log 2}{2}\textup{Res}\, \zeta_{0,N}(1) +\frac{1}{16}\textup{Res}\, \zeta_{0,N}(2).
\end{align*}
\end{cor}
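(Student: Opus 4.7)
The plan is to specialize Theorem \ref{final-odd} to the case $\dim M = 3$, so that $n = \dim N = 2$ and hence $\A_0 = (n-1)/2 = 1/2$. With $n=2$ every outer sum $\sum_{k=0}^{n/2-1}$ collapses to the single term $k=0$, and the inner sum $\sum_{l=0}^{n/2-k-1}\log(2l+1)$ reduces to $\log 1 = 0$. The first, third and fourth summands of Theorem \ref{final-odd} therefore yield directly the contributions $\tfrac{\log 2}{2}\chi(N)$, $-\tfrac{\log 3}{2}\dim H^0(N)$ and $\tfrac12(\zeta_{0,N}'(0,1/2) - \zeta_{0,N}'(0,-1/2))$ appearing in the statement. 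The only non-trivial work concerns the two remaining residue sums indexed by $i=1,2$.

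For $i=1$ the polynomials $D_1(t) = \tfrac18 t - \tfrac{5}{24}t^3$ and $M_1(t,\A) = (-\tfrac38 + \A)t + \tfrac{7}{24}t^3$ are the ones already recorded in Corollary \ref{two-dim}, so the differences $z_{1,b}(-1/2) - z_{1,b}(1/2)$ isolate the linear-in-$\A$ part and equal $-1$ for $b=0$ and $0$ for $b=1$. For $i=2$ I would exploit the expansions \eqref{polynom1}--\eqref{polynom2} to write
\begin{equation*}
M_2(t,\A) = v_2(t) + \A\, t\, u_1(t) - \tfrac12\bigl(v_1(t)+\A t\bigr)^2,
\end{equation*}
whose part odd in $\A$ is $\A\, t(u_1(t) - v_1(t)) = \A\bigl(\tfrac{t^2}{2} - \tfrac{t^4}{2}\bigr)$; this immediately yields the three differences $z_{2,0}(-1/2) - z_{2,0}(1/2) = -1/2$, $z_{2,1}(-1/2) - z_{2,1}(1/2) = 1/2$ and $z_{2,2}(-1/2) - z_{2,2}(1/2) = 0$.

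With these coefficients in hand the last step is to insert the standard digamma values $\Gamma'(1/2)/\Gamma(1/2) = -\gamma - 2\log 2$, $\Gamma'(3/2)/\Gamma(3/2) = 2 - \gamma - 2\log 2$, $\Gamma'(1)/\Gamma(1) = -\gamma$ and $\Gamma'(2)/\Gamma(2) = 1 - \gamma$ into the fifth and sixth summands of Theorem \ref{final-odd} (equivalently, into the explicit formula of Corollary \ref{total-contribution-1} with $k=0$) and then apply the prefactor $(-1)^0/2$ dictated by Remark \ref{double-count}. The main obstacle is purely bookkeeping of signs and factors of $\tfrac12$; in particular one has to verify that the Euler--Mascheroni terms arising separately from the fifth and sixth summands cancel against each other, leaving only the rational multiples $\tfrac{\log 2}{2}\,\textup{Res}\,\zeta_{0,N}(1)$ and $\tfrac{1}{16}\,\textup{Res}\,\zeta_{0,N}(2)$ that appear in the statement. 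This cancellation of $\gamma$ is forced by the regularity of $\zeta_0(s)$ at $s=0$ established in Proposition \ref{P} and serves as a convenient consistency check on the computation.
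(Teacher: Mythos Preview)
Your approach is essentially the same as the paper's: specialize Theorem \ref{final-odd} to $n=2$, read off the differences $z_{i,b}(-\A_0)-z_{i,b}(\A_0)$ for $i=1,2$, insert the standard digamma values, and simplify. The only minor variation is that the paper quotes the full polynomial $M_2(t,\A)$ from [BGKE, (3.6), (3.7)], whereas you extract just its odd-in-$\A$ part directly from the expansion \eqref{polynom2}; both routes yield the same differences $-\tfrac12,\ \tfrac12,\ 0$ and the same final formula.
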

\begin{proof}
In the three-dimensional case the general formula of Theorem \ref{final-odd} reduces to the following expression:
\begin{align*}
\log T(M)=\frac{\log 2}{2}\chi(N)-\frac{\log 3}{2}\dim H^0(N)+ \\+\frac{1}{2}\left(\zeta'_{0,N}(0,\A_0)-\zeta'_{0,N}(0,-\A_0)\right) +\A_0 \textup{Res}\,\zeta_{0,N}(1)\left[\frac{\gamma}{2}+\frac{\Gamma'(1)}{\Gamma(1)}\right] + \\+ \frac{1}{4}\sum_{i=1}^2\textup{Res}\,\zeta_{0,N}(i)\sum_{b=0}^i\left(z_{i,b}(-\A_k)-z_{i,b}(\A_k)\right)\frac{\Gamma'(b+i/2)}{\Gamma (b+i/2)}.
\end{align*}
Now we simply evaluate the last combinatorial sum by considering formulas from [BGKE, (3.6), (3.7)]
\begin{align*}
&M_1(t, \A)=\sum_{b=0}^{1}z_{1,b}(\pm \A)t^{1+2b}=\left(-\frac{3}{8}+\A\right)t+\frac{7}{24}t^3,\\
&M_2(t, \A)=\sum_{b=0}^{2}z_{2,b}(\pm \A)t^{2+2b}=\left(-\frac{3}{16}+\frac{\A}{2}-\frac{\A^2}{2}\right)t^2+\left(\frac{5}{8}-\frac{\A}{2}\right)t^4-\frac{7}{16}t^6.
\end{align*}
We further need the values
\begin{align*}
&\frac{\Gamma'(1)}{\Gamma(1)}=-\gamma, \quad \frac{\Gamma'(1/2)}{\Gamma(1/2)}=-(\gamma+2\log 2),\\ &\frac{\Gamma'(2)}{\Gamma(2)}=1-\gamma, \quad \frac{\Gamma'(3/2)}{\Gamma(3/2)}=2-(\gamma+2\log 2).
\end{align*}
This leads together with $\A_0=1/2$ in the three-dimensional case to the following formula
\begin{align}
\log T(M)=\frac{\log 2}{2}\chi(N)-\frac{\log 3}{2}\dim H^0(N)+ \nonumber \\ +\frac{1}{2}\left(\zeta'_{0,N}(0,1/2)-\zeta'_{0,N}(0,-1/2)\right)-\frac{\gamma}{4}\textup{Res}\, \zeta_{0,N}(1) +\nonumber \\+\frac{1}{4}\left(\textup{Res}\, \zeta_{0,N}(1)[\gamma +2\log 2]+\frac{1}{4}\textup{Res}\, \zeta_{0,N}(2)\right).\label{hilfe-referenz2}
\end{align}
Obvious cancellations in the formula above prove the result.
\end{proof}

%*********************************************************
\section{Analytic torsion of a cone over $S^1$}\label{an-torsion-sphere} \ \\
%*********************************************************
\\[-6mm] The preceeding computations reduce in the two-dimensional case simply to the computation of the analytic torsion of a disc. In order to deal with a generalized bounded cone in two dimensions, which is not simply a flat disc, we need to introduce an additional parameter in the Riemannian metric. So in two dimensions the setup is as follows.
\\[3mm] Let $M:=(0,R]\times S^1$ with $$g^M=dx^2\oplus\nu^{-2}x^2g^{S^1}$$ be a bounded generalized cone over $S^1$ of angle arcsec$(\nu)$ and length $1$, with a fixed orientation and with a fixed parameter $\nu\geq 1$. 
\begin{figure}[h]
\begin{center}
\includegraphics[width=0.7\textwidth]{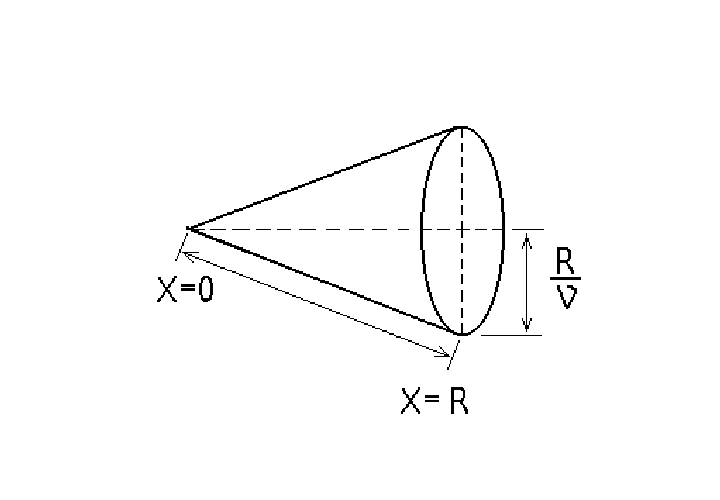}
\caption{A bounded cone of angle arcsec$(\nu), \nu\geq 1$ and length $R$. }
\label{cone-picture}
\end{center}
\end{figure}\ 
\\[-2mm] The main result of our discussion in this part of the presentation is then the following theorem:
\begin{thm}\label{main}
The analytic torsion $T(M)$ of a bounded generalized cone $M$ of length $R$ and angle $\textup{arcsec} \, \nu > 0$ over $S^1$ is given by $$2\log{T(M)}=-\log(\pi R^2) +\log \nu -\frac{1}{\nu}.$$
\end{thm}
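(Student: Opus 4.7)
My plan is to specialize the Spreafico machinery of Sections~\ref{auxiliary}--\ref{an-torsion-general} to the two-dimensional cone over $S^1$. The symmetry of the de Rham complex from Section~\ref{symmetry-decomposition} plays no role here because no subcomplex of type~\eqref{complex-1} exists when $\dim N = 1$. First I would handle the length parameter by the scaling $x = Ry$, which identifies $g^M$ with $R^2$ times the unit-length cone metric. Laplacian eigenvalues then scale by $1/R^2$, so $\zeta_k(s;R) = R^{2s}\zeta_k(s;1)$ and differentiation at $s = 0$ yields
\begin{equation*}
2\log T(M) \,=\, 2\log T(M)\big|_{R=1} \,+\, 2(\log R)\,\bigl(-\zeta_1(0;1) + 2\zeta_2(0;1)\bigr).
\end{equation*}
The anomaly coefficient here should equal $-1$, accounting for the $-\log R^2$ term in the stated formula; this can be extracted from the heat-kernel coefficient $a_1$ on the cone or recovered as a byproduct of the explicit Bessel computation below.

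For the unit-length cone, I would decompose forms into Fourier modes $e^{in\theta}$ on $S^1$ and apply the separation of variables $\Psi_k$ of Proposition~\ref{unitary}. On the $n$-th mode, \eqref{derivative}--\eqref{coderivative} turn the $k$-form Laplacian into a Bessel-type operator $l_{\nu|n|}$ on $(0,1)$: Dirichlet at $x = 1$ for 0-forms, generalized Neumann for 2-forms, with the 1-form spectrum fixed by the de Rham relation $\zeta_1 = \zeta_0 + \zeta_2$ mode by mode. For $n \neq 0$ the order $\nu|n|$ is $\geq 1$, placing $l_{\nu|n|}$ in the limit-point case at $x = 0$, and Corollary~\ref{bessel-zeros} identifies the eigenvalues with squares of zeros of $J_{\nu|n|}$ (resp.\ its Neumann variant). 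The $n = 0$ Fourier mode corresponds to the harmonics on the base $S^1$ and constitutes a subcomplex of type~\eqref{complex-2}; by Proposition~\ref{combinatorics}, its contribution to the torsion is $\tfrac{1}{2}\log 2$.

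Applying~\eqref{integral-spreafico} mode by mode, summing on $n$, and invoking the large-order Bessel asymptotics~\eqref{polynom1}--\eqref{polynom2} with the explicit polynomials~\eqref{m-1-a} puts the problem into the two-dimensional format of Corollary~\ref{two-dim}. The key $\nu$-dependent input is the base zeta function $\zeta_{0,N}(s) = 2\nu^{-s}\zeta_R(s)$, which reflects that the metric $\nu^{-2}g^{S^1}$ has Laplacian spectrum $\{\nu^2 n^2 : n\in\Z\setminus\{0\}\}$ with double multiplicity. From $\zeta_R(0) = -\tfrac{1}{2}$, $\zeta'_R(0) = -\tfrac{1}{2}\log(2\pi)$ and $\mathrm{Res}_{s=1}\zeta_R(s) = 1$ I obtain $\zeta'_{0,N}(0) = \log\nu - \log(2\pi)$ and $\mathrm{Res}_{s=1}\zeta_{0,N}(s) = 2/\nu$. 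Substituting into Corollary~\ref{two-dim} gives
\begin{equation*}
\log T(M)\big|_{R=1} \,=\, \tfrac{1}{2}\log 2 + \tfrac{1}{2}\bigl(\log\nu - \log(2\pi)\bigr) - \tfrac{1}{4}\cdot\tfrac{2}{\nu} \,=\, \tfrac{1}{2}\bigl(\log\nu - \log\pi - 1/\nu\bigr),
\end{equation*}
and combining with the $R$-scaling produces $2\log T(M) = -\log(\pi R^2) + \log\nu - 1/\nu$.

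The main obstacle is the borderline case $\nu = 1$: Corollary~\ref{two-dim} was derived under the strict scaling condition $\nu > 1$, while Theorem~\ref{main} permits equality. At $\nu = 1$ the Bessel order on the $n = 1$ mode is exactly $1$, which is still limit-point at $x = 0$, so the spectral setup and Corollary~\ref{bessel-zeros} apply. One needs to verify that the Spreafico contour integrals and the asymptotic manipulations extend continuously to this endpoint, which can be done by a limiting argument $\nu \to 1^+$. As a consistency check, specializing to $R = \nu = 1$ recovers $\log T(M) = \tfrac{1}{2}(-\log\pi - 1)$ of Corollary~\ref{two-dim}, which is the analytic torsion of the standard flat unit disc.
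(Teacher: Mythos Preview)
Your approach is correct and genuinely different from the paper's. The paper carries out the Spreafico computation from scratch in Section~\ref{an-torsion-sphere}: it separately computes $\zeta'_{\triangle^0_{\textup{rel}}}(0)$ (Dirichlet at the base, citing [S]) and $\zeta'_{\triangle^2_{\textup{rel}}}(0)$ (generalized Neumann, worked out via four lemmas on Bessel zeros, the integral representation \eqref{int-rep}, and the auxiliary function $\eta(s,\nu)$), then subtracts. General $R$ is kept throughout. You instead observe that $((0,1]\times S^1,\,dx^2\oplus\nu^{-2}x^2g^{S^1})$ is exactly a bounded generalized cone over $N=(S^1,\nu^{-2}g^{S^1})$ in the sense of Sections~3--8, so Corollary~\ref{two-dim} applies verbatim with $\zeta_{0,N}(s)=2\nu^{-s}\zeta_R(s)$; a global rescaling then handles $R$. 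This is a substantial shortcut and buys you the $\nu$-dependence essentially for free, at the cost of reusing the general machinery (including the scaling hypothesis $\nu>1$) rather than giving an independent verification as the paper does.

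Two points you should tighten. First, the $R$-scaling coefficient $-\zeta_1(0;1)+2\zeta_2(0;1)=\zeta_2(0;1)-\zeta_0(0;1)$ is asserted to equal $-1$ but not computed; your ``byproduct of the explicit Bessel computation below'' does not materialize because you bypass that computation via Corollary~\ref{two-dim}. You can get this value either from the even-dimensional formula in Proposition~\ref{zeta-total} (the $a^k_\nu=-1$ term and the $n=0$ harmonic contribution), or from the known heat-kernel coefficient $a_1$ on the two-disc. Second, the case $\nu=1$ lies exactly on the boundary of the scaling hypothesis; you are right that the relevant Bessel order is $1$ so the limit-point case still holds, and continuity in $\nu$ from $\nu>1$ is a legitimate argument, but it deserves a sentence more justification (e.g.\ that the intermediate quantities in Proposition~\ref{zeta-total} depend analytically on the spectral data of $N$).
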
 \ \\
\\[-7mm] This result corresponds precisely to the result obtained in Corollary \ref{two-dim} for the special case $\nu=1$ (for $R=1$). In fact this result can also be derived from [BGKE, Section 5]. This setup was considered by Spreafico in [S]. However [S] deals only with Dirichlet boundary conditions at the cone base. So we extend his approach to the Neumann boundary conditions in order to obtain an overall result for the analytic torsion of this specific cone manifold. We proceed as follows.
\\[3mm] Denote forms with compact support in the interior of $M$ by $\Omega^*_0(M)$. The associated de Rham complex is given by $$0\to \Omega^0_0(M) \xrightarrow{d_0} \Omega^1_0(M)\xrightarrow{d_1}\Omega^2_0(M)\to 0.$$ 
Consider the following maps 
\begin{align*}
\Psi_0:C^{\infty}_0((0,R)&,\Omega^0(S^1))\to \Omega^0_0(M), \\
&\phi \quad \mapsto \quad x^{-1/2}\phi. \\
\Psi_2:C^{\infty}_0((0,R)&,\Omega^1(S^1))\to \Omega^2_0(M), \\
&\phi \quad \mapsto \quad x^{1/2}\phi\wedge dx, \\
\end{align*}
where $\phi$ is identified with its pullback to $M$ under the natural projection $\pi: (0,R]\times N\to N$ onto the second factor, and $x$ is the canonical coordinate on $(0,R]$. We find 
\begin{align*}
\triangle^0:=\Psi_0^{-1}d_0^td_0\Psi_0 = -\frac{d^2}{dx^2}+\frac{1}{x^2}\left(-\nu^2\partial^2_{\theta}-\frac{1}{4}\right) \quad \textup{on} \ C^{\infty}_0((0,R),\Omega^0(S^1)), \\
\triangle^2:=\Psi_2^{-1}d_1d_1^t\Psi_2 = -\frac{d^2}{dx^2}+\frac{1}{x^2}\left(-\nu^2\partial^2_{\theta}-\frac{1}{4}\right) \quad \textup{on} \ C^{\infty}_0((0,R),\Omega^1(S^1)).
\end{align*}
where $\theta$ is the local variable on the one-dimensional sphere. In fact both maps $\Psi_0$ and $\Psi_2$ extend to isometries on the $L^2-$completion of the spaces, by similar arguments as behind Proposition \ref{unitary}. Now consider the minimal extensions $D_k:=d_{k,\min}$ of the boundary operators $d_k$ in the de Rham complex $(\Omega_0^*(M),d)$. This defines by [BL1, Lemma 3.1] a Hilbert complex $$(\mathcal{D},D), \ \textup{with} \ \mathcal{D}^k:=\mathcal{D}(D_k).$$ Put 
\begin{align*}
\triangle^0_{\textup{rel}}:=\Psi_0^{-1}D_0^*D_0\Psi_0, \\
\triangle^2_{\textup{rel}}:=\Psi_2^{-1}D_1D_1^*\Psi_2.
\end{align*}
The Laplacians $\triangle^0_{\textup{rel}}, \triangle^2_{\textup{rel}}$ are spectrally equivalent to $D_0^*D_0, D_1D_1^*$, respectively.
The boundary conditions for $\triangle_{\textup{rel}}^{0}$ and $\triangle_{\textup{rel}}^{2}$ at the cone base $\{1\}\times S^1$ are determined in [BV1, Proposition 4.5].
\\[3mm] In order to identify the boundary conditions for $\triangle_{\textup{rel}}^{0}$ and $\triangle_{\textup{rel}}^{2}$ at the cone singularity, observe that by [BL2, Theorem 3.7] the ideal boundary conditions for the de Rham complex are uniquely determined at the cone singularity. Further [BL2, Lemma 3.1] shows that the corresponding extension coincides with the Friedrich's extension at the cone singularity. We infer from [BS3, Theorem 6.1] that the elements in the domain of the Friedrich's extension are of the asymptotics $O(\sqrt{x})$ as $x\to 0$. Hence we find
\begin{align*}
&\mathcal{D}(\triangle^0_{\textup{rel}})= \\=&\{\phi \in H^2_{loc}((0,R]\times S^1)| \phi(R)=0, \ \phi(x)=O(\sqrt{x}) \ \textup{as} \ x\to 0\},\\
&\mathcal{D}(\triangle^2_{\textup{rel}})= \\= &\{\phi \in H^2_{loc}((0,R]\times S^1)| \phi'(R)-\frac{1}{2R}\phi(R)=0, \ \phi(x)=O(\sqrt{x}) \ \textup{as} \ x\to 0\}.
\end{align*}
The first operator with Dirichlet boundary conditions at the cone base is already elaborated in [S]. We adapt their approach to deal with the second operator with generalized Neumann boundary  conditions at the cone base. The scalar analytic torsion of the bounded generalized cone is then given in terms of both results $$2\log T(M) =\zeta'_{\triangle^2_{\textup{rel}}}(0)-\zeta'_{\triangle^0_{\textup{rel}}}(0).$$
Note that the Laplacian $(-\partial_{\theta}^2)$ on $S^1$ has a discrete spectrum $n^2,n\in \Z$, where the eigenvalues $n^2$ are of multiplicity two, up to the eigenvalue $n^2=0$ of multiplicity one. 
\\[3mm] Consider now a $\mu$-eigenform $\phi$ of $\triangle^2_{\textup{rel}}$. Since eigenforms of $(-\partial^2_{\theta})$ on $S^1$ are smooth, the projection of $\phi$ for any fixed $x\in (0,R]$ onto some $n^2-$eigenspace of $(-\partial^2_{\theta})$ maps again to $H^2_{loc}((0,R]\times S^1)$, still satisfies the boundary conditions for $\dom (\triangle^2_{2,\textup{rel}})$ and hence gives again an eigenform of $\dom (\triangle^2_{2,\textup{rel}})$.
\\[3mm] Hence for the purpose of spectrum computation we can assume without loss of generality the $\mu-$eigenform $\phi$ to lie in a $n^2-$eigenspace of $(-\partial^2_{\theta})$ for any fixed $x\in (0,R]$. This element $\phi$, identified with its scalar part, is a solution to $$-\frac{d^2}{dx^2}\phi(x)+\frac{1}{x^2}\left(\nu^2 n^2-\frac{1}{4}\right)\phi(x)=\mu^2\phi(x),$$ subject to the relative boundary conditions. The general solution to the equation above is $$\phi(x)=c_1\sqrt{x}J_{\nu n}(\mu x) +c_2\sqrt{x}Y_{\nu n}(\mu x),$$ where $J_{\nu n}(z)$ and $Y_{\nu n}(z)$ denote the Bessel functions of first and second kind. The boundary conditions at $x=0$ are given by $\phi(x)=O(\sqrt{x})$ as $x\to 0$ and consequently $c_2=0$. The boundary conditions at the cone base give 
\begin{align*}
\phi'(R)-\frac{1}{2R}\phi(R)= c_1 \mu \sqrt{R}J_{\nu n}'(\mu R)=0.
\end{align*}
Since we are not interested in zero-eigenvalues, the relevant eigenvalues are by Corollary \ref{bessel-zeros} given as follows: $$\lambda_{n,k}=\left(\frac{\widetilde{j}_{\nu n,k}}{R}\right)^2$$ with $\widetilde{j}_{\nu n,k}$ being the positive zeros of $J_{\nu n}'(z)$. We obtain in view of the multiplicities of the $n^2-$eigenvalues of $(-\partial_{\theta}^2)$ on $S^1$ for the zeta-function 
\begin{align*}
\zeta_{\triangle^2_{\textup{rel}}}(s)=\sum_{k=1}^{\infty}\lambda_{0,k}^{-s}+2\!\sum_{n,k=1}^{\infty}\lambda_{n,k}^{-s} =\\ =\sum_{k=1}^{\infty}\left(\frac{\widetilde{j}_{0,k}}{R}\right)^{-2s}\!\!\!\!\!+2R^{2s}\sum_{n,k=1}^{\infty}\widetilde{j}_{\nu n,k}^{-2s}.  
\end{align*}
The derivative at zero for the first summand follows by a direct application of [S, Section 3]: 
\begin{lemma}\label{first-summand}
\begin{align*}
K:=\left.\frac{d}{ds}\right|_{0}\sum_{k=1}^{\infty}\left(\widetilde{j}_{0,k}/R\right)^{-2s}\!=-\frac{1}{2}\log 2\pi -\frac{3}{2}\log R +\log 2.\end{align*}
\end{lemma}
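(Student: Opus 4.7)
The idea is to recognise the sum as the zeta function of a standard Bessel-type operator, rescaled by $R$, and then invoke Spreafico's calculation directly. The key observation is that $J_0'(z)=-J_1(z)$, so the positive zeros $\widetilde{j}_{0,k}$ of $J_0'$ coincide with the positive zeros $j_{1,k}$ of $J_1$. By Corollary \ref{bessel-zeros} applied with $\nu=1$ and $\A=\infty$, the squares $\{j_{1,k}^2\}_{k\in\N}$ are precisely the spectrum of $L_1(\infty)$, the Dirichlet realisation of $l_1=-\partial_x^2+\tfrac{3/4}{x^2}$ on $(0,1]$. Consequently
$$\sum_{k=1}^{\infty}\bigl(\widetilde{j}_{0,k}/R\bigr)^{-2s}=R^{2s}\,\zeta(s;L_1(\infty)),$$
and differentiating at $s=0$ reduces the problem to $K=2(\log R)\,\zeta(0;L_1(\infty))+\zeta'(0;L_1(\infty))$.

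The derivative $\zeta'(0;L_1(\infty))$ follows cleanly from the determinant formula $\det\nolimits_\zeta L_\nu(\infty)=\sqrt{2\pi}/(2^\nu\Gamma(\nu+1))$ recorded in the proof of Proposition \ref{lesch-determinant}: specialising to $\nu=1$ gives $\det_\zeta L_1(\infty)=\sqrt{2\pi}/2$, hence $\zeta'(0;L_1(\infty))=\log 2-\tfrac{1}{2}\log(2\pi)$.

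The value $\zeta(0;L_1(\infty))=-3/4$ is the main obstacle and requires the Spreafico integral representation. Using the product expansion $I_1(z)=\tfrac{z}{2}\prod_k(1+z^2/j_{1,k}^2)$, one writes
$$\zeta(s;L_1(\infty))=\frac{s^2}{\Gamma(s+1)}\int_0^{\infty}t^{s-1}\frac{1}{2\pi i}\int_{\Lambda_c}\frac{e^{-\lambda t}}{-\lambda}\bigl[-\log\bigl(2I_1(\sqrt{-\lambda})/\sqrt{-\lambda}\bigr)\bigr]\,d\lambda\,dt.$$
The large-$|\lambda|$ expansion $I_1(z)\sim e^z/\sqrt{2\pi z}$ then gives
$$-\log\bigl(2I_1(\sqrt{-\lambda})/\sqrt{-\lambda}\bigr)=-\sqrt{-\lambda}+\tfrac{3}{4}\log(-\lambda)+\tfrac{1}{2}\log(2\pi)-\log 2+O\bigl((-\lambda)^{-1/2}\bigr).$$
In Spreafico's framework only the $\log(-\lambda)$-term with coefficient $\tfrac{3}{4}$ feeds into $\zeta(0)$: the $\sqrt{-\lambda}$-term produces a factor proportional to $s$ after integration against the $\Gamma$-function weights; the constant drops out because the contour $\Lambda_c$ does not encircle $\lambda=0$, so $(2\pi i)^{-1}\!\int_{\Lambda_c}e^{-\lambda t}/(-\lambda)\,d\lambda=0$ (compare Lemma \ref{ff}); and the $O((-\lambda)^{-1/2})$ remainder contributes a term of the form $s^2 h(s)/\Gamma(s+1)$ which vanishes with its derivative at $s=0$. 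One arrives at $\zeta(0;L_1(\infty))=-3/4$, and combining the two pieces yields $K=-\tfrac{3}{2}\log R+\log 2-\tfrac{1}{2}\log(2\pi)$ as claimed.
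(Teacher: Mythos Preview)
Your argument is correct. The key identification $J_0'=-J_1$, hence $\widetilde{j}_{0,k}=j_{1,k}$, is exactly what the paper uses. From there the paper simply cites Spreafico's explicit formulas (writing the result as $-B(1)+T(0,1)$ in the notation of [S, Lemma~1(b) and p.~361]) and reads off the value. You instead factor the sum as $R^{2s}\zeta(s;L_1(\infty))$, compute $\zeta'(0;L_1(\infty))$ from the determinant formula in Proposition~\ref{lesch-determinant}, and recover $\zeta(0;L_1(\infty))=-3/4$ by running the Spreafico integral representation for this single operator. The upshot is the same, and both routes ultimately lean on [S] for the value of $\zeta(0)$; the advantage of your version is that the $\zeta'(0)$ piece becomes self-contained within the paper, while the paper's version is a one-line citation. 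Your treatment of the $\sqrt{-\lambda}$ term (absorbed into the $s^2h(s)/\Gamma(s+1)$ remainder) and of the constant term (annihilated because $\Lambda_c$ misses $\lambda=0$) is consistent with the machinery laid out in Section~\ref{an-torsion-sphere}; note also that $P(0)=0$ here since $2I_1(z)/z\to1$ as $z\to0$, which closes the argument for $\zeta(0)=-A=-3/4$.
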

\begin{proof}
The values $\widetilde{j}_{0,k}$ are zeros of $J_0'(z)$. Since $J_0'(z)=-J_1(z)$ they are also zeros of $J_1(z)$. Using [S, Lemma 1 (b)] and its application on [S, p.361] we obtain in the notation therein
\begin{align*}
\left.\frac{d}{ds}\right|_{0}\sum_{k=1}^{\infty}\left(\widetilde{j}_{0,k}/R\right)^{-2s}\!= -B(1)+T(0,1) \\= -\frac{1}{2}\log 2\pi -\frac{3}{2}\log R +\log 2.\end{align*}
\end{proof} \ \\
\\[-8mm] Now we turn to the discussion of the second summand. We put $z(s)=\sum_{n,k=1}^{\infty}\widetilde{j}_{\nu n,k}^{-2s}$ for $Re(s)\gg0$. This series is well-defined for $Re(s)$ sufficiently large by the general result in Theorem \ref{cone-zeta-function}. Due to uniform convergence of integrals and series we obtain with computations similar to \eqref{integral-spreafico} the following integral representation 
\begin{align}\label{int-rep}
z(s)=\frac{s^2}{\Gamma(s+1)}\int_0^{\infty}t^{s-1}\frac{1}{2\pi i}\int_{\wedge_c}\frac{e^{-\lambda t}}{-\lambda}T(s,\lambda)d\lambda dt,
\end{align}
\begin{align}\label{Tt}
T(s,\lambda)=\sum_{n=1}^{\infty}(\nu n)^{-2s}t_n(\lambda), \quad t_n(\lambda)=-\sum_{k=1}^{\infty}\log\left(1-\frac{(\nu n)^2\lambda}{\widetilde{j}_{\nu n,k}^2}\right),
\end{align}
where $\Lambda_c:=\{\lambda \in \C  | |arg(\lambda -c)|=\pi /4\}$ with $c>0$ being any fixed positive number, smaller than the lowest non-zero eigenvalue of $\triangle_{\textup{rel}}^2$. 
\\[3mm] We proceed with explicit calculations by presenting $t_n(\lambda)$ in terms of special functions. Using the infinite product expansion \eqref{prod-Bessel} we obtain the following result for the derivative of the modified Bessel function of first kind:
$$I'_{\nu n}(\nu n z)=\frac{(\nu n z)^{\nu n-1}}{2^{\nu n}\Gamma (\nu n)}\prod_{k=1}^{\infty}\left( 1+\frac{(\nu n z)^2}{\widetilde{j}^2_{\nu n, k}}\right),$$
where $\widetilde{j}_{\nu n, k}$ denotes the positive zeros of $J'_{\nu n}(z)$. Putting $z=\sqrt{-\lambda}$ we get
\begin{align}\nonumber
t_{n}(\lambda)=-\sum_{k=1}^{\infty}\log\left(1-\frac{(\nu n)^2\lambda}{\widetilde{j}_{\nu n,k}^2}\right)= -\log \left[\prod_{k=1}^{\infty}\left(1+\frac{(\nu nz)^2}{\widetilde{j}_{\nu n,k}^2}\right)\right] \\ \label{t-n}= -\log I'_{\nu n}(\nu nz)+\log(\nu nz)^{\nu n-1}-\log 2^{\nu n}\Gamma(\nu n).
\end{align} 
The associated function $T(s,\lambda)$ from \eqref{Tt} is however not analytic at $s=0$. The $1/\nu n$-dependence in $t_{n}(\lambda)$ causes non-analytic behaviour. We put 
\begin{align}\label{p2}
t_{n}(\lambda)=:p_{n}(\lambda)+\frac{1}{\nu n}f(\lambda), \quad P(s,\lambda)=\sum_{n=1}^{\infty}(\nu n)^{-2s}p_{n}(\lambda).
\end{align}
To get explicit expressions for $P(s,\lambda)$ and $f(\lambda)$ we use asymptotic expansion of the Bessel-functions for large order from [O], in analogy to Lemma \ref{f-r}. We obtain in the notation of \eqref{polynom2} with $z=\sqrt{-\lambda}$ and $t=1/\sqrt{1-\lambda}$:
$$f(\lambda)=-M_1(t,0)=\frac{3}{8}t-\frac{7}{24}t^3,$$ where we inferred the explicit form of $M_1(t,0)$ from \eqref{m-1-a}. We obtain for $p_n(\lambda)$
\begin{align}
p_{n}(\lambda)= -\log I'_{\nu n}(\nu nz)+\log(\nu nz)^{\nu n-1}-\log 2^{\nu n}\Gamma(\nu n)-\nonumber \\ -\frac{1}{\nu n}\left(\frac{3}{8}t-\frac{7}{24}t^3\right).\label{p-n}
\end{align}
As in Lemma \ref{ff} we compute the contribution coming from $f(\lambda)$. 
\begin{lemma}\label{f2}
\begin{align*}
\int_0^{\infty}t^{s-1}\frac{1}{2\pi i}\int_{\wedge_c}\frac{e^{-\lambda t}}{-\lambda}f(\lambda)d\lambda dt=\frac{1}{12\sqrt{\pi}}\Gamma\left(s+\frac{1}{2}\right)\left(\frac{1}{s}-7\right).
\end{align*}
\end{lemma}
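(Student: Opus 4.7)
The function $f(\lambda)=\tfrac{3}{8}t-\tfrac{7}{24}t^3$ with $t=(1-\lambda)^{-1/2}$ is simply a linear combination of two powers of $(1-\lambda)$, so my plan is to reuse verbatim the single-term integral identity established in the proof of Lemma \ref{ff}, namely
\begin{align*}
\int_0^{\infty}t^{s-1}\frac{1}{2\pi i}\int_{\wedge_c}\frac{e^{-\lambda t}}{-\lambda}\frac{1}{(1-\lambda)^a}\,d\lambda\,dt = \frac{\Gamma(s+a)}{s\,\Gamma(a)},
\end{align*}
applied with $a=1/2$ and $a=3/2$. Linearity of the double integral and uniform convergence on $\Lambda_c$ (which was already invoked in the proof of Lemma \ref{ff}) allow me to split the computation termwise.

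First I would rewrite $f(\lambda)=\tfrac{3}{8}(1-\lambda)^{-1/2}-\tfrac{7}{24}(1-\lambda)^{-3/2}$, so that the contour-and-Mellin double integral on the left becomes
\begin{align*}
\frac{3}{8}\cdot\frac{\Gamma(s+1/2)}{s\,\Gamma(1/2)} \;-\; \frac{7}{24}\cdot\frac{\Gamma(s+3/2)}{s\,\Gamma(3/2)}.
\end{align*}
Next I would substitute the special values $\Gamma(1/2)=\sqrt{\pi}$, $\Gamma(3/2)=\tfrac{1}{2}\sqrt{\pi}$ and the shift identity $\Gamma(s+3/2)=(s+1/2)\Gamma(s+1/2)$ in order to pull the common factor $\Gamma(s+1/2)/\sqrt{\pi}$ out of both terms.

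What remains is a purely algebraic simplification of the bracketed factor
\begin{align*}
\frac{3}{8s}-\frac{7(s+1/2)}{12s} = \frac{1}{12s}\bigl(9/2 - 7(s+1/2)\cdot\tfrac{1}{1}\bigr) = \frac{1-7s}{12s} = \frac{1}{12}\left(\frac{1}{s}-7\right),
\end{align*}
which I would present after putting everything over the common denominator $24s$. Combining gives precisely the claimed right-hand side $\tfrac{1}{12\sqrt{\pi}}\Gamma(s+1/2)(\tfrac{1}{s}-7)$.

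There is no genuine obstacle here beyond bookkeeping: the only subtle point, already handled in Lemma \ref{ff}, is the justification of the contour deformation and the identification of the basic integral via the substitution $x=\lambda-1$ and the relation $\int_0^\infty t^{s-1}\Gamma(a,t)\,dt=\Gamma(s+a)/s$. Once that lemma is in hand, the present statement is a one-line application of linearity followed by Gamma-function arithmetic, and I would write it up in exactly that order.
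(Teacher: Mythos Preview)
Your proposal is correct and follows essentially the same route as the paper: both reduce the computation to the single-term identity $\int_0^{\infty}t^{s-1}\frac{1}{2\pi i}\int_{\wedge_c}\frac{e^{-\lambda t}}{-\lambda}(1-\lambda)^{-a}\,d\lambda\,dt=\Gamma(s+a)/(s\,\Gamma(a))$ applied with $a=1/2$ and $a=3/2$, followed by the same Gamma-function arithmetic. The only cosmetic difference is that the paper re-derives this identity in place (via the substitution $x=\lambda-1$ and the incomplete Gamma relation) rather than citing Lemma~\ref{ff}, so your version is in fact slightly more economical.
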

\begin{proof}
Observe from [GRA, 8.353.3] by substituting the new variable $x=\lambda-1$
\begin{align*}
\frac{1}{2\pi i}\int_{\wedge_c}\frac{e^{-\lambda t}}{-\lambda}\frac{1}{(1-\lambda)^a}d\lambda= \frac{1}{2\pi i}e^{-t}\int_{\wedge_{c-1}}-\frac{e^{-x t}}{x+1}\frac{1}{(-x)^{a}}dx =\\=\frac{1}{\pi}\sin(\pi a)\Gamma(1-a)\Gamma(a,t).
\end{align*}
Using now the relation between the incomplete Gamma function and the probability integral
$$\int_0^{\infty}t^{s-1}\Gamma(a,t)dt=\frac{\Gamma(s+a)}{s}$$ we finally obtain
\begin{align*}
&\int_0^{\infty}t^{s-1}\frac{1}{2\pi i}\int_{\wedge_c}\frac{e^{-\lambda t}}{-\lambda}f(\lambda)d\lambda dt \\  = &\frac{3}{8\pi}\sin\left(\frac{\pi}{ 2}\right)\Gamma\left(1-\frac{1}{2}\right)\frac{\Gamma\left(s+1/2\right)}{s} - \\ -& \frac{7}{24\pi}\sin \left(\frac{3\pi}{2}\right)\Gamma\left(1-\frac{3}{2}\right)\frac{\Gamma\left(s+3/2\right)}{s}\\  = &\frac{3}{8\sqrt{\pi}}\frac{\Gamma\left(s+1/2\right)}{s}- \frac{7}{12\sqrt{\pi}}\frac{\Gamma\left(s+3/2\right)}{s} = \\ =&
\frac{1}{\sqrt{\pi}}\Gamma\left(s+\frac{1}{2}\right)\left\{\frac{3}{8s}+\frac{7}{12s}\left(s+\frac{1}{2}\right)\right\} =\\  = & \frac{1}{12\sqrt{\pi}}\Gamma\left(s+\frac{1}{2}\right)\left(\frac{1}{s}-7\right) .
\end{align*}
\end{proof} \ \\
\\[-8mm] By classical asymptotics of Bessel functions for large arguments and fixed order $$I'_{\nu n}(\nu n z)=\frac{e^{\nu n z}}{\sqrt{2\pi \nu n z}}\left(1+O\left(\frac{1}{z}\right)\right),$$ where the region of validity is preserved (see the discussion in the higher-dimensional case in Proposition \ref{AB}), we obtain for $p_{n}(\lambda)$ from \eqref{p-n}
\begin{align*}
p_{n}(\lambda)=-\nu n\sqrt{\lambda}+\left(\frac{1}{4}+(\nu n-1)\frac{1}{2}\right)\log(-\lambda)+\frac{1}{2}\log 2\pi \nu n \\ +(\nu n-1)\log \nu n -\log (2^{\nu n}\Gamma (\nu n)) + O((-\lambda)^{-1/2}).
\end{align*}
Following [S, Section 4.2] we reorder the summands in the above expression to get $$p_{n}(\lambda) = -\nu n\sqrt{\lambda} +a_{n} \log (-\lambda) + b_{n} + O((-\lambda)^{-1/2}), $$ where the interesting terms are clear from above. We set
\begin{align*}
A(s):=\sum_{n=1}^{\infty}(\nu n)^{-2s}a_{n}=& \frac{1}{2}\nu^{-2s+1}\zeta_R(2s-1)-\frac{1}{4}\nu^{-2s}\zeta_R(2s), \\
B(s):=\sum_{n=1}^{\infty}(\nu n)^{-2s}b_{n}=& \frac{1}{2}\nu^{-2s}\log \left(\frac{2\pi}{\nu}\right) \zeta_R(2s)+\\+&\nu^{-2s+1}\log \left(\frac{\nu}{2}\right)\zeta_R(2s-1)-\nu^{-2s+1}\zeta_R'(2s-1)+ \\ +&\frac{1}{2}\nu^{-2s}\zeta_R'(2s)-\sum_{n=1}^{\infty}(\nu n)^{-2s}\log \Gamma (\nu n). 
\end{align*}
Following the approach of M. Spreafico it remains to evaluate $P(s,0)$ defined in \eqref{p2} in order to obtain a closed expression for the function $z(s)$. 
\begin{lemma}
\begin{align*}
P(s, 0) = -\frac{1}{12}\nu^{-2s-1} \zeta_R(2s+1).
\end{align*}
\end{lemma}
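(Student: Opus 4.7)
The plan is to show that each of the first three terms in the defining formula
\[
p_{n}(\lambda)= -\log I'_{\nu n}(\nu nz)+\log(\nu nz)^{\nu n-1}-\log 2^{\nu n}\Gamma(\nu n)-\frac{1}{\nu n}\left(\frac{3}{8}t-\frac{7}{24}t^{3}\right)
\]
cancels in the limit $\lambda\to 0$, and that what survives is precisely the subtracted $f(\lambda)/(\nu n)$ evaluated at $t=1$. Specifically, at $\lambda=0$ we have $z=\sqrt{-\lambda}=0$ and $t=(1-\lambda)^{-1/2}=1$, so first I would use the classical small-argument asymptotics
\[
I_{\nu n}(\nu n z)\sim \frac{1}{\Gamma(\nu n+1)}\!\left(\frac{\nu n z}{2}\right)^{\nu n}, \qquad z\to 0,
\]
and differentiate termwise (equivalently, invoke $I'_{\nu n}=(I_{\nu n-1}+I_{\nu n+1})/2$ and keep the leading power) to obtain
\[
I'_{\nu n}(\nu n z)\sim \frac{(\nu n z)^{\nu n -1}}{2^{\nu n}\,\Gamma(\nu n)}, \qquad z\to 0.
\]
Taking logarithms, the first three summands of $p_{n}(\lambda)$ combine to give a quantity that vanishes as $z\to 0$.

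Next I would evaluate the remaining term at $t=1$:
\[
-\frac{1}{\nu n}\left(\frac{3}{8}-\frac{7}{24}\right)=-\frac{1}{12\,\nu n}.
\]
Consequently $p_{n}(0)=-\dfrac{1}{12\,\nu n}$, and summing against the Dirichlet weights yields
\[
P(s,0)=\sum_{n=1}^{\infty}(\nu n)^{-2s}\,p_{n}(0)=-\frac{1}{12}\sum_{n=1}^{\infty}(\nu n)^{-2s-1}=-\frac{1}{12}\,\nu^{-2s-1}\,\zeta_{R}(2s+1),
\]
which is the claimed identity.

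The only subtle point is justifying that the formal cancellation of the Bessel-type terms is valid: the difference $-\log I'_{\nu n}(\nu n z)+\log(\nu n z)^{\nu n-1}-\log 2^{\nu n}\Gamma(\nu n)$ must be shown to vanish (not merely stay bounded) as $z\to 0$, which follows because the small-argument asymptotic above is actually an analytic expansion $I'_{\nu n}(\nu n z)=\tfrac{(\nu n z)^{\nu n-1}}{2^{\nu n}\Gamma(\nu n)}\bigl(1+O(z^{2})\bigr)$. I expect that to be essentially the only place where one must be careful, the rest being elementary once the limits $z\to 0$ and $t\to 1$ are taken term by term.
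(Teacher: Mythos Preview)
Your proposal is correct and follows essentially the same route as the paper. The paper uses the same small-argument asymptotics for $I'_{\nu n}$, writes the intermediate cancellation as $-\log(\nu n)+\log\Gamma(\nu n+1)-\log\Gamma(\nu n)=0$, evaluates the subtracted polynomial at $t=1$ to get $-\tfrac{1}{12\nu n}$, and sums exactly as you do.
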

\begin{proof}
Recall the asymptotic behaviour of Bessel functions of second order for small arguments
$$I_{\nu n}(x)\sim \frac{1}{\Gamma (\nu n+1)}\left( \frac{x}{2} \right)^{\nu n} \ \Rightarrow I'_{\nu n}(x)\sim \frac{\nu n}{2\Gamma (\nu n+1)}\left( \frac{x}{2} \right)^{\nu n-1}.$$ Further observe that as $\lambda \to 0$ we obtain with $z=\sqrt{-\lambda}$ and $t=1/\sqrt{1+z^2}$ $$M_1(t,0)= -\frac{3}{8}t+\frac{7}{24}t^3 \xrightarrow{\lambda \to 0} -\frac{3}{8}+\frac{7}{24}=-\frac{1}{12}.$$ Using these two facts we obtain from \eqref{p-n} for $p_{n}(0)$ 
\begin{align*}
p_{n}(0) = -\log \nu n + \log \Gamma (\nu n+1) -\log \Gamma (\nu n) -\frac{1}{12 \nu n} =-\frac{1}{12 \nu n}\\
\Rightarrow P(s, 0) = \sum_{n=1}^{\infty}(\nu n)^{-2s}p_{n}(0)=-\frac{1}{12}\nu^{-2s-1} \zeta_R(2s+1).
\end{align*}
\end{proof} \ \\
\\[-8mm] Now we have all the ingredients together, since by [S, p. 366] and Lemma \ref{f2} the function $z(s)$ is given as follows:
\begin{align*}
&z(s) = \frac{s}{\Gamma (s+1)}[\gamma A(s)-B(s)-\frac{1}{s}A(s)+P(s,0)]\ + \\ &+ \  \frac{s^2}{\Gamma(s+1)}\nu^{-2s-1}\zeta_R(2s+1)\frac{1}{12\sqrt{\pi}}\Gamma\left(s+\frac{1}{2}\right)\left(\frac{1}{s}-7\right) + \frac{s^2}{\Gamma (s+1)}h(s),
\end{align*}
where the last term vanishes with its derivative at $s=0$. We are interested in the value of the function itself $z(0)$ and its derivative $z'(0)$. In order to compute the value of $z(0)$ recall the fact that close to $1$ the Riemann zeta function behaves as follows $$\zeta_R(2s+1)=\frac{1}{2s}+\gamma +o(s), \quad s\to 0.$$ This implies
$$\frac{s^2}{\Gamma(s+1)}\nu^{-2s-1}\zeta_R(2s+1)\frac{1}{12\sqrt{\pi}}\Gamma\left(s+\frac{1}{2}\right)\left(\frac{1}{s}-7\right)\rightarrow \frac{1}{24\nu}, s\to 0.$$
Furthermore note that the function
$$\eta(s,\nu):=\sum_{n=1}^{\infty}(\nu n)^{-2s}\log \Gamma (\nu n +1)-\frac{1}{12}\nu^{-2s-1}\zeta_R(2s+1),$$ introduced in [S, p.366] is regular at $s=0$, cf. [S, Section 4.3]. Hence $\gamma A(s)-B(s)+P(s,0)$ is regular at $s=0$ and we obtain straightforwardly:
\begin{align*}
z(0)=-A(0)+\frac{1}{24\nu}= -\frac{1}{2}\nu \zeta_R(-1)+\frac{1}{4}\zeta_R(0)+\frac{1}{24\nu}.
\end{align*}
In view of the explict values $\zeta_R(-1)=-\frac{1}{12}$ and $\zeta_R(0)=-\frac{1}{2}$ we find 
\begin{align}\label{z(0)}
z(0)=\frac{\nu}{24}+\frac{1}{24\nu}-\frac{1}{8}.
\end{align}
\begin{lemma}\label{dz}
\begin{align*}
z'(0)=\eta(0,\nu)+\frac{1}{2}\log \nu -\frac{1}{4}\log 2\pi -\frac{1}{12}\nu\log 2 + \frac{1}{12\nu}(\gamma -\log 2\nu -\frac{7}{2}),
\end{align*}
where $\eta(s,\nu)=\sum_{n=1}^{\infty}(\nu n)^{-2s}\log \Gamma (\nu n+1)-\frac{1}{12}\nu^{-2s-1}\zeta_R(2s+1)$.
\end{lemma}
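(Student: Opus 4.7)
The plan is to differentiate the explicit closed form
\begin{align*}
z(s) = \frac{s}{\Gamma(s+1)}\!\left[\gamma A(s) - B(s) - \frac{A(s)}{s} + P(s,0)\right] + M(s) + \frac{s^2 h(s)}{\Gamma(s+1)}
\end{align*}
derived immediately above the lemma, where $M(s) := \tfrac{s^2 \nu^{-2s-1}\zeta_R(2s+1)\Gamma(s+1/2)(1/s-7)}{12\sqrt{\pi}\,\Gamma(s+1)}$ and the last summand vanishes together with its derivative at $s=0$. The computation splits naturally into the contribution from the bracket and the contribution $M'(0)$ from the middle term.

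The first step is to untangle the pole of $\zeta_R(2s+1)$ which appears in $P(s,0)$ as well as, less visibly, in $B(s)$. Applying $\log\Gamma(\nu n) = \log\Gamma(\nu n+1) - \log(\nu n)$ to the Barnes sum in $B(s)$ and invoking the definition of $\eta(s,\nu)$, one sees that $B(s)$ contains a singular summand $-\tfrac{1}{12}\nu^{-2s-1}\zeta_R(2s+1)$. Since $P(s,0)$ equals exactly this same expression, the singular parts cancel in $-B(s)+P(s,0)$, which is therefore regular at $s=0$. Moreover $-A(s)/s$ combined with the prefactor $s/\Gamma(s+1)$ becomes $-A(s)/\Gamma(s+1)$, also regular. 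Writing the entire bracket contribution as $G(s)/\Gamma(s+1)$ with $G(s) := -A(s) + s\gamma A(s) + s[-B(s)+P(s,0)]$, the derivative at $s=0$ equals $G'(0)+\gamma G(0) = -A'(0) + [-B+P](0)$, after the $\gamma A(0)$ terms cancel. I would then evaluate $A'(0)$ and $[-B+P](0)$ directly, using $\zeta_R(-1) = -\tfrac{1}{12}$, $\zeta_R(0) = -\tfrac{1}{2}$ and $\zeta_R'(0) = -\tfrac{1}{2}\log 2\pi$; the contributions proportional to $\nu\zeta_R'(-1)$ appear with opposite signs in $A'(0)$ and $[-B+P](0)$ and cancel, leaving the partial result $\eta(0,\nu) + \tfrac{1}{2}\log\nu - \tfrac{1}{4}\log 2\pi - \tfrac{1}{12}\nu\log 2$.

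For the middle term, write $M(s) = s^2 N(s)/\Gamma(s+1)$ where $N(s) = a_{-2}/s^2 + a_{-1}/s + O(1)$ has a double pole from the product $\zeta_R(2s+1)\cdot(1/s)$. Then $M(0) = a_{-2}$, which reproduces the value $1/(24\nu)$ already employed in $z(0)$, and $M'(0) = a_{-1} + \gamma a_{-2}$. To extract $a_{-1}$ I expand $\nu^{-2s-1}$, $\zeta_R(2s+1) = (2s)^{-1}+\gamma+O(s)$ and $\Gamma(s+1/2) = \sqrt{\pi}[1 + \tfrac{\Gamma'(1/2)}{\Gamma(1/2)} s + O(s^2)]$ using the standard value $\Gamma'(1/2)/\Gamma(1/2) = -\gamma - 2\log 2$ already exploited in Corollary~\ref{two-dim}, then multiply by $1/s - 7$ and isolate the coefficient of $1/s$ to obtain $M'(0) = \tfrac{1}{12\nu}(\gamma - \log 2\nu - \tfrac{7}{2})$. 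Adding this to the contribution of the previous paragraph gives the claimed formula for $z'(0)$. The main obstacle is precisely this last bookkeeping: each factor in the triple product $\nu^{-2s-1}\zeta_R(2s+1)\Gamma(s+1/2)$ must be expanded to order $s$ before convolving with $1/s - 7$, but the calculation is mechanical once the conceptual cancellation of the first step has been set up.
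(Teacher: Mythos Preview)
Your proposal is correct and follows essentially the same route as the paper's proof: differentiate the closed form for $z(s)$, split into the bracketed piece and the middle term, use $\Gamma'(1/2)=-\sqrt{\pi}(\gamma+2\log 2)$ to obtain $M'(0)=\tfrac{1}{12\nu}(\gamma-\log 2\nu-\tfrac{7}{2})$, and then absorb the $\zeta_R(2s+1)$-singularities from $B(s)$ and $P(s,0)$ into $\eta(s,\nu)$. Your write-up is in fact more explicit than the paper's---where the paper simply records the intermediate formula $z'(0)=P(0,0)-A'(0)-B(0)+\tfrac{1}{12\nu}(\gamma-\log 2\nu-\tfrac{7}{2})$ and defers to [S] for $\eta$, you spell out the pole cancellation and the $\nu\zeta_R'(-1)$-cancellation directly.
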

\begin{proof}
We compute $z'(0)$ from the above expression for $z(s)$, using $$\Gamma'(1/2)=-\sqrt{\pi}(\gamma +2\log 2).$$ Straightforward computations lead to:
\begin{align}
z'(0)=P(0,0)-A'(0)-B(0)+\frac{1}{12\nu}(\gamma -\log 2\nu -\frac{7}{2}).
\end{align}
The statement follows with $\eta(s,\nu)$ being defined precisely as in [S, Section 4.2].
\end{proof} \ \\
\\[-8mm] Now we are able to provide a result for the derivative of the zeta function $\zeta'_{\triangle^2_{\textup{rel}}}(0)$. Recall $$\zeta_{\triangle^2_{\textup{rel}}}(s)=\sum_{k=1}^{\infty}\left(\frac{\widetilde{j}_{0,k}}{R}\right)^{-2s}\!\!\!\!\!+2R^{2s}\sum_{\nu n,k=1}^{\infty}\widetilde{j}_{\nu n,k}^{-2s}. $$ With $K$ defined in Lemma \ref{first-summand} and $z(s)=\sum_{n,k=1}^{\infty}\widetilde{j}_{\nu n,k}^{-2s}$ we get 
\begin{align*}
\zeta'_{\triangle^2_{\textup{rel}}}(0)= K+4z(0)\log R +2z'(0).
\end{align*}
It remains to compare each summand to the corresponding results for $\zeta'_{\triangle^0_{\textup{rel}}}(0)$ obtained in [S]. Using Lemma \ref{first-summand}, \eqref{z(0)} and \eqref{dz} we finally arrive after several cancellations at Theorem \ref{main} $$2\log T(M) =\zeta'_{\triangle^2_{\textup{rel}}}(0)-\zeta'_{\triangle^0_{\textup{rel}}}(0)=-\log(\pi R^2) +\log \nu -\frac{1}{\nu}.$$ 

%******************************************
\section{References}\
%******************************************
\\[-1mm] [AS] Editors M. Abramowitz, I.A. Stegun \emph{"Handbook of math. Functions"} AMS. 55.
\\[3mm] [BGKE] M. Bordag, B. Geyer, K. Kirsten, E. Elizalde \emph{"Zeta function determinant of the Laplace Operator on the D-dimensional ball"}, Comm. Math. Phys. 179, no. 1, 215-234, (1996)
\\[3mm] [BKD] M. Bordag, K. Kirsten, J.S. Dowker \emph{"Heat-kernels and functional determinants on the generalized cone"}, Comm. Math. Phys. 182, 371-394, (1996)
\\[3mm] [BL1] J. Br\"{u}ning, M. Lesch \emph{"Hilbert complexes"}, J. Funct. Anal. 108, 88-132 (1992)
\\[3mm] [BL2] J. Br\"{u}ning, M. Lesch \emph{"K\"{a}hler-Hodge Theory for conformal complex cones"}, Geom. Funct. Anal. 3, 439-473 (1993)
\\[3mm] [BM] J. Br\"{u}ning, X. Ma \emph{"An anomaly-formula for Ray-Singer metrics on manifolds with boundary"}, Geom. Funct. An. 16, No. 4, 767-837, (2006)
\\[3mm] [Br] J. Br\"{u}ning \emph{$L^2$-index theorems for certain complete manifolds}, J. Diff. Geom. 32 491-532 (1990)
\\[3mm] [BS] J. Br\"{u}ning, R. Seeley \emph{"An index theorem for first order regular singular operators"}, Amer. J. Math 110, 659-714, (1988)
\\[3mm] [BS2] J. Br\"{u}ning, R. Seeley \emph{"Regular Singular Asymptotics"}, Adv. Math. 58, 133-148 (1985)
\\[3mm] [BS3] J. Br\"{u}ning, R. Seeley \emph{"The resolvent expansion for second order Regular Singular Operators"}, J. Funct. Anal. 73, 369-429 (1987)
\\[3mm] [BV1] B. Vertman \emph{"Functional Determinants for Regular-Singular Laplace-type Operators"}, preprint, arXiv:0808.0443 (2008)
\\[3mm] [BZ] J. -M. Bismut and W. Zhang \emph{"Milnor and Ray-Singer metrics on the equivariant determinant of a flat vector bundle"}, Geom. and Funct. Analysis 4, No.2, 136-212 (1994)
\\[3mm] [BZ1] J. -M. Bismut and W. Zhang \emph{"An extension of a Theorem by Cheeger and M\"{u}ller"}, Asterisque, 205, SMF, Paris (1992)
\\[3mm] [C] C. Callias \emph{"The resolvent and the heat kernel for some singular boundary problems"}, Comm. Part. Diff. Eq. 13, no. 9, 1113-1155 (1988)
\\[3mm] [Ch] J. Cheeger \emph{"Analytic Torsion and Reidemeister Torsion"}, Proc. Nat. Acad. Sci. USA 74 (1977), 2651-2654
\\[3mm] [Ch1] J. Cheeger \emph{On the spectral geometry of spaces with conical singularities}, Proc. Nat. Acad. Sci. 76, 2103-2106 (1979)
\\[3mm] [Ch2] J. Cheeger \emph{"Spectral Geometry of singular Riemannian spaces"}, J. Diff. Geom. 18, 575-657 (1983)
\\[3mm] [Dar] A. Dar \emph{"Intersection R-torsion and analytic torsion for pseudo-manifolds"} Math. Z. 194, 193-216 (1987)
\\[3mm] [DK] J.S. Dowker and K. Kisten \emph{"Spinors and forms on the ball and the generalized cone"}, Comm. Anal. Geom. Volume 7, Number 3, 641-679, (1999)
\\[3mm] [DK1] J.S. Dowker and K. Kisten \emph{"Spinors and forms on the ball and the generalized cone"}, Comm. Anal. Geom. Volume 7, Number 3, 641-679, (1999)
\\[3mm] [Do] J. Dodziuk \emph{"Finite-difference approach to the Hodge theory of harmonic forms"}, Amer. J. Math. 98, no. 1, 79-104, (1976)
\\[3mm] [Fr] W. Franz \emph{"\"{U}ber die Torsion einer \"{U}berdeckung"}, J. reine angew. Math. 173, 245-254 (1935)
\\[3mm] [GRA] I.S: Gradsteyn, I.M Ryzhik, Alan Jeffrey \emph{"Table of integrals, Series and Products"}, 5th edition, Academic Press, Inc. (1994)
\\[3mm] [H] S. W. Hawking \emph{"Zeta-function regularization of path integrals in curved space time"}, CM 55, 133-148 (1977)
\\[3mm] [L] M. Lesch \emph{"Determinants of regular singular Sturm-Liouville operators"}, Math. Nachr. (1995)
\\[3mm] [L1] M. Lesch \emph{"Operators of Fuchs Type, Conical singularities, and Asymptotic Methods"}, Teubner, Band 136 (1997)
\\[3mm] [L3] M. Lesch \emph{"The analytic torsion of the model cone"}, Columbus University (1994), unpublished notes.
\\[3mm] [LR] J. Lott, M. Rothenberg \emph{"Analytic torsion for group actions"} J. Diff. Geom. 34, 431-481 (1991)
\\[3mm] [L\"{u}] W. L\"{u}ck \emph{"Analytic and topological torsion for manifolds with boundary and symmetry"}, J. Diff. Geom. 37, 263-322, (1993)
\\[3mm] [M] E. Mooers \emph{"Heat kernel asymptotics on manifolds with conic singularities"} J. Anal. Math. 78, 1-36 (1999) 
\\[3mm] [Mi] J. Milnor \emph{"Whitehead torsion"}, Bull. Ams. 72, 358-426 (1966)
\\[3mm] [Mu] W. M\"{u}ller \emph{"Analytic torsion and R-torsion for unimodular representations"} J. Amer. Math. Soc., Volume 6, Number 3, 721-753 (1993) 
\\[3mm] [Mu1] W. M\"{u}ller \emph{"Analytic Torsion and R-Torsion of Riemannian manifolds"} Adv. Math. 28, 233-305 (1978)
\\[3mm] [Nic] L.I. Nicolaescu \emph{"The Reidemeister torsion of 3-manifolds"}, de Gruyter Studies in Mathematics, vol. 30, Berlin (2003)
\\[3mm] [O] F.W. Olver \emph{"Asymptotics and special functions"} AKP Classics (1987)
\\[3mm] [P] L. Paquet \emph{"Probl'emes mixtes pour le syst'eme de Maxwell"}, Annales Facultè des Sciences Toulouse, Volume IV, 103-141 (1982)
\\[3mm] [Re1] K. Reidemeister \emph{"Die Klassifikation der Linsenr\"{a}ume"}, Abhandl. Math. Sem. Hamburg 11, 102-109 (1935)
\\[3mm] [Re2] K. Reidemeister \emph{"\"{U}berdeckungen von Komplexen"}, J. reine angew. Math. 173, 164-173 (1935)
\\[3mm] [Rh] G. de Rham \emph{"Complexes a automorphismes et homeomorphie differentiable"}, Ann. Inst. Fourier 2, 51-67 (1950)
\\[3mm] [RS] D.B. Ray and I.M. Singer \emph{"R-Torsion and the Laplacian on Riemannian manifolds"}, Adv. Math. 7, 145-210 (1971)
\\[3mm] [S] M. Spreafico \emph{"Zeta function and regularized determinant on a disc and on a cone"}, J. Geom. Phys. 54 355-371 (2005)
\\[3mm] [V] S. Vishik \emph{"Generalized Ray-Singer Conjecture I. A manifold with smooth boundary"}, Comm. Math. Phys. 167, 1-102 (1995)
\\[3mm] [W] J. Weidmann \emph{"Spectral theory of ordinary differential equations"}, Springer-Verlag Berlin-Heidelberg, Lecture Notes in Math. 1258, (1987)
\\[3mm] [W2] J. Weidmann \emph{"Linear Operators in Hilbert spaces"}, Springer-Verlag, New York, (1980)
\\[3mm] [Wh] J. H. Whitehead \emph{"Simple homotopy types"}, Amer. J. Math. 72, 1-57 (1950)
\\[3mm] [WT] G.N. Watson \emph{"A treatise on the theory of Bessel functions"}, Camb. Univ. Press (1922)

\end{document}